\documentclass{amsart}

\usepackage{amssymb, amsfonts, amsthm, amsmath, latexsym}
\usepackage[all]{xy}

\def\bt{\begin{thm}}
\def\et{\end{thm}}
\def\bl{\begin{lem}}
\def\el{\end{lem}}
\def\bd{\begin{defi}}
\def\ed{\end{defi}}
\def\bc{\begin{cor}}
\def\ec{\end{cor}}
\def\bp{\begin{proof}}
\def\ep{\end{proof}}
\def\br{\begin{rem}}
\def\er{\end{rem}}

\newtheorem{thm}{Theorem}[section]
\newtheorem{prop}[thm]{Proposition}
\newtheorem{lem}[thm]{Lemma}
\newtheorem{defn}[thm]{Definition}
\newtheorem{example}[thm]{Example}
\newtheorem{rem}[thm]{Remark}
\newtheorem{cor}[thm]{Corollary}

\numberwithin{equation}{section}

\newcommand{\fraction}[2]{\frac{\textstyle #1}{\textstyle #2}}
\newcommand{\vmin}{v_{\alpha}^{min}}
\newcommand{\Tmin}{T_{\alpha}^{min}}
\newcommand{\cohomology}{H^{1,1}(X,\mathbb{R})}
\newcommand{\psef}{H^{1,1}_{psef}(X,\mathbb{R})}
\newcommand{\nef}{H^{1,1}_{nef}(X,\mathbb{R})}
\newcommand{\bigc}{H^{1,1}_{big}(X,\mathbb{R})}

\newcommand{\inter}{\alpha \cdot C}

\newcommand{\bthm}{\begin{thm}}
\newcommand{\ethm}{\end{thm}}
\newcommand{\bstp}{\begin{stp}}
\newcommand{\estp}{\end{stp}}
\newcommand{\blemma}{\begin{lemma}}
\newcommand{\elemma}{\end{lemma}}
\newcommand{\bprop}{\begin{prop}}
\newcommand{\eprop}{\end{prop}}
\newcommand{\bpf}{\begin{pf}}
\newcommand{\epf}{\end{pf}}
\newcommand{\bdefn}{\begin{defn}}
\newcommand{\edefn}{\end{defn}}
\newcommand{\brk}{\begin{rmrk}}
\newcommand{\erk}{\end{rmrk}}
\newcommand{\bcrl}{\begin{crl}}
\newcommand{\ecrl}{\end{crl}}

\title{Green Currents for Meromorphic Maps of Compact K\"ahler Manifolds}
\author{Turgay Bayraktar}
\date{February 9, 2012}
\address{Mathematics Department, Indiana University 47405 Indiana, USA}
\email{tbayrakt@indiana.edu}

\begin{document}

\maketitle
%\begin{abstract}
% We study the invariance properties of positive cones under the pull back by meromorphic maps of compact K\"ahler manifolds. We also provide necessary and sufficient conditions for existence of Green currents in codimension one.
%\end{abstract}

%\tableofcontents
\section{Introduction}

Let $X$ be a compact K\"ahler manifold and $f:X\dashrightarrow X$ be a dominant meromorphic map.  It is known that we may define a linear pullback map $f^*:H^{1,1}(X,\mathbb{R}) \rightarrow H^{1,1}(X,\mathbb{R}).$ However, in general this linear action is not compatible with the dynamics of the map $f$. We say that $f$ is 1-regular  whenever $(f^n)^*=(f^*)^n$ for $n=1,2,\dots$ on $\cohomology.$ In the sequel we will assume that $f$ is 1-regular. By a standard Perron-Frobenius type argument there exists $\alpha \in  \psef$ such that  
\begin{equation*}
f^*\alpha=\lambda_1(f) \alpha
\end{equation*}
 where $\lambda_1(f)$ is defined to be the spectral radius of $f^*.$ Let 
 $$H:=\{\alpha \in H^{1,1}(X,\mathbb{R}):f^*\alpha=\lambda_1(f) \alpha\}$$
 we also consider
 $$H_\mathcal{N}:=\{\alpha\in  H^{1,1}(X,\mathbb{R}): \alpha=\lim_{N\rightarrow \infty}\frac{1}{N}\sum_{n=1}^N \frac{1}{n^{m-1}\lambda_1^n}(f^n)^*\beta \ \text{for some}\ \beta \in \nef\}$$
 where $m$ denotes %the index of the spectral radius $\lambda_1(f)$ that is 
 the size of the largest Jordan block associated to $\lambda_1(f).$
 Then it follows that $H_\mathcal{N}\subset H \cap \psef$ and  $H_{\mathcal{N}}$ has a non-empty interior in $H.$ 
 \\ \indent 
 In general, a class $\alpha \in H_\mathcal{N}$ is not numerically effective (nef). Boucksom \cite{Bou} has defined the minimal multiplicity $\nu(\alpha,x)$ of a class $\alpha \in \psef$ at a point $x\in X.$ This is a local obstruction to the numerical effectiveness of $\alpha \in \psef$ at $x$. The set 
$$E_{nn}(\alpha):=\{x\in X: \nu(\alpha,x)>0\}$$
 is called the non-nef locus of $\alpha.$ A property of $E_{nn}(\alpha)$ is that if $C\subset X$ is an irreducible algebraic curve such that $\alpha \cdot C<0$ then $C\subset E_{nn}(\alpha).$ We let $I_{f^k}$ denote the indeterminacy locus of the iterate $f^k$.     
\begin{thm}
Let $f:X\dashrightarrow X$ be a dominant meromorphic map. If $f$ is 1-regular and $\lambda_1(f)>1$ then $\displaystyle E_{nn}(\alpha) \subset  \bigcup_{k=1}^{\infty} I_{f^k}$ for every $\alpha\in H_\mathcal{N}.$
\end{thm}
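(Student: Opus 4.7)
The plan is to prove the contrapositive: for every $x \in X \setminus \bigcup_{k\geq 1} I_{f^k}$, I will show that $\nu(\alpha, x) = 0$, whence $x \notin E_{nn}(\alpha)$. The approach, consistent with the paper's title, is to construct a \emph{Green current} $T_\alpha$ representing $\alpha$ whose Lelong numbers vanish on $X \setminus \bigcup_k I_{f^k}$. Since Boucksom's minimal multiplicity is an infimum of Lelong numbers over closed positive $(1,1)$-currents in $\alpha$, the existence of such a $T_\alpha$ immediately gives $\nu(\alpha, x) \leq \nu(T_\alpha, x) = 0$.

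To build $T_\alpha$, fix a smooth closed $(1,1)$-form $\omega_\beta$ representing $\beta$ with $\omega_\beta \geq -\epsilon \omega$ (possible because $\beta \in \nef$; we will let $\epsilon \to 0$ in the end). For each $n$, the pullback $(f^n)^*\omega_\beta$ is smooth on $X \setminus I_{f^n}$, and we define it as a closed current on $X$ by trivial extension across the codimension-$\geq 2$ set $I_{f^n}$. Form the Ces\`aro averages
\[
T_N := \frac{1}{N} \sum_{n=1}^N \frac{1}{n^{m-1} \lambda_1^n} (f^n)^* \omega_\beta,
\]
whose cohomology classes converge to $\alpha$ by the definition of $H_{\mathcal{N}}$. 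Their total masses are uniformly bounded (because $\|(f^n)^*\beta\|$ grows like $n^{m-1}\lambda_1^n$, by 1-regularity together with the Jordan-block analysis of $f^*$), so weak compactness of positive closed currents, followed by $\epsilon \to 0$, produces a closed positive current $T_\alpha \in \alpha$.

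The core of the proof is to show $\nu(T_\alpha, x) = 0$ for every $x \in X \setminus \bigcup_k I_{f^k}$. Observe that each individual $T_N$ is smooth in an (a priori $N$-dependent) neighborhood of $x$, since $T_N$ only involves the finitely many indeterminacy sets $I_{f^1}, \ldots, I_{f^N}$, none of which contain $x$; hence $\nu(T_N, x) = 0$. However, Lelong numbers are only upper semicontinuous under weak convergence of currents and can jump up in the limit (as in $dd^c \log|z - 1/N| \rightharpoonup dd^c \log|z|$), so the vanishing of $\nu(T_\alpha, x)$ does not follow automatically. The natural way to rule out such a jump is to work with local potentials: on a chart around $x$ write $T_N = \omega_N + dd^c u_N$ with quasi-psh $u_N$, and establish a uniform $L^\infty$-bound on the $u_N$ on a fixed neighborhood of $x$. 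Such a bound forces the limit potential $u$ to be bounded near $x$, and a bounded $\omega$-psh function has vanishing Lelong number.

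The main obstacle is precisely this uniform $L^\infty$-bound. Although the local potential $\phi_\beta$ of $\omega_\beta$ is smooth and bounded on $X$, the composition $\phi_\beta \circ f^n$ is defined only on $X \setminus I_{f^n}$, and the distances $\mathrm{dist}(x, I_{f^n})$ admit no a priori uniform lower bound even for $x \notin \bigcup_k I_{f^k}$. Controlling the contribution of each term $\frac{1}{n^{m-1}\lambda_1^n}(\phi_\beta \circ f^n)$ on a fixed ball around $x$ requires a careful dynamical/volume argument in which the damping factor $\lambda_1^{-n}$ and 1-regularity are used to absorb the potential growth of $\phi_\beta \circ f^n$ near $I_{f^n}$. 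Carrying out this local estimate, together with the passage $\epsilon \to 0$, is the principal technical work underlying the theorem and is what earns the current $T_\alpha$ the name ``Green current'' in the title.
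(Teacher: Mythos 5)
Your proposal never closes the argument: it concludes by flagging the uniform $L^\infty$ bound on the potentials $u_N$ near $x$ as ``the principal technical work underlying the theorem,'' but never supplies it. Worse, that missing estimate is essentially equivalent to condition $(\star)$ of Theorem~\ref{B}, which is the hypothesis of a \emph{separate, strictly stronger} theorem later in the paper and is not known (or even claimed) to hold for every $\alpha \in H_\mathcal{N}$; the paper devotes Section~5 to establishing $(\star)$ under extra algebraic hypotheses. In other words, you have reduced Theorem~1.1 to a harder open sub-problem, not proved it. You also never invoke part~(1) of Theorem~\ref{nonnef}, which is the mechanism the paper uses to produce the needed vanishing.

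The paper's actual proof does not construct any current at all; it works entirely with Boucksom's minimal multiplicity $\nu(\alpha,x)$ at the level of cohomology classes. The two properties that make the argument short are precisely the ones your proposal lacks: $\alpha \mapsto \nu(\alpha,x)$ is \emph{sub-additive} and \emph{lower semi-continuous} on $\psef$ (Proposition~\ref{Bou}(ii),(iii)), so passing to the Ces\`aro limit $\alpha = \lim_N \Lambda_N\beta$ gives $\nu(\alpha,p) \leq \liminf_N \frac{1}{N}\sum_{n=1}^N \frac{1}{n^{m-1}\lambda^n}\nu((f^n)^*\beta,p)$ directly, with no worry about Lelong numbers jumping \emph{up} in a weak limit (the phenomenon you correctly fear for currents is irrelevant here because the inequality goes the opposite direction for classes). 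Then part~(1), i.e.\ $\nu(f^*\alpha,p) \leq C_f\,\nu(\alpha,f(p))$ for $p \notin I_f$, applied to $f^n$ with $\beta$ nef (hence $\nu(\beta,\cdot)\equiv 0$), yields $\nu((f^n)^*\beta,p)=0$ for $p\notin I_{f^n}$, and the conclusion follows for every $p\notin\mathcal{I}_1$. So the correct path bypasses potential estimates entirely; the structural tool you needed was the lower semi-continuity of the minimal-multiplicity function, not upper semi-continuity of Lelong numbers.
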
  
\noindent As a corollary we obtain that every curve $C$ such that $\inter <0$ is a subset of $\displaystyle\bigcup_{k=1}^{\infty} I_{f^k}$. Moreover, the non-nef locus $E_{nn}(\alpha)$ does not contain any hypersurface of ~$X$. \\ \indent
 Many authors have constructed positive closed invariant currents to represent the invariant classes. These constructions, however, assume that the class is nef or sometimes even K\"ahler. Here we consider some cases where the invariant class is merely psef.\\ \indent
  Let us fix a smooth representative $\theta \in \alpha.$ We say that an upper semi-continuous function $\phi \in L^1(X)$ is a $\theta$-psh function if $\theta+dd^c\phi\geq 0$ in the sense of currents. Following \cite{DPS} we define
 $$v_{\alpha}^{min}:=sup\{\phi \leq 0: \phi\ \text{is}\ \theta\text{-psh function} \}.$$ 
Thus, $\theta +dd^cv_{\alpha}^{min} \in \alpha$ is a positive closed $(1,1)$ current with minimal singularities. 
 \begin{thm} \label{B}
Let $f:X\dashrightarrow X$ be a 1-regular dominant meromorphic map and $\alpha \in H^{1,1}_{psef}(X,\mathbb{R})$ such that $f^*\alpha =\lambda \alpha$ for some $\lambda > 1$. If 
\begin{equation*} 
\frac{1}{\lambda^{n}} v_{\alpha}^{min} \circ  f^{n} \rightarrow  0 \ \text{in} \ L^1(X) \tag{$\star$}
\end{equation*}
then  for every smooth form $\theta \in \alpha$ we have the existence of the limit $$T_{\alpha}:=\lim_{n\rightarrow \infty}\frac{1}{\lambda^n}(f^{n})^*\theta$$ which depends only on the class $\alpha$. $T_{\alpha}$ is a positive closed $(1,1)$ current satisfying $f^*T_{\alpha}= \lambda T_{\alpha}$. Furthermore, 
\begin{itemize}
\item[(1)] $T_{\alpha}$ is minimally singular among the invariant currents which belong to the class $\alpha$.
\item[(2)] $T_{\alpha}$ is extreme within the cone of positive closed (1,1) currents whose cohomology class belongs to $\mathbb{R}^+\alpha$.
\end{itemize}
 \end{thm}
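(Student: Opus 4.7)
The plan is to reduce the convergence of $\frac{1}{\lambda^n}(f^n)^*\theta$ to that of $T_n:=\frac{1}{\lambda^n}(f^n)^*T_{\min}$, where $T_{\min}:=\theta+dd^c\vmin$. Since
\[
\frac{1}{\lambda^n}(f^n)^*\theta - \frac{1}{\lambda^n}(f^n)^*T_{\min} = -dd^c\!\left(\frac{\vmin\circ f^n}{\lambda^n}\right),
\]
hypothesis $(\star)$ sends the right-hand side to zero in the weak topology, so it suffices to control $T_n$. Independence on the smooth representative follows from the same principle: two smooth representatives of $\alpha$ differ by $dd^c g$ with $g$ bounded, so $g\circ f^n/\lambda^n$ tends to zero uniformly since $\lambda>1$.

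Next I would extract a subsequential weak limit. The $T_n$ are positive closed in the fixed class $\alpha$, so after writing $T_n=\theta+dd^c\phi_n$ with $\phi_n$ a $\theta$-psh potential normalized by $\sup\phi_n=0$, standard $L^1$-compactness of quasi-plurisubharmonic functions produces $\phi_{n_k}\to\phi$ in $L^1$, whence $T_{n_k}\to T=\theta+dd^c\phi$ weakly, with $T$ positive closed in $\alpha$. The 1-regularity of $f$ ensures $f^*$ is continuous on compact families of positive closed $(1,1)$ currents in a fixed class, so passing to the limit in $f^*T_n=\lambda T_{n+1}$ yields $f^*T=\lambda T$.

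The crux is to show that any two invariant positive closed currents $T=\theta+dd^c\phi$ and $T'=\theta+dd^c\phi'$ in $\alpha$ agree. Writing $f^*\theta=\lambda\theta+dd^c u$, invariance translates (modulo additive constants) to $\lambda\phi=u+\phi\circ f$ and similarly for $\phi'$. Iterating,
\[
\phi=\frac{u_n+\phi\circ f^n}{\lambda^n}\pmod{\text{const}},\qquad \phi'=\frac{u_n+\phi'\circ f^n}{\lambda^n}\pmod{\text{const}},
\]
where $u_n$ is defined by $(f^n)^*\theta=\lambda^n\theta+dd^c u_n$. The minimality of $\vmin$ gives $\phi,\phi'\leq\vmin+C$, and combined with $(\star)$ this forces $\phi\circ f^n/\lambda^n$ and $\phi'\circ f^n/\lambda^n$ to tend to zero in $L^1$; hence $\phi-\phi'$ is constant and $T=T'$. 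Applied to any pair of subsequential limits this yields uniqueness, so the full sequence $T_n$ converges to a unique $T_\alpha$.

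Property (1) is then immediate, since the uniqueness just proved shows there is only one invariant positive closed current in $\alpha$. For property (2), given a decomposition $T_\alpha=S_1+S_2$ with $[S_i]=c_i\alpha$ and $c_1+c_2=1$, one observes that $v_{c_i\alpha}^{min}=c_i\vmin$, so $(\star)$ persists in each class $c_i\alpha$; applying the existence-and-uniqueness argument to $S_i$ identifies $\lim_n\frac{1}{\lambda^n}(f^n)^*S_i$ with the unique invariant current in $c_i\alpha$, namely $c_iT_\alpha$, and tracing the identification through the co-boundary potentials defined by $(f^n)^*S_i-\lambda^n S_i=dd^c w_n^{(i)}$ forces $w_n^{(i)}/\lambda^n\to 0$ and hence $S_i=c_iT_\alpha$. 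The main obstacle will be the uniqueness step: the functional identity $\lambda^n(\phi-\phi')=(\phi-\phi')\circ f^n+\text{const}$ alone does not force $\phi-\phi'$ to be constant, so the delicate point is to combine $(\star)$ with the upper-envelope bound $\phi,\phi'\leq\vmin+C$ in order to obtain the $L^1$-control of $\phi\circ f^n/\lambda^n$ in both signs.
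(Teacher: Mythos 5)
Your strategy diverges from the paper's in a way that leaves a genuine gap, which you yourself flag at the end. You attempt to get convergence of the full sequence by extracting a subsequential weak limit (via $\sup$-normalization and $L^1$-compactness of $\theta$-psh potentials) and then proving that \emph{any two} invariant positive closed currents in $\alpha$ coincide. This uniqueness claim is stronger than what the theorem asserts --- the statement only says $T_\alpha$ is \emph{minimally singular} among invariant currents, not that it is the unique one --- and, as you note, you cannot establish it: $(\star)$ together with $\phi\leq v_\alpha^{min}+C$ only controls $\phi\circ f^n/\lambda^n$ from above, and the functional equation gives no lower bound, so you cannot conclude $\phi\circ f^n/\lambda^n\to 0$. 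The ``hence $\phi-\phi'$ is constant'' step therefore does not go through, and with it the convergence of the full sequence, the independence of the subsequential limit, and your sketch of item (2) all collapse.

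The paper avoids this entirely by observing a monotonicity you missed. Writing $\frac{1}{\lambda}f^*T_\alpha^{min}=\theta+dd^c\phi_1$ with $\phi_1\leq 0$, the definition of $v_\alpha^{min}$ as an upper envelope forces $\phi_1\leq v_\alpha^{min}$, and iterating gives $\phi_n=\phi_1+\sum_{j=1}^{n-1}\lambda^{-j}(\phi_1-v_\alpha^{min})\circ f^j$, which is a \emph{decreasing} sequence of non-positive $\theta$-psh functions. By Hartogs it either converges or tends uniformly to $-\infty$. To rule out the latter, the paper uses Sibony's Ces\`aro-mean trick: averaging $\frac{1}{\lambda^i}(f^i)^*R$ produces an invariant current $S=\theta+dd^c u$ with $u\leq C$, and iterating the resulting cohomological identity yields the lower bound $\phi_n\geq u+\lambda^{-n}v_\alpha^{min}\circ f^n-\lambda^{-n}C$. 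This gives convergence of the full sequence directly, without any appeal to uniqueness. Only \emph{then} is $(\star)$ invoked, first to replace $T_\alpha^{min}$ by any smooth $\theta$ in the limit, and second (together with the same functional identity applied to an arbitrary invariant $\sigma=\theta+dd^c\psi$) to obtain $g_\alpha+C_1\geq\psi$, which is precisely minimal singularity. Item (2) is referenced to the literature, not reproved. So the missing ideas in your proposal are (a) the monotonicity $\phi_n\downarrow$, coming from the envelope property of $v_\alpha^{min}$, and (b) the Ces\`aro lower bound that prevents degeneration; with these, no uniqueness of invariant currents is needed and the argument closes.
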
 
 We have seen that such $\alpha$ exists for $\lambda=\lambda_1(f)$ but we can also allow other values of $\lambda$ as well. We also prove that $(\star)$ is a necessary condition under natural dynamical assumptions (see Proposition \ref{A}). \\ \indent
 The following result provides and algebraic criterion for the existence of Green currents when $X$ is projective:
 \begin{thm}
Let $X$ be a projective manifold and $f:X\dashrightarrow X$ be a dominant 1-regular rational map. Assume that $\lambda:=\lambda_1(f)>1$ is a simple eigenvalue of $f^*$ with $f^*\alpha_f =\lambda \alpha_f$. If $ \alpha_f \cdot C \geq 0$ for every algebraic irreducible curve $C \subset E_f^-:=f(I_f)$ then
\begin{equation*} 
\frac{1}{\lambda^{n}} v_{\alpha}^{min} \circ  f^{n} \rightarrow  0 \ \text{in} \ L^1(X). 
\end{equation*}  
 \end{thm}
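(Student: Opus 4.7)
The plan is to obtain $(\star)$ by constructing the Green current as a limit of normalized pullbacks of $\theta$, and then comparing this limit with the pullback of the minimal singular current $T_{\min}=\theta+dd^cv_{\alpha}^{min}$.

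Fix a smooth representative $\theta\in\alpha$. Since $f^*\alpha=\lambda\alpha$, choose a quasi-psh function $G$ with $f^*\theta-\lambda\theta=dd^cG$. Iterating gives
\begin{equation*}
\frac{1}{\lambda^n}(f^n)^*\theta=\theta+dd^cu_n,\qquad u_n=\sum_{k=0}^{n-1}\frac{G\circ f^k}{\lambda^{k+1}}.
\end{equation*}
If $\|G\circ f^k\|_{L^1}$ is bounded independently of $k$, the series converges absolutely in $L^1$, so $u_n\to u_{\infty}$ and $T_{\alpha}:=\theta+dd^cu_{\infty}$ is a positive closed current in $\alpha$. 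An analogous iteration starting from $T_{\min}$ in place of $\theta$ (legitimate since $T_{\min}$ lies in $\alpha$ and, by Theorem~1.1, its singular set sits inside $\bigcup_k I_{f^k}$) produces a companion potential $\psi_n=u_n+\lambda^{-n}v_{\alpha}^{min}\circ f^n$ of $\lambda^{-n}(f^n)^*T_{\min}$, converging in $L^1$ to some $\psi_{\infty}$. Both $\theta+dd^cu_{\infty}$ and $\theta+dd^c\psi_{\infty}$ are $f$-invariant positive closed currents in $\alpha$; the simplicity of $\lambda$ and the shared telescoping force them to coincide, so $\psi_{\infty}-u_{\infty}$ is pluriharmonic, hence constant. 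A normalization argument exploiting the maximality of $v_{\alpha}^{min}$ among non-positive $\theta$-psh functions then identifies this constant as zero, yielding $(\star)$.

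The technical crux is the uniform bound $\|G\circ f^k\|_{L^1}\leq C$. Changing variables gives $\int_X|G\circ f^k|\,\omega^m=\int_X|G|\,d((f^k)_*\omega^m)$, and the pushforward has fixed total mass $\int_X\omega^m$; so the issue is how strongly $(f^k)_*\omega^m$ concentrates on the singular set of $G$, which lies in $I_f\cup E_f^-$. Passing to a smooth projective model $\pi_k:\tilde X_k\to X$ on which $f^k$ lifts to a morphism $\tilde f_k$, the singular mass along a curve $C\subset E_f^-$ is controlled by intersection numbers $\tilde f_k^*\alpha\cdot E\cdot\pi_k^*\omega^{m-2}$, where $E$ ranges over $\pi_k$-exceptional divisors with $\tilde f_k(E)\supseteq C$. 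Using that $\tilde f_k^*\alpha$ differs from $\lambda^k\pi_k^*\alpha$ only by a class supported on exceptional divisors, and invoking $1$-regularity to collapse iterated contributions to curves actually lying in $f(I_f)$, these numbers reduce to $\lambda^k\,\alpha\cdot C$ plus a bounded correction. The hypothesis $\alpha\cdot C\geq 0$ is exactly what prevents exponential concentration, and a Skoda-type integrability estimate for the quasi-psh $G$ then delivers the uniform $L^1$ bound.

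The main obstacle is precisely this intersection-theoretic step: one must carefully trace how $(f^k)_*\omega^m$ distributes its mass among exceptional components of resolutions of iterates, and use $1$-regularity to collapse exceptional contributions to curves in $f(I_f)=E_f^-$ rather than in the deeper indeterminacy loci $I_{f^j}$. The projectivity of $X$ is essential here: it provides both the global intersection pairing and the Boucksom-type intersection theory of pseudoeffective classes on projective manifolds that converts the purely numerical hypothesis $\alpha\cdot C\geq 0$ on $E_f^-$ into the quantitative analytic control of potentials demanded by $(\star)$.
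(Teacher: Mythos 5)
Your overall plan—construct $T_{\alpha}=\lim\lambda^{-n}(f^n)^*\theta$ and then infer $(\star)$ by telescoping against $T^{min}_{\alpha}$—is the same plan the paper follows (it deduces $(\star)$ from the existence of the limit via Proposition~\ref{A}). But the route you take to the convergence has a genuine gap, and it misses the two ingredients the paper actually relies on.

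Your convergence argument hinges on the uniform bound $\|G\circ f^k\|_{L^1}\leq C$. That bound is much stronger than what is needed and is almost certainly false in general; all that can be extracted from convergence of $\sum\lambda^{-j}\gamma\circ f^j$ is $\|\gamma\circ f^k\|_{L^1}=o(\lambda^k)$, not $O(1)$. Worse, the intersection-theoretic sketch you offer does not establish it: you reduce the relevant mass to ``$\lambda^k\,\alpha\cdot C$ plus a bounded correction,'' and then claim $\alpha\cdot C\geq 0$ ``prevents exponential concentration.'' If the leading term really were $\lambda^k\,\alpha\cdot C$, non-negativity would not prevent it from growing like $\lambda^k$; non-negativity is not a bound. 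What the hypothesis $\alpha\cdot C\geq 0$ for $C\subset E_f^-$ actually buys, via the Negativity Lemma (Lemma~5.1 and Lemma~\ref{lem}), is the statement of Proposition~\ref{bdd}: the potential $\gamma$ of $\frac{1}{\lambda}f^*\theta-\theta$ is bounded \emph{above}, so one may normalize $\gamma\leq 0$. That makes the partial sums $\gamma_n=\sum_{j<n}\lambda^{-j}\gamma\circ f^j$ monotone decreasing. The companion lower bound comes not from any $L^1$ estimate on $\gamma\circ f^j$ but from Sibony's trick: Cesaro-average pullbacks of some positive closed $R\in\alpha$, extract a weak limit $S=\theta+dd^cu$ with $f^*S=\lambda S$, and deduce $\gamma_n\geq u+O(\lambda^{-n})$. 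Monotone plus bounded below gives $L^1$ convergence without ever controlling $\|\gamma\circ f^k\|_{L^1}$. These two steps—upper normalization by negativity lemma, lower bound by Sibony—are the content you replaced by the unproven uniform $L^1$ bound.

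Two further gaps. First, you assert without proof that $\theta+dd^cu_{\infty}$ is positive; in the paper this is the delicate part of Theorem~\ref{green}, proved by pushing forward smooth test forms, invoking the simplicity of $\lambda$ via a lemma of Bedford--Smillie, and using duality with the psef cone. Second, your ``identification'' step (that $\psi_{\infty}-u_{\infty}$ is pluriharmonic and zero, forced by simplicity and ``shared telescoping'') is vague; the clean argument is Proposition~\ref{A}: the telescoping identity $\phi=\sum_{i<n}\lambda^{-i}\gamma\circ f^i+\lambda^{-n}\phi\circ f^n$ gives $\lambda^{-n}\phi\circ f^n\to 0$ once $\sum\lambda^{-i}\gamma\circ f^i\to\phi$, and then the sandwich $\lambda^{-n}\phi\circ f^n\leq\lambda^{-n}v_{\alpha}^{min}\circ f^n\leq 0$ yields $(\star)$. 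No appeal to pluriharmonicity or to simplicity is needed in that step.
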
  
\indent In the last part of this work, we present some examples of birational maps in higher dimensions which fall into the frame work of Theorem 1.2, nevertheless the invariant class is not nef.\\ \indent
 Let $f:=L \circ J$ where $J:\mathbb{P}^d \dashrightarrow \mathbb{P}^d$ 
$$J[x_0:x_1:\dots:x_d]=[x_0^{-1}:x_1^{-1}:\dots:x_d^{-1}]$$
and $L$ is a linear map given by  given by a $(d+1)\times (d+1)$ matrix of the form 
\begin{equation*}
 L=
 \begin{bmatrix}
a_0-1 & a_1 & a_2 & \ldots & a_d
 \\
a_0 & a_1-1 & a_2 & \ldots & a_d
 \\
a_0 & a_1 & a_2-1 & \ldots & a_d
\\
\vdots & \vdots & \vdots & \ddots & \vdots \\

a_0 & a_1 & a_2 & \ldots & a_d-1
\end{bmatrix}
\end{equation*}
 with $a_j \in \mathbb{C}$ and $\sum_{j=0}^d a_j=2.$ The linear map $L$ is involutive that is $L =~ L^{-1}$ in $PGL(d+1,\mathbb{C}).$ Let $\Sigma_i:=\{[x_0:\dots:x_d]\in \mathbb{P}^d: x_i=0\}$ then $p_i:=f(\Sigma_i)\in~\mathbb{P}^d$ is the $i^{th}$ column of the matrix $L$. We define its orbit $\mathcal{O}_i$ as follows: $\mathcal{O}_i=\{p_i,f(p_i),f^2(p_i),\dots,f^{N_i-1}(p_i)\}$ if $f^j(p_i)\not\in I_f$ for $0\leq j\leq N_i-2$ and $f^{N_i-1}(p_i) \in I_f$ for some $N_i \in \mathbb{N}$, otherwise $\mathcal{O}_i=\{p_i,f(p_i),f^2(p_i),\dots\}.$ If $\mathcal{O}_i$ is finite (the first case above) we say that the orbit of $\Sigma_i$ is singular of length $N_i$. It follows from \cite{BK} that there exists a complex manifold $X$ together with a proper modification $\pi:X \rightarrow \mathbb{P}^d$ such that the induced map $f_X:X\dashrightarrow X$ is 1-regular. Moreover, if the length of the singular orbits are long enough (see Theorem \ref{BK}) then $\lambda_1(f)>1$ is the unique simple eigenvalue of $f^*|_{H^{1,1}(X,\mathbb{R})}$ of modulus greater than one. We define $S:=\{i\in\{0,1,\dots,d\}|\ \mathcal{O}_i\ \text{is singular}\}$ and denote its cardinality by $|S|$. If $S$ is non-empty, by conjugating $f$ with an involution without lost of generality we may assume that $S=\{0,\dots,k\}.$ 
\begin{thm}
Let $f_X:X \dashrightarrow X$ be as above with $\lambda:=\lambda_1(f_X)>1$ and $\alpha_f \in \psef$ such that $f^*\alpha_f =\lambda \alpha_f$. Then $\alpha_f$ is nef if and only if $|S|\leq1.$ Moreover, if $2 \leq|S|\leq d$ and all singular orbits of $f$ have the same length then 
\begin{equation*}
E_{nn}(\alpha_f)=
\begin{cases}
\{[x_0:x_1:\dots:x_d]:\ x_i=0\ \text{for}\ k+1\leq i\leq d\} & \text{if } k \leq d-2\\
\displaystyle \bigcup^{d-1}_{i=0} \{[x_0:x_1:\dots:x_d]:\ x_{i}=x_d=0\} & \text{if } k=d-1
\end{cases}
\end{equation*}
\end{thm}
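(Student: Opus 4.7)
The plan is as follows. The first step is to make $H^{1,1}(X,\mathbb{R})$ explicit: it is generated by the pullback $H = \pi^*[\omega_{FS}]$ of the hyperplane class together with the exceptional divisor classes $E_{i,j}$ for $i\in S$ and $j=0,\ldots,N_i-1$, where $E_{i,j}$ lies over $f^j(p_i)$. Following the formulas of \cite{BK}, the action of $f_X^*$ in this basis has the form $f_X^*H = dH - \sum_{i\in S}E_{i,0}$ (reflecting $f(\Sigma_i)=p_i$), $f_X^*E_{i,j}=E_{i,j-1}$ for $j\geq 1$, and $f_X^*E_{i,0}$ equals the class of the strict transform of $\Sigma_i$, which under the equal-length assumption $N_i=N$ takes a symmetric closed form in the remaining generators. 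Solving $f_X^*\alpha_f=\lambda\alpha_f$ then yields an explicit formula for $\alpha_f$ as a linear combination of $H$ and the $E_{i,j}$ with coefficients depending only on $d,\lambda,|S|,N$.

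Next I would establish (1). The case $|S|=0$ is immediate, since no blow-ups occur and $\alpha_f=[\omega_{FS}]$ is K\"ahler. The case $|S|=1$ is handled by a direct check: using the explicit formula for $\alpha_f$, one verifies $\alpha_f\cdot C\geq 0$ for every irreducible curve $C$, which suffices for nefness because there is only one singular orbit and hence no interaction between distinct orbits. For $|S|\geq 2$, a line joining two orbit-initial points $p_i,p_j$ with $i\neq j$ in $S$ (or its strict transform in $X$) gives $\alpha_f\cdot C<0$ by a short intersection computation; this forces $E_{nn}(\alpha_f)\neq\emptyset$ by the introduction's observation, and so $\alpha_f$ fails to be nef.

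For (2), I would use Theorem 1.1 to restrict attention to $E_{nn}(\alpha_f)\subset \bigcup_{k\geq 1}I_{f^k}$, and these indeterminacy sets in $\mathbb{P}^d$ are unions of intersections of coordinate hyperplanes $\Sigma_i$. For each such coordinate subspace I would compute the intersection of $\alpha_f$ with a generic line lying inside it. The symmetry coming from the equal-length hypothesis makes all indices $i\in S$ interchangeable, so the negative-intersection components are determined by a simple case analysis: when $k\leq d-2$ they coalesce into the $k$-plane $\{x_{k+1}=\cdots=x_d=0\}$, and when $k=d-1$ they reduce to the union $\bigcup_{i=0}^{d-1}\{x_i=x_d=0\}$. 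Every point of the stated set then automatically lies on a curve of negative intersection, hence belongs to $E_{nn}(\alpha_f)$.

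The hard step is the reverse inclusion: verifying that no point outside the stated set lies in $E_{nn}(\alpha_f)$. This requires producing, for each such $x$, positive closed currents in the class $\alpha_f$ with Lelong number at $x$ tending to zero, or equivalently controlling the current of minimal singularities. The dichotomy in the statement reflects this construction: when two or more coordinate hyperplanes $\Sigma_i$ with $i\notin S$ are available ($k\leq d-2$), enough positive representatives can be built to concentrate all singularities on the $k$-plane $\{x_{k+1}=\cdots=x_d=0\}$, while when only $\Sigma_d$ is non-singular ($k=d-1$) the representatives can only be made to concentrate singularities on the codimension-two pieces $\Sigma_i\cap\Sigma_d$, producing the stated union.
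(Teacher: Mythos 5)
Your overall plan (explicit coefficients of $\alpha_f$, negative curves to force points into $E_{nn}$, and moving effective representatives to exclude the rest) is the same strategy the paper follows via Lemma \ref{lemma} and Propositions \ref{1}--\ref{2}, but two of your concrete steps are wrong and a third is left entirely open.

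First, for $|S|\geq 2$ you propose a line through two orbit-\emph{initial} points $p_i,p_j$. This does not produce a negative intersection: by Lemma \ref{lemma}(3), the coefficient of the exceptional divisor over $p_{i,1}$ is $c_{i,1}=(\lambda-1)/(\lambda^{N_i}-1)$, which for $N_i\geq 2$ is strictly less than $1/2$ (and is small when $N_i$ is large), so a line through $p_i$ and $p_j$ meets $\alpha_f$ positively. The paper instead takes the line through the orbit \emph{endpoints} $e_{i_1},e_{i_2}$; the coefficients $c_{i,N_i}$ there exceed $1-\tfrac{1}{d-l-1}\geq \tfrac12$ by Lemma \ref{lemma} and Theorem \ref{BK}, giving $\alpha_f\cdot\widetilde{\ell}=1-c_{i_1,N_{i_1}}-c_{i_2,N_{i_2}}<0$. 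You must flip your choice of points.

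Second, in part (2) ``a generic line lying inside'' $\Sigma_{\{k+1,\dots,d\}}$ will not do: a generic line in that $\mathbb{P}^k$ misses all of the endpoints $e_0,\dots,e_k$, hence has $\alpha_f\cdot\ell=1>0$; even a line through two of them only sweeps out a finite union of lines, not the whole subspace. The paper's device is, for every point $p\in\Sigma_{\{k+1,\dots,d\}}$, to choose an irreducible curve $\gamma$ of \emph{degree $k$} passing through $p$ and through all $k+1$ endpoints $e_0,\dots,e_k$; then $\alpha_f\cdot\gamma=k-(k+1)c_N<0$ because $c_N>1-\tfrac{1}{d-1}$, and varying $p$ sweeps the whole subspace. (A parallel trick with index subsets of size $d-1$ handles $k=d-1$.) Your ``generic line'' step has to be replaced by this higher-degree curve construction.

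Third, and most seriously, the reverse inclusion --- that every point outside the stated set has $\nu(\alpha_f,x)=0$ --- is stated as ``the hard step'' but not actually carried out. The paper does this by producing several explicit families of effective $\mathbb{R}$-divisors representing $\alpha_f$ (the representations (\ref{rep1})--(\ref{rep5}), built from moving hyperplanes $\widetilde{H_l}$, $\widetilde{D_i}$, $\widetilde{F_j}$ and the hyperplanes containing $\Sigma_{\{k+1,\dots,d\}}$) and intersecting their supports; the non-nef locus is contained in that intersection. Appealing to Theorem 1.1 only bounds $E_{nn}(\alpha_f)$ by $\bigcup_k I_{f^k}$, which is much larger than the stated set, so that alone does not close the gap. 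On the other hand your $|S|\leq 1$ case, while it can in principle be done by checking $\alpha_f\cdot C\geq 0$ for all curves (since $X$ is projective and $\alpha_f$ is an $\mathbb{R}$-divisor class), is not obviously a ``direct check''; the paper instead writes $\alpha_f$ as a moving family of effective divisors and invokes Boucksom's criterion (Proposition \ref{Bou}), which you will anyway need for the reverse inclusion, so it is cleaner to use that route uniformly.
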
  
We also show that these maps fall into frame work of Theorem 1.2:
\begin{thm}\label{B2}
Let $f_X:X \dashrightarrow X$  be as above  with $\lambda_1(f_X)>1$. If $a_i\not=0$ for every $i\in S$ then condition $(\star)$ in Theorem 1.2 holds.
\end{thm}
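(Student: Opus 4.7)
The plan is to deduce condition $(\star)$ from the algebraic criterion of Theorem 1.3. Since $X$ is projective and, by the Bedford--Kim theorem cited just before the statement, $\lambda = \lambda_1(f_X) > 1$ is a simple eigenvalue of $f_X^*$, Theorem 1.3 reduces our task to showing
\[
\alpha_f \cdot C \ge 0 \qquad \text{for every irreducible algebraic curve } C \subset E_{f_X}^- := f_X(I_{f_X}).
\]

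The first step is to describe $E_{f_X}^-$ explicitly. On $\mathbb{P}^d$, the map $f = L \circ J$ contracts each coordinate hyperplane $\Sigma_i$ (away from $I_f$) to the point $p_i = L(e_i)$, the $i$-th column of $L$. A direct computation gives $p_i = a_i(1,\ldots,1) - e_i$ in affine coordinates, so the hypothesis $a_i \neq 0$ is equivalent to $p_i \neq e_i$ and in particular to $p_i \notin I_f$---the minimal regularity needed for the Bedford--Kim resolution $\pi\colon X \to \mathbb{P}^d$ to succeed. Under it, for each $i \in S$ the strict transform $\widetilde\Sigma_i$ is mapped biregularly by $f_X$ onto the exceptional divisor $E_{p_i}$, so $\widetilde\Sigma_i$ drops out of the indeterminacy locus of $f_X$. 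The remaining part of $I_{f_X}$ then lies over the codimension-two set $I_f = \bigcup_{i \neq j}(\Sigma_i \cap \Sigma_j)$, and $E_{f_X}^-$ is a union of strict transforms of curves inside the coordinate subspaces $\Sigma_i \cap \Sigma_j$ together with certain fiber curves inside the exceptional divisors of the composite blow-up.

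Next I would write $\alpha_f = c_0\,\pi^*H + \sum_{i \in S}\sum_{j=0}^{N_i - 1} c_{i,j}\, E_{f^j(p_i)}$ in the natural basis of $H^{1,1}(X,\mathbb{R})$ and use the cyclic structure of $f_X^*$ on each chain $E_{p_i} \to E_{f(p_i)} \to \cdots \to E_{f^{N_i - 1}(p_i)} \to \widetilde\Sigma_i \to E_{p_i}$ together with the simplicity of $\lambda$ to determine the $c_{i,j}$ uniquely up to scale. The hyperplane coefficient $c_0 = \alpha_f \cdot (\pi^*H)^{d-1}$ is strictly positive since $\alpha_f$ is psef and nontrivial. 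Pairing $\alpha_f$ with a curve $C \subset E_{f_X}^-$, the contribution $c_0(\pi^*H \cdot C)$ is always nonnegative, and I would use the hypothesis $a_i \neq 0$---which keeps the orbits $\mathcal O_i$ away from $I_f$ until their terminal points---to argue, in parallel with the intersection analysis of Theorem 1.4, that the curves of $E_{nn}(\alpha_f)$ identified there are not among the candidates in $E_{f_X}^-$; what is left are curves on which $\alpha_f \cdot C \ge 0$.

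The main obstacle is this last step: a careful enumeration of all the irreducible curves appearing in $E_{f_X}^-$ after the composite blow-up, together with the intersection calculations showing that $a_i \neq 0$ is precisely what excludes the curves in $E_{nn}(\alpha_f)$ that would give a negative intersection. The case $|S| \le 1$ is immediate, since $\alpha_f$ is then nef by Theorem 1.4 and the intersection condition is automatic; the case $2 \le |S| \le d$ is where the hypothesis $a_i \neq 0$ enters essentially, and I expect the verification to parallel the geometry of $E_{nn}(\alpha_f)$ already carried out in Theorem 1.4.
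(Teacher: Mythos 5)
You correctly identify the high-level strategy: apply Theorem 1.3 (i.e.\ Theorem \ref{green} together with Corollary \ref{greens}) to reduce the problem to checking $\alpha_f \cdot C \geq 0$ for irreducible curves $C \subset E_{f_X}^-$. But the proposal has two genuine gaps, and the first is a misidentification of where the hypothesis $a_i \neq 0$ actually enters. You read $a_i \neq 0$ as a regularity condition (so that $p_i \neq e_i$, $p_i \notin I_f$) needed for the Bedford--Kim blow-up to work. That is not the point: the Bedford--Kim construction handles $a_i = 0$ perfectly well --- that is exactly what the parameter $l = \#\{i \in S : a_i = 0\}$ in Theorem \ref{BK} is designed for. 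The real role of $a_i \neq 0$ for all $i \in S$ is to force $l = 0$, and hence, by Theorem \ref{BK}, the numerical bound $\lambda \geq d-1$. Combined with Lemma \ref{lemma}(2), this gives $\sum_{i \in S} c_{i,1} = d - \lambda \leq 1$, and that inequality is what ultimately makes $\alpha_f \cdot C$ nonnegative.

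The second gap is the intersection estimate itself, which you explicitly defer, and your proposed route around it is problematic. You describe $E_{f_X}^-$ as ``a union of strict transforms of curves inside the coordinate subspaces $\Sigma_i \cap \Sigma_j$,'' but those subspaces form $I_f$, not its image; the relevant fact (which carries the argument) is that $I_{f_X} = \bigcup_{|I|\geq 2}\widetilde{\Sigma_I}$ and therefore $E_{f_X}^- = f_X(I_{f_X}) \subset \bigcup_{i=0}^d L(\widetilde{\Sigma_i})$ with $L(\Sigma_i)=\{a \cdot x - x_i = 0\}$. A direct coordinate check using $p_{i,j} = [1:\dots:\tfrac{j(a_i-1)}{ja_i-(j-1)}:\dots:1]$ shows these hyperplanes meet the blown-up locus $\mathcal{O}_S$ only at the initial orbit points $p_{0,1},\dots,p_{k,1}$, so the only negative exceptional contributions to $\alpha_f \cdot C$ come from $P_{i,1}$, giving $\alpha_f \cdot C \geq \deg C - \sum_i c_{i,1}\,\mathrm{mult}_{p_{i,1}}C \geq \deg C\,(1-(d-\lambda)) \geq 0$. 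Your alternative plan of comparing against the explicit $E_{nn}(\alpha_f)$ from Theorem 1.4 cannot be carried out directly, since that computation assumes all singular orbits have the same length --- a hypothesis absent here --- and, more importantly, it would not by itself yield the intersection inequality on $E_{f_X}^-$ without the two observations above.
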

The outline of the paper as follows. In section 2, we provide the basic definitions and results which we will use in the sequel. In section 3, we discuss invariance properties of closed convex cones in $H^{1,1}(X,\mathbb{R})$ and prove Theorem 1.1. Section 4 is devoted to the proof of Theorem 1.2. We also discuss some cases for which $(\star)$ holds in section 4. In section 5, we discuss rational maps and prove Theorem 1.3. In the last section, we prove Theorems 1.4 and 1.5.
\section*{Acknowledgement}
I would like to express my sincere thanks to my advisor E. Bedford for his guidance and interest on this work. I am also grateful to J. Diller and V. Guedj for their comments and suggestions on an earlier draft. I also thank to the referee for his suggestions. 
\section{Preliminaries}

\subsection{Positive Cones}
Let $X$ be a compact K\"ahler manifold of dimension $k$ and $\omega$ be a fixed K\"ahler form  satisfying $\int_X\omega^k=1.$  All volumes will be computed with respect to the probability volume form $dV:=\omega^k$. Let $H^{1,1}(X)$ denote the Dolbeault cohomology group and let $H^2(X,\mathbb{Z})$, $H^2(X,\mathbb{R})$ and $H^2(X,\mathbb{C})$ denote the de-Rham cohomology groups with coefficients in $\mathbb{Z}, \mathbb{R}, \mathbb{C}.$ We also set $$H^{1,1}(X,\mathbb{R}):=H^{1,1}(X) \cap H^2(X,\mathbb{R}).$$ 

\begin{defn}
 A class $\alpha \in H^{1,1}(X,\mathbb{R)}$ is called K\"ahler if $\alpha$ can be re\-presented by a K\"ahler form.We denote the set of all K\"ahler classes by $\mathcal{K}$.  A class $\alpha$ is called numerically effective (nef) if it lies in the closure of the K\"ahler cone. The set of all nef classes will be denoted by $\nef.$
\end{defn}
 An upper semi continuous function $\varphi \in L^1(X)$ is called quasi-plurisubharmonic (qpsh) if there exists a smooth closed form $\theta$ such that $\theta + dd^c \varphi \geq 0$ in the sense of currents. Notice that a qpsh function is locally sum of a smooth function and a psh function. A closed (1,1) current T is called almost positive if there exists a real smooth (1,1) form $\gamma$ such that $T \geq \gamma$. \\ \indent
 The \textit{Lelong number} of a positive closed (1,1) current $T$ is defined by 
 $$ \nu(T,x):=\displaystyle \liminf_{z\rightarrow x}\frac{\phi(z)}{\log|x-z|}$$
  where $\phi$ is a local potential for $T$ that is $T=dd^c \phi$ near $x.$ This definition is independent of the choice of the potential $\phi$ and the local coordinates. If $T$ is almost positive then the Lelong numbers are still well-defined since the negative part contributes for zero. It follows from a theorem of Thie that $\nu([D],x)=mult_xD$ where $[D]$ is the current of integration along an effective divisor and $mult_x$ is the multiplicity of $D$ at $x.$ We denote the sub-level sets by $E_c(T):=\{x \in X: \nu(T,x)\geq c\}.$ A Theorem of Siu asserts that $E_c(T)$ is an analytic set of codimension at least 1. We also set $$E_+(T):=\cup_{c>0}E_c(T).$$

\indent  A class $\alpha \in H^{1,1}(X,\mathbb{R})$ is called \textit{pseudo-effective} (psef) if there exists a positive closed $(1,1)$ current $T$ such that $\{T\}=\alpha.$ The set of all psef classes, $\psef$ is a closed convex cone containing $\nef$. A positive closed current $T$ is called K\"ahler if there exists small $\epsilon>0$ such that $T \geq \epsilon \omega.$ A class $\alpha \in H^{1,1}(X,\mathbb{R})$ is said to be \textit{big} if there exists a K\"ahler current $T$ such that $\alpha=\{T\}.$ We denote the set of all big classes by $H_{big}^{(1,1)}(X,\mathbb{R})$. This is an open convex cone and coincides with the interior of $\psef$. Finally, we stress that these definitions coincide with the classical ones in complex geometry \cite{Dem}. 
 \begin{thm}[\cite{Bou, DP}] 
A class $\alpha \in \nef$ is big if and only if $\alpha^n \neq 0$.
\end{thm}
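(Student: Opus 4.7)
The plan is to prove the two implications separately. The forward direction is essentially formal once bigness is used. The backward implication (numerical positivity implies bigness) is the deep part of the Demailly--Paun / Boucksom result and requires a Monge--Amp\`ere mass concentration argument.

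\textbf{Forward direction.} Assume $\alpha$ is big and nef, so there is a K\"ahler current $T \in \alpha$ with $T \geq \varepsilon \omega$. I would first regularize $T$ via Demailly's approximation theorem to obtain a sequence of currents in $\alpha$ with analytic singularities and a uniform lower bound $(\varepsilon/2)\omega$. On the ample (Zariski open) locus where these approximants are smooth, the Boucksom non-pluripolar product $\langle T^n \rangle$ is well-defined and bounded below pointwise by $(\varepsilon/2)^n \omega^n$, so $\int_X \langle T^n\rangle > 0$. For a nef class this non-pluripolar self-intersection agrees with the cohomological intersection $\alpha^n$, giving $\alpha^n \ne 0$. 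An equivalent, more elementary route is to approximate $\alpha$ from inside the K\"ahler cone by $\alpha + \delta[\omega]$, note $(\alpha+\delta[\omega])^n > 0$, and pass to the limit using continuity of the intersection pairing together with the nef hypothesis to ensure the volume function is continuous at $\alpha$.

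\textbf{Backward direction.} Assume $\alpha$ is nef with $\alpha^n > 0$. Let $\theta_\delta$ be smooth K\"ahler representatives of $\alpha + \delta[\omega]$ for $\delta > 0$. To exhibit a K\"ahler current in $\alpha$ I would use a Yau-type mass concentration. Pick a smooth point $x_0 \in X$ and let $\chi_t \geq 0$ be smooth approximations of the Dirac mass at $x_0$ normalized so that $\int_X \chi_t\, \omega^n = 1$. By the Aubin--Yau theorem, solve
\begin{equation*}
(\theta_\delta + dd^c \varphi_{\delta,t})^n \;=\; c_{\delta,t}\, \chi_t\, \omega^n, \qquad c_{\delta,t} \;=\; (\alpha+\delta[\omega])^n.
\end{equation*}
Since $\alpha^n > 0$ and $\alpha$ is nef, $c_{\delta,t}$ is bounded below by a positive constant uniform in $\delta$ small. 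Normalizing $\sup_X \varphi_{\delta,t} = 0$ gives a uniform $L^1$-bound (this step uses nefness crucially), so the family is precompact; extract a limit $\varphi$ along a diagonal sequence $t,\delta \to 0$ and set $T := \alpha + dd^c \varphi$. The right-hand side concentrates its mass at $x_0$, so Demailly's mass--Lelong comparison combined with the strict positivity of the total mass $\alpha^n$ forces $T \geq \varepsilon_0 \omega$ on $X \setminus \{x_0\}$ for some $\varepsilon_0 > 0$. Repeating with a finite collection of base points covering $X$ and taking a convex combination of the resulting currents yields a genuine K\"ahler current in $\alpha$, so $\alpha$ is big.

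\textbf{Main obstacle.} The substantive difficulty is extracting strict positivity from Monge--Amp\`ere mass concentration: when $(\theta_\delta + dd^c\varphi_{\delta,t})^n$ concentrates at $x_0$ while keeping total mass $\alpha^n$ fixed, one must show that the weak limit dominates $\varepsilon_0 \omega$ off $x_0$. This requires Kolodziej-type $L^\infty$ estimates for Monge--Amp\`ere solutions together with Demailly's mass/Lelong number comparisons to convert pointwise concentration into a uniform positive lower bound. Everything else --- solvability of the Monge--Amp\`ere equation, $L^1$ precompactness of quasi-psh families associated to a nef class, and continuity of the intersection pairing --- is standard.
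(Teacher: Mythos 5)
The paper does not prove this statement; it cites it to Boucksom and Demailly--Paun, so there is no internal argument to compare against. Evaluating your sketch on its own terms: you correctly identify the theorem and the right technique (Monge--Amp\`ere mass concentration \`a la Demailly--Paun), and the forward direction is essentially fine, though needlessly heavy --- the most elementary route is simply $\alpha^n = \alpha^{n-1}\cdot(\alpha - \varepsilon\omega) + \varepsilon\, \alpha^{n-1}\cdot\omega \geq \varepsilon\,\alpha^{n-1}\cdot\omega \geq \dots \geq \varepsilon^n \omega^n > 0$, using only that $\alpha - \varepsilon\omega$ is psef and that a power of a nef class against a psef class is nonnegative. (The ``elementary route'' you offer in the last sentence of that paragraph is circular: $(\alpha+\delta\omega)^n > 0$ for $\delta > 0$ holds for any nef class and gives nothing in the limit.)

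The backward direction has a genuine gap. The assertion that the weak limit $T$ of the Monge--Amp\`ere solutions with right-hand side concentrating at $x_0$ satisfies $T \geq \varepsilon_0\omega$ on $X\setminus\{x_0\}$ is not a consequence of Ko\l{}odziej estimates or mass--Lelong comparison, and it is not what the Demailly--Paun (or Boucksom) argument produces. Concentrating the Monge--Amp\`ere density at $x_0$ drives the \emph{determinant} of $\theta_\delta + dd^c\varphi_{\delta,t}$ to zero away from $x_0$; there is no mechanism forcing the form itself to dominate $\varepsilon_0\omega$ there, and in general the limit is merely a closed positive current with a positive Lelong number at $x_0$ --- which by itself is far from a K\"ahler current. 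The actual extraction of strict positivity goes through the blow-up $\mu: \widetilde X \to X$ at $x_0$ with exceptional divisor $E$: the mass concentration shows that the limiting currents in $\mu^*\alpha + \delta\mu^*\omega$ have Lelong number $\geq \nu_0 > 0$ along $E$ uniformly in $\delta$, so one may subtract $\nu_0[E]$ and still have a closed positive current; the strict positivity then comes from the fact that $\mathcal{O}(-E)$ is $\mu$-ample, i.e.\ $\mu^*\omega - \eta[E]$ is a K\"ahler class on $\widetilde X$ for small $\eta > 0$, which converts the exceptional subtraction into a genuine K\"ahler lower bound. Without passing to the blow-up and using this relative positivity of $-E$, the argument stalls exactly at the step you flag as ``the substantive difficulty.'' (The subsequent ``finite collection of base points'' step is also moot: a closed positive $(1,1)$-current dominating $\varepsilon\omega$ off a single point already dominates it everywhere, since such currents do not charge points in dimension $\geq 2$; the problem is that the single-point claim is itself false as stated.)
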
 
 
 \subsection{Currents with analytic singularities}

\indent Following \cite{Dem} and \cite{Bou}, a closed almost positive (1,1) current $T=\theta+dd^c\phi$ is said to have analytic singularities along a subscheme $V(\mathcal{I})$ defined by a coherent ideal sheaf $\mathcal{I}$  if there exists $c>0$ and locally
$$\phi=\frac{c}{2}\log\big(|f_1|^2 +..+|f_N|^2\big)+u$$ 
where $u$ is a smooth function and $f_1,..,f_N$'s are holomorphic functions which are local generators of $\mathcal{I}.$ Blowing-up $X$ along $V(\mathcal{I})$ and resolving the singularities in the sense of Hironaka, we obtain a modification $\mu:\widetilde{X} \rightarrow X.$ Moreover, $D:=\mu^{-1}(V(\mathcal{I}))$ is an effective divisor in $\widetilde{X}$ and $\mu^*T$ has analytic singularities along $D$, thus it follows from Siu decomposition that 
$$\mu^*T=\theta+cD$$
where $\theta$ is a smooth (1,1) form. Furthermore, if $T\geq \gamma$ then we have $\theta \geq~\mu^*\gamma$. In particular, if $T\geq0$ then $\theta \geq0$.   
This decomposition is called \textit{log resolution} of singularities of $T$.
 \begin{thm}[\cite{Dem}] \label{Dem}
 Let $T\geq \gamma$ be an almost positive closed (1,1) current on $X$. Then there exists  a sequence of positive real numbers $\epsilon_n$ decreasing to 0 and a sequence of almost positive closed (1,1) currents $T_n \in \{T\}$ with analytic singularities such that $T_n \rightarrow T$ weakly, $T_n\geq \gamma-\epsilon_n \omega $ and $\nu(T_n,x)$ increases uniformly to $\nu(T,x)$ with respect to $x \in X$.
 \end{thm}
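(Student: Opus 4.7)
My plan is to deduce Theorem~\ref{B2} from Theorem~1.3 applied to the family $f_X:X\dashrightarrow X$ constructed above. Three of the four hypotheses of Theorem~1.3 are already provided by the cited Bedford--Kim construction: $X$ is a projective manifold, $f_X$ is a 1-regular rational map, and under the stated largeness assumption on the singular orbit lengths $\lambda:=\lambda_1(f_X)>1$ is a simple eigenvalue of $f_X^*$ with a well-defined invariant class $\alpha_f\in\psef$. The whole task therefore reduces to verifying
\[
\alpha_f\cdot C \;\ge\; 0 \qquad\text{for every irreducible algebraic curve } C\subset E_{f_X}^-:=f_X(I_{f_X}),
\]
under the hypothesis $a_i\ne 0$ for every $i\in S$.

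To do this, I would first identify $f_X(I_{f_X})$ explicitly. On $\mathbb{P}^d$ the original indeterminacy locus is $I_f=I_J=\bigcup_{i\ne j}\{x_i=x_j=0\}$, a union of codimension-$2$ linear subspaces, and $f$ contracts each $\Sigma_i$ to the point $p_i$. After the sequence of point blowups $\pi:X\to\mathbb{P}^d$ along the singular orbit points $f^j(p_i)$ with $i\in S$, $0\le j\le N_i-1$, the lifted indeterminacy $I_{f_X}$ consists of strict transforms of certain of the codim-$2$ coordinate subvarieties together with subvarieties inside the introduced exceptional divisors $E_{i,j}$, and $f_X(I_{f_X})$ is a corresponding finite union whose irreducible curves can be enumerated from the combinatorial data of the orbits. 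Simultaneously I would write $\alpha_f$ in the basis $\{\pi^*H,\ E_{i,j}\}_{i\in S,\,0\le j\le N_i-1}$ of $\operatorname{NS}(X)\otimes\mathbb{R}$: the matrix of $f_X^*$ in this basis is provided by \cite{BK}, and solving $f_X^*\alpha_f=\lambda\alpha_f$ pins down $\alpha_f=c\,\pi^*H-\sum_{i,j}c_{i,j}E_{i,j}$ with coefficients $c,\,c_{i,j}$ fixed by $\lambda$ and the orbit lengths.

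With both descriptions in hand, for each irreducible curve $C\subset f_X(I_{f_X})$ I would expand $C$ as a cycle on the blowup and compute $\alpha_f\cdot C$ using the standard intersection pairing on a composition of point blowups. The hypothesis $a_i\ne 0$ for $i\in S$ should enter exactly at this point: were some $a_i$ to vanish, the corresponding point $p_i=f(\Sigma_i)$ would degenerate onto one of the coordinate subvarieties appearing in the explicit description of $E_{nn}(\alpha_f)$ from Theorem~1.4, which would force a curve of $f_X(I_{f_X})$ to lie inside the non-nef locus and hence to intersect $\alpha_f$ negatively; the assumption $a_i\ne 0$ rules this out. The main technical obstacle is precisely this final intersection calculation: one has to carry out the bookkeeping through the entire sequence of point blowups, match each curve $C$ against the explicit description of $E_{nn}(\alpha_f)$ in Theorem~1.4, and only after this matching does one see how the hypothesis $a_i\ne 0$ is exactly what makes all the relevant signs work out. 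Once $\alpha_f\cdot C\ge 0$ is established on every such $C$, Theorem~1.3 delivers $(\star)$ and hence Theorem~\ref{B2}.
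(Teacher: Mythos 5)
Your proposal addresses the wrong statement. The theorem you were asked about is Theorem~\ref{Dem}, Demailly's regularization theorem: given an almost positive closed $(1,1)$ current $T\geq\gamma$, one can approximate it weakly by currents $T_n\in\{T\}$ with analytic singularities satisfying $T_n\geq\gamma-\epsilon_n\omega$ and with Lelong numbers increasing uniformly to those of $T$. This is a foundational result in pluripotential theory, cited by the paper from \cite{Dem} without proof; its proof is entirely analytic, resting on Demailly's construction of approximating potentials via Bergman-kernel / $L^2$ extension techniques combined with Ohsawa--Takegoshi, together with a local-to-global gluing. Nothing in it touches dynamics, blowups of $\mathbb{P}^d$, or intersection numbers.

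What you have sketched instead is a strategy for Theorem~\ref{B2} (Theorem 1.5 in the introduction), the statement that the Noetherian maps $f=L\circ J$ with $a_i\neq 0$ for $i\in S$ satisfy condition~$(\star)$. That is an entirely different result, appearing in a different section of the paper, and your outline (reduce to the algebraic criterion of Theorem~1.3, identify $E_{f_X}^-$ explicitly, expand $\alpha_f$ in the exceptional basis, and check $\alpha_f\cdot C\geq 0$) is indeed close to what the paper does for Theorem~\ref{B2}; but that has no bearing on the assigned statement. To address Theorem~\ref{Dem} you would need to engage with the regularization-of-currents machinery: local approximation of the quasi-psh potential by $\frac{1}{2m}\log\sum|g_j|^2$ with $g_j$ an orthonormal basis of a weighted Bergman space, uniform control of the error via the Ohsawa--Takegoshi extension theorem, convergence of Lelong numbers via the integrability exponent, and patching with a partition of unity at the cost of losing $\epsilon_n\omega$ in positivity. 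None of this appears in your proposal.
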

 
 \subsection{Currents with minimal singularities}
  
  Let $\varphi_1$ and $\varphi_2$ be two qpsh functions. Following \cite{DPS}, we say that $\varphi_1$ is less singular than $\varphi_2$ if $\varphi_2 \leq \varphi_1+C$ for some constant $C.$ If $T_1$ and $T_2$ are two closed almost positive currents we write $T_i=\theta_i + dd^c \varphi_i$ where $\theta_i \in \{T_i\}$ is a smooth closed form and $\varphi_i$ is a qpsh function. We say that $T_1$ is less singular than $T_2$ if $\varphi_2 \leq \varphi_1+C$ . Notice that this definition is independent of the choice of the representatives $\theta_i$ and potentials $\varphi_i$. \newline
\indent  For a class $\alpha \in H^{1,1}(X,\mathbb{R})$ and a real smooth (1,1) form $\gamma$, we denote the set of all closed almost positive (1,1) currents $T \in \alpha$ satisfying $T \geq \gamma$ by~$\alpha[\gamma]$. We fix a smooth represantative $\theta \in \alpha$ and  define 
  $$ v^{min}_{\alpha,\gamma}:= sup\{\varphi \leq 0\ |\ \theta+dd^c \varphi \geq \gamma\}.$$
It follows that $T^{min}_{\alpha, \gamma}:= \theta + dd^c  v^{min}_{\alpha,\gamma} \in \alpha[\gamma]$  and $\nu(T^{min}_{\alpha, \gamma},x) \leq~ \nu(T,x)$ for every $x \in X$ and for every $T\in\alpha[\gamma]$. If, in particular $\gamma=0$ then we write $T^{min}_{\alpha,\gamma}=\Tmin$ and refer to it as the \textit{minimally singular current}. Notice that minimally singular currents are not unique in general. For example, if $\alpha \in \mathcal{K}$ then every smooth positive closed form $\theta \in \alpha$ is a minimally singular current. However, if $S= \theta' + dd^c u^{min}_{\alpha,\gamma} \in \alpha$ is another such current, since $v^{min}_{\alpha,\gamma}-u^{min}_{\alpha,\gamma} \in L_{loc}^{1}(X)$ and $dd^c\big(v^{min}_{\alpha,\gamma}-u^{min}_{\alpha,\gamma}\big)$ is  smooth, $v^{min}_{\alpha,\gamma}-u^{min}_{\alpha,\gamma}$ is also smooth and hence, bounded. Thus,  $v^{min}_{\alpha,\gamma}$ and $u^{min}_{\alpha,\gamma}$ are equivalent in the sense of singularities. Therefore, for a fixed class $\alpha \in \psef$, the current of minimal singularities is well-defined modulo $dd^c(C^{\infty})$.  

\subsection{Minimal multiplicities and non-nef locus}
Following \cite{Bou}, for a class $\alpha \in \psef$ we define 
$$ \nu(\alpha,x):=\displaystyle \sup_{\epsilon>0}\nu(T^{min}_{\alpha,\epsilon},x) $$
where $T^{min}_{\alpha,\epsilon}:=T^{min}_{\alpha, -\epsilon \omega} \in \alpha[-\epsilon \omega]$. Since the right hand side in the definition of $ \nu(\alpha,x)$ is increasing as $\epsilon$ decreases, the $\sup$ coincides with the limit.  This definition is independent of the choice of the K\"ahler form $\omega$. We also remark that for every positive closed (1,1) current $T \in \alpha$ and  $x \in X,$ we have $0\leq \nu(\alpha,x) \leq \nu(T,x) \leq C$ where $C >0$ is a constant depending only on the cohomology class $\alpha$. 
\\ \indent
 If $A$ is an analytic subset of X we define
$$ \nu(\alpha,A):=\displaystyle \inf_{x\in A}\nu(\alpha,x).$$ 
\indent A class $\alpha \in \psef$ is called \textit{nef in codimension 1} if $\nu(\alpha,D)=0$ for every prime divisor $D\subset X.$ We denote the set of all such classes by $\mathcal{E}_1.$ It follows from the following proposition that $\mathcal{E}_1 \subset \psef$ is also a closed convex cone.
\begin{prop}[\cite{Bou}]\label{Bou}
Let $\alpha \in \psef$ be a class
\begin{itemize}
\item[(i)] $\alpha$ is nef if and only if $\nu(\alpha,x)=0$ for every $x \in X.$
\item[(ii)] $\alpha \rightarrow \nu(\alpha,x)$ is sub-additive and homogenous in $\alpha$ for every $x \in X.$
\item[(iii)]$\alpha \rightarrow \nu(\alpha,x)$ is lower semi-continuous on $\psef$ and continuous on $\bigc$ for every $x\in X$. 
\item[(iv)] If $\alpha\in \bigc$ then $\nu(\alpha,x)= \nu(\Tmin,x)$ for every $x \in X.$
\end{itemize}
\end{prop}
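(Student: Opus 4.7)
The plan is to derive $(\star)$ from Theorem 1.3. The Bedford--Kim construction guarantees that $X$ is projective and, under the standing assumption that the singular orbits are long enough, that $\lambda=\lambda_1(f_X)>1$ is a simple eigenvalue of $f_X^*$ on $H^{1,1}(X,\mathbb{R})$. Hence it suffices to verify the intersection hypothesis of Theorem 1.3: every irreducible algebraic curve $C\subset E_f^-:=f_X(I_{f_X})$ satisfies $\alpha_f\cdot C\ge 0$.

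The case $|S|\le 1$ is immediate: by Theorem 1.4 the class $\alpha_f$ is then nef, so all intersection numbers with curves are non-negative. Assume from now on that $2\le|S|\le d$. Unwinding the Bedford--Kim geometry, the indeterminacy of $f=L\circ J$ on $\mathbb{P}^d$ is $I_f=\bigcup_{i<j}\Sigma_i\cap\Sigma_j$, and $f(I_f)$ is a union of linear subspaces of $\mathbb{P}^d$, each containing at least one of the points $p_i=L(e_i)=[a_i:\dots:a_i-1:\dots:a_i]$ with $i\in S$. Lifting to $X$ and pushing through the blow-ups along the singular orbits $\mathcal{O}_i$, the set $E_f^-=f_X(I_{f_X})$ is covered by the proper transforms of those linear subspaces together with the exceptional divisors lying over the orbit points $f^m(p_i)$, $i\in S$, $0\le m\le N_i-1$.

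Compare this with the description of $E_{nn}(\alpha_f)$ from Theorem 1.4: the non-nef locus equals (the proper transform of) the coordinate subspace $\Lambda=\{x_{k+1}=\dots=x_d=0\}$ when $k\le d-2$, or the union $\bigcup_{i=0}^{d-1}\{x_i=x_d=0\}$ when $k=d-1$. Any irreducible curve $C\subset E_f^-$ with $\alpha_f\cdot C<0$ must lie in $E_{nn}(\alpha_f)$; combining the two descriptions, such a $C$ is forced to project to a linear subspace of $\mathbb{P}^d$ contained in $\Lambda$ and passing through some orbit point $p_i$, $i\in S$. Since the $j$-th coordinate of $p_i$ equals $a_i$ whenever $j\ne i$, and every defining coordinate of $\Lambda$ has index $\ge k+1$ while $i\in\{0,\dots,k\}$, the condition $p_i\in\Lambda$ forces $a_i=0$. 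The hypothesis $a_i\ne 0$ for every $i\in S$ rules this out, and 1-regularity of $f_X$ propagates the same conclusion to every iterate $f^m(p_i)$ along $\mathcal{O}_i$. Thus no irreducible curve of $E_f^-$ lies in $E_{nn}(\alpha_f)$, so $\alpha_f\cdot C\ge 0$ for all such $C$ and Theorem 1.3 yields $(\star)$.

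The main obstacle is the middle paragraph: giving a precise enough description of the irreducible components of $f_X(I_{f_X})$ in the blown-up model to confirm that every curve of negative intersection must pass through some $p_i$, $i\in S$. Once that geometric bookkeeping is settled, the column structure of the matrix $L$ makes the relevance of the assumption $a_i\ne 0$ transparent.
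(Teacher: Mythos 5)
Your proposal does not address the statement at hand. The statement is Proposition~\ref{Bou}, a list of foundational properties of the minimal multiplicity function $\nu(\alpha,x)$ (its vanishing characterizing nefness, sub-additivity and homogeneity in $\alpha$, lower semi-continuity on $\psef$ and continuity on $\bigc$, and agreement with $\nu(\Tmin,x)$ for big classes). This is a general potential-theoretic result attributed to Boucksom and stated without proof in the paper; any proof would have to work with the definition $\nu(\alpha,x)=\sup_{\epsilon>0}\nu(T^{min}_{\alpha,-\epsilon\omega},x)$, the Demailly regularization theorem, and Siu decomposition.

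What you have written instead is an argument for Theorem~\ref{B2}, the last theorem of the paper concerning Noetherian mappings $f=L\circ J$ on $\mathbb{P}^d$: you reduce condition $(\star)$ to the intersection criterion $\alpha_f\cdot C\ge 0$ for curves $C\subset E_f^-$ via Theorem 1.3, and then use the column structure of $L$ and the hypothesis $a_i\ne 0$ to rule out negative curves in $E_f^-$. That is the right theorem for that argument (and in fact close in spirit to the paper's proof of Theorem~\ref{B2}), but it has nothing to do with Proposition~\ref{Bou}. You need to start over and prove the four assertions about $\nu(\alpha,x)$ directly, or at the very least explain how they follow from the cited reference.
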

\begin{cor}
If $\alpha \in \nef \cap H^{1,1}_{big}(X,\mathbb{R})$ then $\nu(\Tmin,x)=0$ for every $x \in X.$ Moreover, if $\alpha \in \psef$ and $\nu(\Tmin,x)=0$ for every $x\in X$ then $\alpha$ is nef.
\end{cor}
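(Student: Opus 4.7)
My plan is to derive both halves of the corollary directly from Proposition~\ref{Bou}, with no auxiliary constructions needed.

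For the first assertion, suppose $\alpha\in\nef\cap\bigc$. The strategy is just to chain parts (i) and (iv) of Proposition~\ref{Bou}. Since $\alpha$ is nef, part (i) yields $\nu(\alpha,x)=0$ for every $x\in X$; since $\alpha$ is big, part (iv) gives the identification $\nu(\alpha,x)=\nu(\Tmin,x)$. Combining these immediately produces $\nu(\Tmin,x)=0$ for all $x\in X$.

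For the converse, assume $\alpha\in\psef$ with $\nu(\Tmin,x)=0$ everywhere. The plan is to transfer this information from $\Tmin$ to each of the minimally singular currents $T^{min}_{\alpha,\epsilon}\in\alpha[-\epsilon\omega]$ that appear in the definition of $\nu(\alpha,x)$. The key observation is that since $\Tmin\geq 0$, trivially $\Tmin\geq -\epsilon\omega$, so $\Tmin\in\alpha[-\epsilon\omega]$ for every $\epsilon>0$. Applying the inequality $\nu(T^{min}_{\alpha,\epsilon},x)\leq \nu(T,x)$ (valid for every $T\in\alpha[-\epsilon\omega]$, as recalled just after the definition of $v^{min}_{\alpha,\gamma}$) to $T=\Tmin$, I obtain $\nu(T^{min}_{\alpha,\epsilon},x)\leq 0$, which forces $\nu(T^{min}_{\alpha,\epsilon},x)=0$ since Lelong numbers are nonnegative. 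Taking the supremum over $\epsilon>0$ then yields $\nu(\alpha,x)=0$ for every $x$, and Proposition~\ref{Bou}(i) finishes the argument, proving that $\alpha$ is nef.

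I do not foresee a real obstacle here, since the corollary is essentially a repackaging of Proposition~\ref{Bou}. The only point requiring a moment's thought is that the forward direction genuinely uses bigness: part (iv) fails in general for classes on the boundary of the pseudoeffective cone because $\nu(\cdot,x)$ is only lower semicontinuous there rather than continuous, so one cannot identify $\nu(\alpha,x)$ with $\nu(\Tmin,x)$ without the open-cone hypothesis $\alpha\in\bigc$.
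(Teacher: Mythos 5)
Your proof is correct, and since the paper omits the proof of this corollary (treating it as an immediate consequence of Proposition~\ref{Bou}), your argument is exactly the intended one: part (iv) combined with part (i) for the forward direction, and the comparison $\nu(T^{min}_{\alpha,\epsilon},x)\leq\nu(\Tmin,x)$ (valid because $\Tmin\geq 0\geq-\epsilon\omega$ puts $\Tmin$ in $\alpha[-\epsilon\omega]$) followed by part (i) for the converse. Your remark that the forward implication genuinely relies on bigness, since (iv) can fail on the boundary of the pseudoeffective cone where $\nu(\cdot,x)$ is only lower semicontinuous, is also accurate.
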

\begin{defn}[\cite{Bou}] Let $\alpha \in \psef$ then non-nef locus of $\alpha$ is defined by 
$$ E_{nn}(\alpha):=\{ x \in X\ | \nu(\alpha,x)>0\}.$$
\end{defn} 
%\begin{rem} \label{dim} It follows from Siu's theorem that $E_{nn}(\alpha)$ is a union of at most countably many analytic sets. By Lemma 6.3 of \cite{Dem}, $E_{nn}(\alpha)$ does not have any isolated points. Moreover, if $\dim E_{nn}(\alpha)=0$ then $E_{nn}(\alpha)=\emptyset$ (see Lemma 6.5 of \cite{Dem}.) 
%\end{rem}
We also have the following description of the non-nef locus:
\begin{prop}
Let $\alpha \in \psef$ then
$$E_{nn}(\alpha)= \displaystyle \bigcup_{\epsilon>0} \bigcap_T \mu(|D|)$$
where $T$ runs over the set $\{T\in \alpha[-\epsilon \omega]: T\ has \ analytic\ singularities\}$ and $\mu:\widetilde{X} \rightarrow X$, $\mu^*T=\theta + [D]$ is log resolution of singularities of $T$ and $|D|$ denotes the support of the current of integration $[D]$.
\end{prop}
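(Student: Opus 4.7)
The plan is to establish both inclusions using the definition $\nu(\alpha,x)=\sup_{\epsilon>0}\nu(T^{min}_{\alpha,\epsilon},x)$, together with the key local observation that for a current $T$ with analytic singularities along $V(\mathcal{I})$ whose log resolution is $\mu^*T=\theta+[D]$, one has $\nu(T,x)>0$ if and only if $x\in V(\mathcal{I})=\mu(|D|)$. This equivalence follows immediately from the local description $\phi=\tfrac{c}{2}\log(|f_1|^2+\cdots+|f_N|^2)+u$: at a point $x\in V(\mathcal{I})$ the Lelong number is at least $c>0$ (by Thie's theorem applied to the ideal $\mathcal{I}$), while at $x\notin V(\mathcal{I})$ the potential is smooth and $\nu(T,x)=0$.

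For the inclusion $E_{nn}(\alpha)\subset \bigcup_{\epsilon>0}\bigcap_T \mu(|D|)$, I would assume $\nu(\alpha,x)>0$ and choose $\epsilon_0>0$ with $\nu(T^{min}_{\alpha,\epsilon_0},x)>0$. Since $T^{min}_{\alpha,\epsilon_0}$ has the smallest Lelong numbers among currents in $\alpha[-\epsilon_0\omega]$, every $T\in\alpha[-\epsilon_0\omega]$ satisfies $\nu(T,x)\geq \nu(T^{min}_{\alpha,\epsilon_0},x)>0$. For any such $T$ with analytic singularities, the observation above forces $x\in\mu(|D|)$, placing $x$ in the intersection associated to $\epsilon_0$.

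For the reverse inclusion, I would suppose $x\in\bigcap_T \mu(|D|)$ for some $\epsilon_0>0$, fix an auxiliary $\epsilon_0'\in(0,\epsilon_0)$, and apply Theorem~\ref{Dem} to $T^{min}_{\alpha,\epsilon_0'}$ (with $\gamma=-\epsilon_0'\omega$). This yields a sequence of currents $T_n\in\alpha$ with analytic singularities, $T_n\geq-(\epsilon_0'+\epsilon_n)\omega$ and $\nu(T_n,x)\nearrow\nu(T^{min}_{\alpha,\epsilon_0'},x)$. Once $\epsilon_0'+\epsilon_n<\epsilon_0$, the currents $T_n$ lie in $\alpha[-\epsilon_0\omega]$, so by hypothesis $x\in\mu_n(|D_n|)$ and therefore $\nu(T_n,x)>0$. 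Hence $\nu(T^{min}_{\alpha,\epsilon_0'},x)>0$ and consequently $\nu(\alpha,x)>0$, i.e.\ $x\in E_{nn}(\alpha)$.

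The main delicacy lies in (i) pinning down the equivalence between positivity of the Lelong number and membership in the log-resolution support for analytic-singularity currents, and (ii) interleaving two regularity parameters $\epsilon_0'<\epsilon_0$ so that Demailly's regularization in Theorem~\ref{Dem} produces currents inside the cone $\alpha[-\epsilon_0\omega]$ for which the hypothesis is actually available. Everything else is bookkeeping with the definitions.
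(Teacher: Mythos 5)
Your proposal is correct and follows essentially the same route as the paper: both directions are handled by combining the definition of $\nu(\alpha,x)$ with the minimality of Lelong numbers of $T^{min}_{\alpha,\epsilon}$ in one direction, and Demailly's regularization (Theorem~\ref{Dem}) in the other. The only cosmetic differences are that you package the relevant fact as an explicit equivalence $\nu(T,x)>0 \Leftrightarrow x\in\mu(|D|)$ for analytic-singularity currents (the paper instead unwinds it via the inequality $\nu(T,x)\leq\nu(\mu^*T,p)$ and the Siu decomposition of $\mu^*T$), and that you make the interleaving of parameters $\epsilon_0'<\epsilon_0$ slightly more explicit than the paper's $\epsilon<\epsilon_1$.
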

\begin{proof}
Let $x \in  \displaystyle \bigcup_{\epsilon>0} \bigcap_T \mu(|D|).$ Then there exists  $\epsilon_1>0$ such that $x \in \mu(|D|)$ for every  $T \in \alpha[-\epsilon_1 \omega]$ which has analytic singularities and log resolution  $\mu:\widetilde{X} \rightarrow X$, $\mu^*T=\theta+[D].$ This implies that  $\nu(T,x)>0$. Let  $\epsilon < \epsilon_1$ and let $T^{min}_{\alpha,\epsilon}$ be current of minimal singularities. By Theorem \ref{Dem} there exists a sequence $T_k \in \alpha[-\epsilon_1 \omega]$ with analytic singularities such that $T_k$ converges weakly to $T$ and $\nu(T_k,x)$ increases to $\nu(T^{min}_{\alpha,\epsilon},x).$ Thus, $0<\nu(T^{min}_{\alpha,\epsilon},x) \leq \nu(\alpha,x).$ That is $x \in E_{nn}(\alpha).$

  To prove reverse inclusion, let $x \in E_{nn}(\alpha),$ then by definition of $\nu(\alpha,x)$ there exists $\epsilon>0$ such that $0<\nu(T^{min}_{\alpha,\epsilon},x).$ Now, let $T\in \alpha[\epsilon]$ has analytic singularities. Resolving it's singularities we obtain $\mu: \widetilde{X} \rightarrow X$ such that $\mu^*T=\theta+[D]$ where $\theta \geq -\epsilon \omega$ is smooth and $D$ is an effective divisor. Since $0<\nu(T^{min}_{\alpha,\epsilon},x) \leq \nu(T,x)\leq \nu(\mu^*(T),p)$ for every $p\in \widetilde{X}$ with $\mu(p)=x,$ we conculde that $x \in \mu(|D|)$.
\end{proof}

\begin{defn}
Let $\alpha \in \psef$ be a class. An irreducible algebraic curve $C$ is called $\alpha$-negative if the intersection product $\inter <0.$
\end{defn} 
\begin{prop} [\cite{BDPP}] \label{negative}
For $\alpha \in \psef$ every $\alpha$-negative curve $C$ is contained in $E_{nn}(\alpha).$  
\end{prop}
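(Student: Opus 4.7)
The plan is to invoke the description of $E_{nn}(\alpha)$ from the preceding proposition, according to which $x \in E_{nn}(\alpha)$ exactly when there exists some $\epsilon > 0$ such that every current $T \in \alpha[-\epsilon\omega]$ with analytic singularities and log resolution $\mu^*T = \theta + [D]$ satisfies $x \in \mu(|D|)$. Thus it suffices to produce a single $\epsilon_0 > 0$ for which $C \subset \mu(|D|)$ whenever $T$ ranges over the currents in $\alpha[-\epsilon_0 \omega]$ with analytic singularities; this will immediately give $C \subset E_{nn}(\alpha)$.

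The natural choice is any $\epsilon_0$ in the interval $(0, -(\inter)/(\omega \cdot C))$, which is non-empty since $\inter < 0$ by hypothesis and $\omega \cdot C > 0$. Given such a $T$ with log resolution $\mu:\widetilde X \to X$, $\mu^*T = \theta + [D]$, $\theta \geq -\epsilon_0 \mu^*\omega$ smooth, I would argue by contradiction. If $C \not\subset \mu(|D|)$, then the strict transform $\widetilde C \subset \widetilde X$ is a well-defined irreducible curve with $\mu_*\widetilde C = C$ and $\widetilde C \not\subset |D|$. The projection formula then gives
$$\inter = \mu^*T \cdot \widetilde C = \theta \cdot \widetilde C + D \cdot \widetilde C.$$
Now $D \cdot \widetilde C \geq 0$ because $D$ is effective and $\widetilde C$ is not contained in any of its components, while $\theta \cdot \widetilde C \geq -\epsilon_0 (\omega \cdot C)$ because $\theta + \epsilon_0 \mu^*\omega$ is a smooth non-negative $(1,1)$-form and $\int_{\widetilde C}\mu^*\omega = \omega \cdot C$ by the projection formula. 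Adding the two estimates yields $\inter \geq -\epsilon_0(\omega \cdot C)$, contradicting the choice of $\epsilon_0$. Hence $C \subset \mu(|D|)$ for every such $T$, and the preceding proposition then concludes $C \subset E_{nn}(\alpha)$.

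The step that needs the most care is ensuring that the strict transform $\widetilde C$ is genuinely an irreducible curve on which the projection formula applies with $\mu_*\widetilde C = C$. This follows from the log-resolution structure, since the support $|D|$ of the total transform contains the image of the exceptional locus of $\mu$; consequently the assumption $C \not\subset \mu(|D|)$ forces $C$ to meet the Zariski-open set where $\mu$ is biholomorphic in a dense subset, so $\widetilde C$ is a well-defined irreducible curve mapping birationally onto $C$. Once this geometric input is in place, the proof reduces to the brief intersection-theoretic computation above, and I expect verifying this geometric setup carefully to be the only real obstacle.
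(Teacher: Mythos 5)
Your proof is correct and follows essentially the same route as the paper: both rely on the description of $E_{nn}(\alpha)$ via analytic singularities, pass to the strict transform $\widetilde C$, and combine the effectivity of $D$ with the bound $\theta \geq -\epsilon\mu^*\omega$ through the projection formula. The only stylistic differences are that the paper argues contrapositively and lets $\epsilon \to 0$ rather than fixing $\epsilon_0 < -(\inter)/(\omega\cdot C)$, and it packages the lower bound on $\theta$ by setting $S = T + \epsilon\omega \geq 0$ so that $\theta_\epsilon \geq 0$; these are cosmetic rephrasings of the same computation.
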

\begin{proof}
If $C\not\subset E_{nn}(\alpha)$ then for every $\epsilon>0$ there exists $T\in \alpha[-\epsilon \omega]$ with analytic singularities such that $C \not\subset \mu(|D|)$ where $\mu:\widetilde{X} \rightarrow X$ is log reolution of $T$ and $\mu^*(T)=\theta+[D]$. \newline \indent
Let $\widetilde{C} \subset \widetilde{X}$ be the strict transform of $C$ so that $\mu_*\widetilde{C}=C$ and define  $S=T+\epsilon \omega\geq 0.$ Then $\mu^*S=\theta_{\epsilon} +[D]$ where $\theta_{\epsilon}\geq0$ since $S\geq 0.$ Thus,
$$ \alpha +\epsilon\{ \omega\} \cdot C = \langle S, \mu_*\widetilde{C}\rangle =\langle\mu^*S, \widetilde{C}\rangle=\langle \theta_{\epsilon}+ [D] ,\widetilde{C}\rangle \geq 0$$
the last inequality follows from $\theta_{\epsilon} \geq 0$ and $\widetilde{C} \not\subset D.$ Since $\epsilon >0$ is arbitrary we obtain $\inter \geq 0.$ 

\end{proof}
\begin{rem}
Thus, the non-nef locus contains the union of the $\alpha-$negative curves. However, in general, the non-nef locus of a pseudo-effective class $\alpha$ is not equal to union of $\alpha$-negative curves (see \cite{BDPP}).
\end{rem}

\section{Dynamics of Meromorphic maps}

Let $X$ be a compact K\"ahler manifold of dimension $k$ and $f:X\dashrightarrow X$ be a \textit{meromorphic map} that is $f$ is holomorphic on the set $X\backslash I_f$ such that the closure of the graph $\Gamma_f$ of $f:X\backslash I_f \rightarrow X$ in $X\times X$ is a irreducible analytic set of dimension $k$. Let $\pi_i:X\times X \rightarrow X$ denote the canonical projections. Then $I_f$ coincides with the set of points $z$ for which $\pi^{-1}_1(z)\cap \Gamma_f$ contains more than one point. The set $I_f$, called the \textit{indeterminacy set} of $f,$ is also an analytic set of codimension at least 2. In fact, for every $z\in I_f$ the set $\pi^{-1}_1(z)\cap \Gamma_f$ has positive dimension. Moreover, $X\backslash I_f$ is the largest open set where $f$ is holomorphic. We also set
 $$\mathcal{I_1}:=\bigcup_{n\geq 1} I_{f^n}.$$
\indent For a subset $Z\subset X$ we define the \textit{total transform} of $Z$ under $f$ by 
$$f(Z):=\pi_2(\pi_1^{-1}(Z)\cap \Gamma_f).$$
With the above convention we define $E_f^-:=f(I_f)$.\\ \indent
 We say that $f$ is \textit{dominant} if the projection $\pi_2$ restricted to $\Gamma_f$ is surjective. This is equivalent to saying that Jacobian determinant of $f$ does not vanish identically in any coordinate chart.  We refer the reader to the surveys \cite{Si} and \cite{G} for basic properties of meromorphic maps. 
\\ \indent
It's well known that $f$ induces a linear action on $H^{p,p}(X, \mathbb{R})$ as follows: \\
Let $\widetilde{\Gamma_f}$ denote a desingularization of   $\Gamma_f$ then the diagram 
$$\xymatrix{& \widetilde{\Gamma_f}\ar[dl]_{\pi_1}\ar[dr]^{\pi_2} &\\
X\ar@{-->}[rr]_f & & X}$$
commutes
 . Let $\theta$ be a smooth closed $(p,p)$ form on X then we set $f^*\theta=~ (\pi_1)_*(\pi_2)^*\theta$ where $(\pi_2)^*\theta$ is a smooth form and the later is push-forward as a current. Since pull-back and push-forward commute with $d$ operator, $f^*\theta$ is a $d$-closed $(p,p)$ current. Then we set 
$$ f^*\{\theta \}=\{f^*\theta\}$$
where $\{\theta \}$ denotes de-Rham cohomology class of $\theta$ and $\{f^*\theta\}$ is the de-Rham cohomology class of $f^*\theta$. \\ \indent 
  Similarly, one can define push-forward by $f_*\theta= (\pi_2)_*(\pi_1)^*\theta$. It follows that the action of pull-back on $H^{p,p}(X,\mathbb{R})$ is dual to that of push-forward on  $H^{k-p,k-p}(X,\mathbb{R})$ with respect to intersection product. \\ \indent
  We say that $f$ is  \textit{p-regular} whenever $ (f^n)^*=(f^*)^n \ \text{on} \ H^{p,p}(X,\mathbb{R})$ as linear maps for $n= 1,2,\dots$\\ \indent
  We also denote
  $$ \delta_p(f):= \int_{X \backslash I_f}f^*\omega^p \wedge \omega^{k-p} $$
  and the $p^{th}$ \textit{dynamical degree} of $f$ by  
  $$ \lambda_p(f):= \displaystyle\limsup_{n \rightarrow \infty} [\delta_p(f^n)]^{\frac{1}{n}}. $$
  In particular, if $f$ is p-regular  then $\lambda_p(f)$ coincides with the spectral radius of $f^*|_{H^{p,p}(X,\mathbb{R})}.$  
\subsection{Invariant Classes and Singularities}

We denote the set of all positive closed $(1,1)$ currents by $\mathcal{T}(X).$ Following \cite{G02} we define the pull-back of $T$ as follows: we fix a point $x_0 \in X\backslash I_f$, then locally we can write $T= dd^c u$ for a psh function $u$  near $f(x_0)$ and define $f^*T= dd^c u\circ f$ near $x_0$ which is independent of the choice of the local potential $u$. Therefore, we obtain a well-defined positive closed (1,1) current on $X\backslash I_{f}$. Now, since $I_f$ is an analytic set of codimension at least two it follows form \cite{HaPo} that it extends trivially to a unique positive closed (1,1) current $f^* T$ on $X$. Furthermore, since $f$ is dominant the action $T \rightarrow f^*T$ is continuous with respect to weak topology on postive closed (1,1) currents (\cite{Meo}). Moreover, $\{f^* T\} \in H^{1,1}(X,\mathbb{R})$ is independent of the choice of $T \in \alpha$ for $\alpha \in \psef$ and $f^*\alpha=\{f^*T\} \in \psef$. 
%\begin{prop}
%Let $f:X\dashrightarrow X$ be a dominant meromorphic map. Then $f$ is 1-regular if and only if 
%$$(f^*)^nT=(f^n)^*T$$
%for every $T\in \mathcal{T}(X)$ and $n=1,2,\dots$
%\end{prop}
 % \begin{proof}
% Let $\omega$ be a real smooth positive $(1,1)$ form on $X$ then $(f^n)^*\omega=(f^*)^n\omega$ on the set $X\backslash (I_f \cup f^{-1}(I_f) \cup \dots \cup f^{-(n-1)}(I_f)).$ Since the former is a form with $L^1_{loc}$ coefficients it does not charge proper analytic sets and we get $(f^n)^*\omega\leq(f^*)^n\omega.$ Since $f$ is 1-regular these two currents have the same cohomology class and hence $(f^n)^*\omega=(f^*)^n\omega.$ Now, let $\theta$ be a real smooth closed form on $X$. Then we can write $\theta=\omega_1-\omega_2$ where $\omega_i's$ are smooth positive forms. Thus, by linearity we obtain $(f^n)^*\theta=(f^*)^n\theta.$\\
 %It follows from \cite{Meo} that $f^*$ extends continuously to $\mathcal{T}(X)$ and therefore the assertion follows from approximating $T$ by smooth forms.
   %\end{proof}
   Notice that $\psef$ is a closed, convex cone which is strict (i.e. $\psef \cap -\psef=\{0\}$). Since $\psef$ is invariant under the linear action $f^*:H^{1,1}(X,\mathbb{R}) \rightarrow H^{1,1}(X,\mathbb{R})$ it follows from a Perron-Frobenius type argument (\cite{DF}) that there exists a class $\alpha \in \psef$ such that $f^* \alpha=r_1(f)\  \alpha$ where $r_1(f)$ is the spectral radius of $f^*|_{H^{1,1}(X,\mathbb{R})}$. In particular, if $f$ is 1-regular then $r_1(f)=\lambda_1(f)$. \\ \indent 
The following argument is adapted from \cite{DS}. We choose a basis for $H^{1,1}(X,\mathbb{C})=H^{1,1}(X,\mathbb{R})\otimes_{\mathbb{R}}\mathbb{C}$ so that the associated matrix of $f^*$ is in Jordan form. Let $J_{\lambda_j,m_j}$ denote its Jordan blocks for $1\leq j\leq r.$ In other words, we can decompose $H^{1,1}(X,\mathbb{C})$ into a direct sum of complex subspaces $E_j$
$$H^{1,1}(X,\mathbb{C})= \bigoplus_{1\leq j\leq r} E_j \ \text{with}\ \dim E_j=m_j \ \text{and}\ \sum_{j=1}^rm_j=h^{1,1}$$
such that the restriction of $f^*$ to $E_j$ is given by the Jordan block $J_{\lambda_j,m_j}.$
Since $f^*$ preserves the psef cone which is a proper cone, we may assume that $\lambda_1=r_1(f)$ and $m:=m_1$ is the \textit{index of the spectral radius}. Moreover, we can also assume that $(|\lambda_j|,m_j)$ is ordered so that either $|\lambda_j|> |\lambda_{j+1}|$ or $|\lambda_j|=|\lambda_{j+1}|$ and $m_j\geq m_{j+1}$ for $1\leq j\leq r.$ Let $\nu$ be the integer such that $|\lambda_j|=\lambda_1$ for $1\leq j\leq \nu.$ Let $\tilde{E_j}$ denote the hyperplane generated by the first $m_j-1$ vectors of the basis of $E_j$ associated to the Jordan form. Then we have $\|(f^*)^nv\|\sim n^{m-1}\lambda_1^n$ for every $v\not\in \tilde{E_1}\oplus \dots \oplus \tilde{E_{\nu}}\oplus E_{\nu+1}\oplus \dots \oplus E_r.$ Notice that this property holds for every $v \in \mathcal{K}$ because given any $v' \in H^{1,1}(X,\mathbb{R})$ we can find $v''\in \mathcal{K}$ and $\sigma\geq 0$ such that $v'=\sigma v-v''.$ We let $F_j$ denote the eigenspace of $f^*_{|_{E_j}},$ which is a complex line. We define 
$$F^{\mathbb{C}}:=F_1\oplus \dots \oplus F_{\nu} \ \text{and} \ H^{\mathbb{C}}:=\bigoplus_{\substack{\lambda_j=\lambda_1\\ m_j=m}}F_j.$$
We also set $F:=F^{\mathbb{C}}\cap H^{1,1}(X,\mathbb{R})$ and $H:=H^{\mathbb{C}}\cap H^{1,1}(X,\mathbb{R}).$ Notice that for any $2\leq j\leq \nu$ there exists a unique $\theta_j\in \mathbb{S}:=\mathbb{R}/ 2\pi\mathbb{Z}$ such that $\lambda_j=\lambda_1\exp(i \theta_j).$ Let $\theta:=(\theta_2,\dots,\theta_{\nu})\in \mathbb{S}^{\nu-1}.$ We let $\Theta$ denote the closed subgroup of $\mathbb{S}^{\nu-1}$ generated by $\theta.$ This is a finite union of real tori. The orbit of each point $\theta'\in \Theta$ under the translation $\theta'\rightarrow \theta'+\theta$ is dense in $\Theta.$ If $\lambda_j=\lambda_1$ for every $2\leq j\leq \nu$ then $F=H$ and $\Theta=\{0\}.$ We also set 
$$\Lambda_N:=\frac{1}{N}\sum_{n=1}^N\frac{(f^*)^n}{n^{m-1}\lambda_1^n}$$
For the proof of the following proposition we refer the reader to \cite{DS}.
\begin{prop}\label{DS}
Assume that $\lambda_1>1.$ Then the sequence $(\Lambda_N)$ converges to a surjective real linear map $\Lambda_{\infty}:H^{1,1}(X,\mathbb{R})\rightarrow H.$ Let $n_i$ be an increasing sequence of positive integers then $(n_i^{1-m}\lambda_1^{-n_i} (f^*)^{n_i})$ converges if and only if $(n_i \theta)$ converges. Moreover, any limit $L_{\infty}$ of  $(n_i^{1-m}\lambda_1^{-n_i} (f^*)^{n_i})$ is a surjective real linear map $L_{\infty}:~H^{1,1}(X,\mathbb{R}) \rightarrow F.$
\end{prop}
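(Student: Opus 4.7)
The plan is to reduce the statement to an explicit Jordan-block computation, combined with the elementary Ces\`aro identity
$$\frac{1}{N}\sum_{n=1}^{N}e^{in\phi}\;\longrightarrow\;\begin{cases}1 & \text{if } \phi\in 2\pi\mathbb{Z},\\ 0 & \text{otherwise.}\end{cases}$$
First I would use the Jordan basis of $H^{1,1}(X,\mathbb{C})$ already introduced in the text, so that $f^*|_{E_j}=J_{\lambda_j,m_j}=\lambda_j I+N_j$ with $N_j$ a standard nilpotent of index $m_j$, and expand
$$(J_{\lambda_j,m_j})^{n}=\sum_{k=0}^{m_j-1}\binom{n}{k}\lambda_j^{n-k}N_j^{k}.$$
Since $\binom{n}{m_j-1}\sim n^{m_j-1}/(m_j-1)!$, dividing by $n^{m-1}\lambda_1^{n}$ shows that the contribution of block $E_j$ tends to zero unless $|\lambda_j|=\lambda_1$ and $m_j=m$, in which case the restriction of $n^{1-m}\lambda_1^{-n}(f^*)^{n}$ to $E_j$ equals
$$\frac{e^{in\theta_j}}{(m-1)!\,\lambda_j^{m-1}}\,N_j^{m-1}\;+\;o(1),\qquad \theta_1:=0.$$
The operator $N_j^{m-1}$ has rank one with image the eigenline $F_j$, so on the complement $\tilde E_1\oplus\cdots\oplus\tilde E_\nu\oplus E_{\nu+1}\oplus\cdots\oplus E_r$ the sequence $n^{1-m}\lambda_1^{-n}(f^*)^n$ tends to $0$, while on the remaining summands it becomes a direct sum of rotating rank-one projections onto the $F_j$.

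Second, for the subsequence statement: from the block-by-block expression the sequence $n_i^{1-m}\lambda_1^{-n_i}(f^*)^{n_i}$ converges precisely when each of the finitely many phases $e^{in_i\theta_j}$ converges, and this in turn is equivalent to the convergence of $n_i\theta$ in $\mathbb{S}^{\nu-1}$. Any limit $L_\infty$ is a fixed linear combination of the rank-one operators $N_j^{m-1}$ with unit-modulus coefficients, hence surjects onto $F^{\mathbb{C}}=\bigoplus_{j=1}^{\nu}F_j$; the fact that $f^*$ is defined over $\mathbb{R}$ forces complex-conjugate blocks to carry complex-conjugate coefficients, so $L_\infty$ restricts to a real surjection $H^{1,1}(X,\mathbb{R})\to F$. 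For the Ces\`aro average, the Ces\`aro identity wipes out every block with $\theta_j\neq 0$, leaving only those with $\lambda_j=\lambda_1$ and $m_j=m$; summing these contributions gives the limit $\Lambda_\infty$, a real surjection onto $H$ by the same rank-one argument.

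The main obstacle I expect is the surjectivity coupled with the real-structure bookkeeping: one must verify that after pairing conjugate Jordan blocks the unit-modulus coefficients appearing in $L_\infty$ never conspire to vanish, and that the images of the $N_j^{m-1}$ together span the required real subspaces $F$ and $H$. Checking that every element of the closed subgroup $\Theta\subset\mathbb{S}^{\nu-1}$ is actually realized as a limit $\lim n_i\theta$ (so that the family of possible $L_\infty$'s is fully parametrized by $\Theta$) is the other place where a genuine equidistribution / density-of-orbits argument is required; everything else is a direct consequence of the Jordan calculation above.
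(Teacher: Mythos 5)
The paper itself does not prove this proposition: it states ``For the proof of the following proposition we refer the reader to \cite{DS}'', so there is no in-paper argument to compare against. Your Jordan-block expansion $(J_{\lambda_j,m_j})^n=\sum_{k=0}^{m_j-1}\binom{n}{k}\lambda_j^{n-k}N_j^k$ together with the Ces\`aro identity is the right mechanism and is almost certainly the one used in \cite{DS}, so the overall route is sound.

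There is, however, one concrete gap you pass over. Your own computation shows that after dividing by $n^{m-1}\lambda_1^n$, the block $E_j$ contributes a nonvanishing term \emph{only} when $|\lambda_j|=\lambda_1$ \emph{and} $m_j=m$; blocks with $|\lambda_j|=\lambda_1$ but $m_j<m$ are killed by the factor $n^{m_j-m}\to 0$. Yet you then assert that any limit $L_\infty$ surjects onto $F^{\mathbb{C}}=\bigoplus_{j=1}^{\nu}F_j$, where by the paper's definition $\nu$ counts \emph{all} blocks with $|\lambda_j|=\lambda_1$, irrespective of $m_j$. These two subspaces agree only if $m_j=m$ for every $j\le\nu$. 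The cone-preservation argument (Vandergraft-type) gives $m_j\le m$ for peripheral eigenvalues, not equality, so this identity must be argued or acknowledged as a hypothesis inherited from \cite{DS}. The same coincidence is needed for the ``if and only if'' direction: if some $j\le\nu$ had $m_j<m$, the sequence $n_i^{1-m}\lambda_1^{-n_i}(f^*)^{n_i}$ could converge while $n_i\theta_j$ oscillates, breaking the stated equivalence with the convergence of $n_i\theta\in\mathbb{S}^{\nu-1}$.

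The two difficulties you flag at the end are, by contrast, more benign than you suggest. Reality of $\Lambda_\infty$ and $L_\infty$ is automatic, since $\Lambda_N$ and $n^{1-m}\lambda_1^{-n}(f^*)^n$ are real operators on $H^{1,1}(X,\mathbb{R})$ to begin with; the conjugate-block bookkeeping only enters when identifying the image, and surjectivity onto the real locus follows from surjectivity over $\mathbb{C}$ by the usual averaging $v\mapsto\frac{1}{2}(v+\bar v)$. And the density-of-orbits question for $\Theta$ is not actually required by the statement: the proposition only asserts an equivalence of convergence and the shape of an arbitrary limit, not that every point of $\Theta$ is realized by some $\lim n_i\theta$.
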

We also define $$H_{psef}:= H \cap \psef $$ 
   and 
   $$H_\mathcal{N}:=\Lambda_{\infty}(\nef).$$
   It's clear that $H_\mathcal{N} \subset H_{psef}.$ Moreover, it follows from Proposition \ref{DS} that $H_\mathcal{N}$ has a non-empty interior in $H$. Indeed, since $\Lambda_{\infty}$ is surjective it is open. The K\"ahler cone, $\mathcal{K} \subset H^{1,1}(X,\mathbb{R})$ is also open and $\Lambda_{\infty}(\mathcal{K})$ is contained in $H_\mathcal{N}.$      
  \begin{thm}(\cite{Fa},\cite{Ki})\label{F}
 Let $f:X\dashrightarrow X$ be a dominant meromorphic map and $T$ be a positive closed $(1,1)$ current on $X$. Then  for every $x \in X \backslash I_f$
 $$ \nu(T,f(x)) \leq \nu(f^*T,x) \leq C_{f}\nu(T,f(x)) $$
where $C_f>0$ is constant which does not depend on $T$. 
\end{thm}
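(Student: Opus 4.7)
\emph{Plan.} The statement is local in nature, so I would fix $x_0 \in X \setminus I_f$, set $y_0 = f(x_0)$, and work in coordinates centered at $x_0$ and $y_0$ in which $f$ is a holomorphic germ $(\mathbb{C}^k, 0) \to (\mathbb{C}^k, 0)$. Writing $T = dd^c u$ for a local psh potential $u$ near $y_0$ reduces the problem to comparing $\nu(u \circ f, 0)$ with $\nu(u, 0)$, since $f^*T = dd^c(u \circ f)$ near $x_0$ by the definition of pullback recalled in the excerpt. I would use the classical formula $\nu(v, 0) = \lim_{r \to 0^+} M_v(r)/\log r$ with $M_v(r) := \sup_{|z|=r} v(z)$; this limit exists because $M_v$ is a convex increasing function of $\log r$ when $v$ is psh.

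For the lower bound, holomorphicity of $f$ at $x_0$ supplies a constant $C > 0$ with $|f(x)| \leq C|x|$ for small $x$. Since $u$ is psh and hence achieves its maximum on spheres (sub-mean-value), we obtain $u(f(x)) \leq M_u(C|x|)$ and so $M_{u \circ f}(r) \leq M_u(Cr)$. Dividing by the negative quantity $\log r$ reverses the inequality, and the observation $\log(Cr)/\log r \to 1$ as $r \to 0^+$ yields $\nu(u \circ f, 0) \geq \nu(u, 0)$ in the limit.

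The upper bound is the substantive direction. My plan is first to reduce to currents with analytic singularities by invoking Demailly's regularization (Theorem \ref{Dem}): choose a sequence $T_n \in \{T\}$ with analytic singularities such that $T_n \to T$ weakly and $\nu(T_n, y) \nearrow \nu(T,y)$. Since $f^*$ is weakly continuous on positive closed $(1,1)$-currents (by the result of Meo cited in the paper) and Lelong numbers enjoy an upper semi-continuity property for weak limits of almost positive currents with a uniform lower bound, it will suffice to establish the inequality with a constant $C_f$ independent of $n$ for each such $T_n$. For a $T_n$ with potential $\frac{c}{2} \log \sum_j |g_j|^2 + (\text{smooth})$ along the ideal sheaf $\mathcal{I} = (g_1, \ldots, g_N)$, the pulled-back potential is $\frac{c}{2} \log \sum_j |g_j \circ f|^2 + (\text{smooth})$, which corresponds to analytic singularities along the ideal $f^{-1}\mathcal{I}$ generated by the $g_j \circ f$. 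Consequently $\nu(f^*T_n, x_0) = c \cdot \mathrm{mult}_{x_0}(f^{-1}\mathcal{I})$ and $\nu(T_n, y_0) = c \cdot \mathrm{mult}_{y_0}(\mathcal{I})$, so the analytic question reduces to the purely algebraic one of bounding $\mathrm{mult}_{x_0}(f^{-1}\mathcal{I})$ by $C_f \cdot \mathrm{mult}_{y_0}(\mathcal{I})$ uniformly in $\mathcal{I}$.

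The main obstacle is producing such a uniform bound. At points $x_0$ where $f$ is locally finite, the classical Lojasiewicz inequality $|f(x) - y_0| \geq c|x - x_0|^e$ applies and one can take $C_f$ to be the local Lojasiewicz exponent $e$. At points where a germ of positive dimension through $x_0 \notin I_f$ collapses to $y_0$—as can happen on the exceptional set $E_f^-$—the Lojasiewicz inequality in this form fails, and I would argue instead by passing to a log resolution of the graph of $f$, on which $f$ factors as a proper modification composed with a locally finite map, and then transfer the Lelong-number estimate through this resolution. Uniformity of the resulting constant over all of $X \setminus I_f$ would then follow from compactness of $X$ together with upper semi-continuity of the relevant local algebraic invariants of $f$ along the analytic stratification of its critical locus.
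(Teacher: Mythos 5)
The paper does not give its own proof of this statement; it is cited to Favre and Kiselman, so there is nothing in the paper to compare the proposal against. Assessing the proposal on its own merits: the lower-bound argument is correct, but the upper bound has a gap at the very first step. Demailly's regularization (Theorem \ref{Dem}) produces approximants $T_n$ that are \emph{less} singular than $T$, so $\nu(T_n,\cdot)\nearrow\nu(T,\cdot)$ and the local potentials satisfy $\phi\leq\phi_n+O(1)$; consequently $\phi\circ f\leq\phi_n\circ f+O(1)$, giving $\nu(f^*T_n,x_0)\leq\nu(f^*T,x_0)$. Siu semi-continuity along the weak limit $f^*T_n\to f^*T$ yields the very same inequality. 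Both mechanisms therefore produce $\limsup_n\nu(f^*T_n,x_0)\leq\nu(f^*T,x_0)$, whereas to pass the estimate $\nu(f^*T_n,x_0)\leq C_f\,\nu(T_n,y_0)$ from $T_n$ to $T$ you would need $\nu(f^*T,x_0)\leq\liminf_n\nu(f^*T_n,x_0)$. Nothing in your plan supplies this; what would be required is an approximation of $T$ by \emph{more} singular analytic currents with controlled excess, which is not what Demailly's theorem provides. So the reduction to analytic singularities does not close the argument as written.

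Even setting that aside, the residual algebraic claim $\mathrm{ord}_{x_0}(f^{-1}\mathcal{I})\leq C_f\,\mathrm{ord}_{y_0}(\mathcal{I})$ is not a routine consequence of a Lojasiewicz inequality. From $\mathrm{ord}_{y_0}(g)=m$ one gets an upper bound $|g|\lesssim|y-y_0|^m$, hence $|g\circ f|\lesssim|z-x_0|^m$ near $x_0$ by the Lipschitz bound on $f$, which bounds $\mathrm{ord}_{x_0}(g\circ f)$ from \emph{below}, not above. Bounding the pullback order from above requires a lower bound on $|g\circ f|$, which only comes after selecting a generic slice through $x_0$ on which $f$ is finite and $g$ attains its nominal vanishing order; this slicing reduction is essentially the content of Favre's proof, while Kiselman argues via attenuated Lelong numbers and local growth estimates. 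The non-finite case (points of the exceptional locus of $f$ outside $I_f$) and the uniformity of $C_f$ over $x\in X\setminus I_f$ are only gestured at, and both require substantive work. A small terminology point: for a current with analytic singularities along $\mathcal{I}=(g_1,\dots,g_N)$, the Lelong number at $y_0$ is $c\cdot\min_j\mathrm{ord}_{y_0}(g_j)$, the order of the ideal, rather than its (Samuel) multiplicity.
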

   
    \begin{thm} \label{nonnef}
   Let $f:X\dashrightarrow X$ be a dominant meromorphic map. 
 \begin{itemize}
\item[(1)] For every $\alpha \in \psef$ and $p \in X\backslash I_f$
$$\nu(f^*\alpha,p)\leq C_f \nu(\alpha,f(p))$$
where $C_f$ is independent of $\alpha.$ In  particular, $f(E_{nn}(f^*\alpha) \backslash I_f) \subset E_{nn}(\alpha).$ 
\item[(2)] Assume that $f$ is 1-regular and $\lambda:=\lambda_1(f)>1.$ Then $E_{nn}(\alpha) \subset \mathcal{I_1}$ for every $\alpha \in H_\mathcal{N}.$ In particular, $H_\mathcal{N} \subset \mathcal{E}_1.$
\end{itemize}
\end{thm}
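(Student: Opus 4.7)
I plan to prove part (1) via Theorem \ref{F}, with a cohomological correction to handle the non-smoothness of $f^*\omega$, and then deduce part (2) from part (1) using the defining formula $\alpha=\Lambda_\infty(\beta)$ together with Proposition \ref{Bou}.

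For part (1), fix $p\in X\setminus I_f$ and $\epsilon>0$, and set $T_\epsilon:=T^{min}_{\alpha,-\epsilon\omega}$. Theorem \ref{F} applied to the positive current $T_\epsilon+\epsilon\omega$, together with the fact that $\omega$ is smooth and $f^*\omega$ is smooth near $p\notin I_f$, gives $\nu(f^*T_\epsilon,p)\leq C_f\,\nu(T_\epsilon,f(p))\leq C_f\,\nu(\alpha,f(p))$. The subtlety is that $f^*T_\epsilon\geq -\epsilon f^*\omega$ does not directly lie in $(f^*\alpha)[-\epsilon'\omega]$ because $f^*\omega$ is not bounded globally by a multiple of $\omega$. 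To absorb the bad piece, fix a smooth representative $\eta\in\{f^*\omega\}$, use the $\partial\bar\partial$-lemma to write $f^*\omega=\eta+dd^c u$ with $u$ quasi-psh, and consider $S:=f^*T_\epsilon+\epsilon\,dd^c u=f^*(T_\epsilon+\epsilon\omega)-\epsilon\eta$. Then $S$ still represents $f^*\alpha$, is bounded below by $-\epsilon M\omega$ for a uniform constant $M$ with $\eta\leq M\omega$, and has the same Lelong number at $p$ as $f^*T_\epsilon$, because $u$ is smooth near $p\notin I_f$. Hence $\nu(T^{min}_{f^*\alpha,-\epsilon M\omega},p)\leq C_f\,\nu(\alpha,f(p))$; letting $\epsilon\to 0$ in the definition of $\nu(f^*\alpha,p)$ as a supremum yields the claimed inequality, and $f(E_{nn}(f^*\alpha)\setminus I_f)\subset E_{nn}(\alpha)$ is immediate by contrapositive.

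For part (2), write $\alpha=\Lambda_\infty(\beta)$ with $\beta\in\nef$; by 1-regularity $(f^*)^n=(f^n)^*$, so $\Lambda_N\beta=\frac{1}{N}\sum_{n=1}^N\frac{(f^n)^*\beta}{n^{m-1}\lambda^n}$. Since $\beta$ is nef, Proposition \ref{Bou}(i) gives $\nu(\beta,\cdot)\equiv 0$, and part (1) applied to $f^n$ yields $\nu((f^n)^*\beta,p)\leq C_{f^n}\,\nu(\beta,f^n(p))=0$ whenever $p\notin I_{f^n}$. Sub-additivity and homogeneity in Proposition \ref{Bou}(ii) force $\nu(\Lambda_N\beta,p)=0$ for every $p$ outside $\bigcup_{n=1}^N I_{f^n}$, hence for every $p\notin\mathcal{I}_1$ and every $N$; lower semi-continuity (Proposition \ref{Bou}(iii)) then gives $\nu(\alpha,p)\leq\liminf_N\nu(\Lambda_N\beta,p)=0$, proving $E_{nn}(\alpha)\subset\mathcal{I}_1$. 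The inclusion $H_{\mathcal{N}}\subset\mathcal{E}_1$ follows at once: since $\mathcal{I}_1$ is a countable union of analytic sets of codimension at least two, a Baire-category argument on any prime divisor $D\subset X$ supplies $p_0\in D\setminus\mathcal{I}_1$ with $\nu(\alpha,p_0)=0$, giving $\nu(\alpha,D)=\inf_{x\in D}\nu(\alpha,x)=0$. The main obstacle is the non-smoothness of $f^*\omega$ along $I_f$; the $\partial\bar\partial$-correction in part (1) is precisely what lets the pointwise Favre--Kiselman estimate transfer to the minimal-multiplicity estimate, and everything else in part (2) then falls out from the linear structure of $H_{\mathcal{N}}$.
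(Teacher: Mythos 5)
Your proof is correct. The argument for part (2) is essentially the one in the paper: write $\alpha=\Lambda_\infty(\beta)$ with $\beta$ nef, apply part (1) to each $f^n$, and use sub-additivity plus lower semi-continuity of $\nu(\cdot,p)$ to pass to the limit, then observe that $\mathcal{I}_1$ is a countable union of codimension-$\geq 2$ analytic sets and so meets no prime divisor in its entirety.

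Your proof of part (1), however, takes a genuinely different route than the paper's. The paper first handles the case $\alpha\in H^{1,1}_{big}(X,\mathbb{R})$ by invoking Boucksom's identity $\nu(\alpha,x)=\nu(T^{min}_\alpha,x)$ (Proposition \ref{Bou}(iv)) to reduce directly to Theorem \ref{F} applied to $T^{min}_\alpha$, and then passes from big to general psef classes by approximating $\alpha$ with $\alpha+\delta\{\omega\}$ and using lower semi-continuity of $\nu(\cdot,p)$, continuity of $f^*$, and sub-additivity/homogeneity. You instead work directly with the currents $T^{min}_{\alpha,-\epsilon\omega}$ that define $\nu(\alpha,\cdot)$, and resolve the obstruction that $f^*T^{min}_{\alpha,-\epsilon\omega}$ is bounded below only by the non-smooth form $-\epsilon f^*\omega$ by means of a $\partial\bar\partial$-correction: replacing $f^*T^{min}_{\alpha,-\epsilon\omega}$ by $S=f^*(T^{min}_{\alpha,-\epsilon\omega}+\epsilon\omega)-\epsilon\eta$, which lies in $(f^*\alpha)[-\epsilon M\omega]$ and shares its Lelong number at $p$. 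What your route buys is that it bypasses Proposition \ref{Bou}(iv) and the big-to-psef approximation entirely — the only external input is Theorem \ref{F} and the definition of $\nu(\alpha,\cdot)$ as an increasing limit, which is conceptually leaner. What the paper's route buys is avoiding the $\partial\bar\partial$-lemma bookkeeping and the uniform bound $\eta\leq M\omega$; it is shorter once Proposition \ref{Bou} is on the table, as it already is elsewhere in the paper. Both are valid; yours makes the role of the Favre–Kiselman estimate more transparent by applying it directly to the minimizing family rather than through the big-cone detour.
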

\begin{proof}
(1) We assume that $\alpha \in \bigc.$ Let $p \in X\backslash I_f$ and $\Tmin \in~ \alpha$ be a positive closed (1,1) current with minimal singularities. Since $f^*\Tmin \in f^*\alpha$ by definition of $\nu(f^*\alpha,p)$ and by Theorem \ref{F} we have
  \begin{equation*}
 \nu(f^*\alpha,p) \leq \nu(f^*(T_{\alpha}^{min}),p) \leq C_f \nu(\Tmin ,f(p))=C_f\nu(\alpha,f(p))
   \end{equation*}
where the last equality follows from $\alpha \in \bigc$.\\
If $\alpha$ is merely psef then we consider $\alpha_{\delta}:=\alpha+\delta\{ \omega\} \in \bigc$ for $ \delta>0.$ Now, by lower semi-continuity of $\nu(\cdot, p)$ and continuity of $f^*$ we get
$$ \nu(f^*\alpha,p) \leq \displaystyle \liminf_{\delta\rightarrow 0}\nu(f^*\alpha_{\delta},p).$$
Since $\alpha_{\delta}$ is big, by above argument we also have 
$$\nu(f^*\alpha_{\delta},p) \leq C_f\nu(\alpha_{\delta},f(p))$$ 
for $\delta>0.$ 
Then by sub-additivity and homogeneity of $\nu(\cdot,f(p))$ we get
$$\nu(\alpha_{\delta},f(p)) \leq \nu(\alpha,f(p))+\delta\nu\big(\{\omega\},f(p))=\nu(\alpha,f(p)\big)$$
since $\{\omega\}$ is K\"ahler. Therefore, the assertion follows.\\
(2) Let $\alpha \in H_\mathcal{N}$ then $\alpha=\lim_{N\rightarrow \infty}\Lambda_N\beta$ for some $\beta \in \nef$ and $f^*\alpha=\lambda \alpha.$  By part(1) we have
$$\nu(\frac{1}{n^{m-1}\lambda^n}(f^n)^*\beta,p)=0$$
for $p\not\in I_{f^n}.$ It follows from sub-additivity, lower semi-continuity of $\nu(\cdot,p)$ and continuity of $f^*$ that 
$$\nu(\alpha,p)\leq \liminf_{N\rightarrow \infty}\frac{1}{N}\sum_{n=1}^N \frac{1}{n^{m-1}\lambda^n}\nu((f^n)^*\beta,p).$$
Thus, $\nu(\alpha,p)=0$ for $p\not\in \mathcal{I_1}.$
\\ Finaly, since $f$ is 1-regular $ \mathcal{I_1}$ does not contain any divisors hence, $\alpha \in \mathcal{E}_1.$

       \end{proof}
       Without the assumption $\alpha \in H_\mathcal{N}$ the assertion of Theorem \ref{nonnef} (2) is not true in general. The following example was communicated by V. Guedj. 
       \begin{example}
       Let $f:\mathbb{P}^2\rightarrow \mathbb{P}^2$ be a holomorphic map of degree $\lambda \geq2$ with a totally invariant point $p$ i.e. $f^{-1}(p)=p$. We define $\pi:X\rightarrow \mathbb{P}^2$ to be the blow up of $\mathbb{P}^2$ at $p.$ Let $f_X$ denote the induced map and $E:=\pi^{-1}(p)$ denote the exceptional fiber. Then $f_X^*\{E\}=\lambda\{E\}.$ Thus, $f^*_X:H^{1,1}(X,\mathbb{R})\rightarrow H^{1,1}(X,\mathbb{R})$ is given by 
     \begin{equation*}
       f_X^*=
       \begin{bmatrix}
       \lambda & 0 \\
       0 & \lambda
       \end{bmatrix}
       \end{equation*}
       Notice that the class $\{E\} \in \psef$ but $ E\cdot E=-1,$ hence, $\{E\}\not\in H_{\mathcal{N}}=\nef.$ 
       \end{example}
       We also remark that in the above case $\lambda_2(f)=\lambda_1(f)^2.$ It follows from \cite{DF} that if $\dim_{\mathbb{C}}(X)=2$ and $r_1(f)^2>\lambda_2(f)$ then $r_1(f)$ is a simple root of the characteristic polynomial of $f^*.$ \\ 
       \indent The following follows from Proposition \ref{negative}:        
\begin{cor}
Let $f$ and $\alpha \in H_\mathcal{N}$ be as in Theorem \ref{nonnef} (2). Then every $\alpha$-negative curve $C \subset \mathcal{I_1}.$
\end{cor}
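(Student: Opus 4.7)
The plan is to read this off as an immediate concatenation of two results already established in the excerpt, with no genuine new content beyond invoking them.

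First I would apply Proposition \ref{negative} to the irreducible algebraic curve $C$. Since by hypothesis $\alpha \cdot C < 0$, that proposition yields $C \subset E_{nn}(\alpha)$.

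Next I would appeal to Theorem \ref{nonnef}(2). The hypothesis on $f$ and $\alpha \in H_{\mathcal{N}}$ is exactly what is required there, so the theorem gives $E_{nn}(\alpha) \subset \mathcal{I}_1$. Chaining the two inclusions produces $C \subset \mathcal{I}_1$, which is the statement of the corollary.

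There is no real obstacle: everything needed has been done in Proposition \ref{negative} (which handles the fact that $\alpha$-negativity forces membership in the non-nef locus via a log-resolution/positivity computation) and in Theorem \ref{nonnef}(2) (which handles the dynamical part, using $f^*\alpha = \lambda \alpha$, the Favre--Kiselman type estimate in Theorem \ref{F}, and sub-additivity/lower semi-continuity of $\nu(\cdot,p)$ to propagate a vanishing Lelong number outside $\mathcal{I}_1$). If one wanted to strengthen the corollary, the only extra remark worth adding is that since $C$ is irreducible and $\mathcal{I}_1 = \bigcup_n I_{f^n}$ is a countable union of proper analytic subsets, each $C \cap I_{f^n}$ is either finite or all of $C$, so in fact $C \subset I_{f^n}$ for some $n$; but this is not asserted in the statement and needs no further argument.
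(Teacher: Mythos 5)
Your proof is correct and matches the paper's intent exactly: the paper states the corollary as an immediate consequence of Proposition \ref{negative} combined with Theorem \ref{nonnef}(2), which is precisely the two-step chain of inclusions you give. Your closing remark that irreducibility plus the countable-union structure of $\mathcal{I}_1$ forces $C \subset I_{f^n}$ for some single $n$ is a valid and slightly sharper observation, but it is not claimed in the paper.
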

       \begin{cor}\label{S}
Let $f:X\dashrightarrow X$ be a dominant meromorphic map such that $\dim(I_f)=0$ then $f^*(\nef)\subset \nef.$ In particular, if $X$ is a compact K\"ahler surface $f^*$ preserves the nef cone.       
       \end{cor}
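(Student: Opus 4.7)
The plan is to combine Theorem~\ref{nonnef}(1) with the dimension hypothesis on $I_f$ to localize the non-nef locus of $f^*\alpha$ inside a discrete set, then invoke Proposition~\ref{negative} to exclude negative curves, and finally appeal to the K\"ahler surface characterization of nef.

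For $\alpha \in \nef$, Proposition~\ref{Bou}(i) gives $\nu(\alpha, y) = 0$ for every $y \in X$, so $E_{nn}(\alpha) = \emptyset$. Applying Theorem~\ref{nonnef}(1), for every $x \in X \setminus I_f$ I get
\[
\nu(f^*\alpha, x) \leq C_f \, \nu(\alpha, f(x)) = 0,
\]
hence $E_{nn}(f^*\alpha) \subset I_f$. Since $\dim I_f = 0$, the set $I_f$ is a finite collection of points; in particular it contains no prime divisor, so $f^*\alpha \in \mathcal{E}_1$ (nef in codimension one).

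Next I would combine this with Proposition~\ref{negative}: any $f^*\alpha$-negative irreducible curve $C$ would have to satisfy $C \subset E_{nn}(f^*\alpha) \subset I_f$, contradicting that $I_f$ is finite and contains no curves. Hence $f^*\alpha \cdot C \geq 0$ for every irreducible curve $C \subset X$. On a compact K\"ahler surface, combined with $f^*\alpha \in \psef$, the Buchdahl--Lamari criterion (nef iff psef and pairing nonnegatively with all curves) yields $f^*\alpha \in \nef$. The ``in particular'' statement then follows at once: on any compact K\"ahler surface, $I_f$ automatically has codimension $\geq 2$, hence dimension $0$.

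The hard part will be completing the final step in dimension $\geq 3$, where the surface criterion ``psef $+$ nonnegative intersection with all curves $\Rightarrow$ nef'' breaks down. The natural route is to resolve the indeterminacy by a birational modification $\mu \colon \tilde X \to X$ so that $\tilde f = f \circ \mu \colon \tilde X \to X$ becomes holomorphic; then $\tilde f^* \alpha$ is nef on $\tilde X$ by pullback under a holomorphic map, and $f^* \alpha = \mu_*(\tilde f^* \alpha)$. The remaining task is to bound $\nu(f^*\alpha, p)$ for the finitely many $p \in I_f$, which I would attack by applying Demailly regularization (Theorem~\ref{Dem}) to the minimally singular currents representing $f^*\alpha + \epsilon\{\omega\}$, sending $\epsilon \to 0$, and using that the exceptional divisors of $\mu$ lie over a $0$-dimensional set to prevent any singular mass from accumulating at isolated points.
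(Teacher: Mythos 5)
You correctly reduce to $E_{nn}(f^*\alpha) \subset I_f$, which is the paper's first step as well. Your treatment of the surface case via Proposition~\ref{negative} and a curve-positivity criterion is workable --- on a compact K\"ahler surface a psef class meeting every irreducible curve nonnegatively has trivial negative part in its (divisorial) Zariski decomposition, hence is nef --- though the attribution to Buchdahl--Lamari is loose, since that theorem characterizes K\"ahler classes and involves the extra hypothesis $\alpha^2 > 0$. The paper's own remark immediately following the corollary, that $\mathcal{E}_1 = \nef$ on a compact K\"ahler surface, settles the surface case more directly: once $E_{nn}(f^*\alpha)$ is finite it contains no divisor, so $f^*\alpha \in \mathcal{E}_1 = \nef$.

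The substantive gap is the general statement for $\dim_{\mathbb{C}} X \geq 3$, which you explicitly leave open, and the route you sketch does not obviously close. Pushforward $\mu_*$ under a modification does not preserve nefness, so knowing $\tilde f^*\alpha$ is nef upstairs does not give nefness of $f^*\alpha = \mu_*(\tilde f^*\alpha)$ downstairs. What is actually needed is $\nu(f^*\alpha, p) = 0$ at each of the finitely many $p \in I_f$: by Proposition~\ref{Bou}(i), $f^*\alpha$ is nef precisely when $\nu(f^*\alpha,\cdot) \equiv 0$, so even a single isolated point of positive minimal multiplicity obstructs the conclusion. The paper handles all dimensions at once by citing the regularization argument of \cite[Lemma 6.3]{Dem}: a psef class whose non-nef locus is zero-dimensional is in fact nef, because the isolated singularities of the minimal-singularity currents $T^{min}_{f^*\alpha,\epsilon}$ can be removed with arbitrarily small loss of positivity, forcing $E_{nn}(f^*\alpha) = \emptyset$. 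Your proposal identifies the right target --- bounding the Lelong numbers at points of $I_f$ --- but does not supply this tool, and without it the argument is incomplete beyond the surface case.
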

\begin{proof}
Let $\alpha\in \nef$ then it follows from Theorem \ref{nonnef} that $E_{nn}(f^*\alpha) \subset I_f$ and since $\dim(I_f)=0$, $E_{nn}(\alpha)$ is a finite set. Thus, the assetion follows from the regularization argument of \cite[Lemma 6.3]{Dem}.  
\end{proof}      
 If $X$ is a compact K\"ahler surface then the cone $\mathcal{E}_1$ coincides with $\nef$ (\cite{Bou}). Thus, $f^*(\mathcal{E}_1) \subset \mathcal{E}_1$ when $\dim_{\mathbb{C}}X=2$.\\
   However, if $\dim_{\mathbb{C}}X\geq 3$ this is no longer true as the following example shows: \\    

\textbf{Example:} Let
$$J:\mathbb{P}^3 \dashrightarrow \mathbb{P}^3$$
$$J[x_0:x_1:x_2:x_3]=[x_1x_2x_3:x_0x_2x_3:x_0x_1x_3:x_0x_1x_2]$$ 
and $L\in Aut(\mathbb{P}^3)$ given by the matrix
\begin{equation*}
L=
\begin{bmatrix}
1 & 0 & 0 &1  \\
0 & 0 & 1 & 1  \\
0 & 1 & 0 & 1  \\
0 & 1 & 1 &0  \\
\end{bmatrix}
\end{equation*}
We consider the birational map $f=L\circ J: \mathbb{P}^k \rightarrow \mathbb{P}^k $ 
$$ f[x_0:x_1:x_2:x_3]=[(x_0+x_3)x_1x_2:(x_2+x_3)x_0x_1:(x_1+x_3)x_0x_2:(x_1+x_2)x_0x_3]$$
Notice that $f$ has four exceptional hypersurfaces: \{$\Sigma_0,\Sigma_1,\Sigma_2,\Sigma_3$\} where $\Sigma_i:=\{x_i=0\}$ with the following orbit data:
$$ \Sigma_0 \rightarrow e_0:=[1:0:0:0] \rightsquigarrow \Sigma_{\beta}$$ 
$$ \Sigma_1 \rightarrow e_{23}:=[0:0:1:1] \rightsquigarrow l_1 \rightarrow e_{23}$$
$$ \Sigma_2 \rightarrow e_{13}:=[0:1:0:1] \rightsquigarrow l_2 \rightarrow e_{13}$$ 
$$ \Sigma_3 \rightarrow [1:1:1:0]\circlearrowleft$$ 
where $"\rightsquigarrow"$ indicates the total transform under $f$ and $\Sigma_{\beta}=\{[x_0:x_1:x_2:x_3]\in \mathbb{P}^3: 2x_0-x_1-x_2+x_3=0\}$, $l_1\subset \Sigma_1$ is the line passing through $e_0$ and $e_{23}$ and $l_2\subset \Sigma_2$ is the line passing through $e_0$ and $e_{13}$.\\ We define the complex manifold $X$ to be $\mathbb{P}^3$ blown up at $e_0,e_{23}\ \text{and}\ e_{13}$ successively. We denote the exceptional fibers on $e_0, e_{23}\ \text{and}\ e_{1,3}$ by $E_0, E_{23}\ \text{and}\ E_{13}$ respectively and the induced map by $f_X:X\dashrightarrow X$. Then we have the following orbit data
$$\Sigma_0 \rightarrow E_0 \rightarrow \Sigma_{\beta} $$
$$\Sigma_1 \rightarrow E_{23} \rightarrow l_1 \rightarrow \gamma \rightarrow l_1$$
$$\Sigma_2 \rightarrow E_{13} \rightarrow l_2 \rightarrow \sigma \rightarrow \l_2$$
$$ \Sigma_3 \rightarrow [1:1:1:0]\circlearrowleft$$ 
where $\gamma \subset E_{23}$ and $\sigma \subset E_{13}$ are lines which are regular.
It follows that no exceptional hypersurface of $f_X$ is mapped into indeterminacy locus $I_{f_X}$, thus $f_X$ is 1-regular. Now, $\langle H_X,E_0, E_{23},E_{13} \rangle$ forms a basis for $H^{1,1}(X)$ and the the action $f_X^*:H^{1,1}(X)\to H^{1,1}(X)$ with respect to this ordered basis is given by the integer coefficient matrix 
\begin{equation*}
f_X^*=
\begin{bmatrix}
3 & 1 & 1 &1  \\
-2 & 0 & -1 & -1  \\
-1 & -1 & -1 & 0  \\
-1 & -1 & 0 & -1  \\
\end{bmatrix}
\end{equation*}
the charactersitic polynomial of $f_X^*$ is $\chi(x)=x^4-x^3-3x^2+x+2$ and the first dynamical degree of $f,$ the largest root of $\chi(x),$  $\lambda_1(f)=2$ is a simple eigenvalue.\\ \indent 
Let $\widetilde{\Sigma_1} \subset X$ denote the strict transform of $\Sigma_1$. Since $e_0, e_{23} \in \Sigma_1 \subset \mathbb{P}^3$, the class $\alpha:=\{\widetilde{\Sigma_1}\}= H_X-E_0-E_{23}.$ Moreover, $\{\widetilde{\Sigma_1}\}$ is nef in codimension 1. Indeed, for any hyperplane $H\subset \mathbb{P}^3$ containing $e_0$ and $e_{23}$ which does not contain $e_{13}$, $\widetilde{H}$ cohomologous to $\widetilde{\Sigma_1}$ but this is a 1-parameter family of hyperplanes and $[\widetilde{H}] \in \alpha$ defines a positive closed $(1,1)$ current. Thus, we infer that $\nu(\Tmin,x)=0$ for every $x\not\in l_1.$ Hence, by Proposition \ref{Bou} $E_{nn}(\{\widetilde{\Sigma_1}\})\subset l_1.$ Now, since $\widetilde{\Sigma_1} \cdot l_1 =-1$ by Proposition \ref{negative} $l_1 \subset E_{nn}.$ Therefore, $E_{nn}(\{\widetilde{\Sigma_1}\})=l_1.$ On the other hand,
$$f^*(\alpha)=f_X^*(H_X-E_0-E_{23})=H_X-E_0+E_{23}=\{\widetilde{H_0}\}+E_{23}$$
 where $H_0$ is any hyperplane in $\mathbb{P}^3$ containing $e_0$ which does not contain $e_{23}$ nor $e_{13}$. Since this is a 2-parameter family by the same argument above we see that $E_{nn}(H_X-E_0+E_{23}) \subset E_{23}.$ Moreover, for a generic line $\sigma \subset E_{23}$ we have $ E_{23}\cdot \sigma=~-1.$ Therefore, $E_{nn}(H_X-E_0+E_{23})= E_{23}$ hence $f^*(\alpha) \not\in \mathcal{E}_1.$  \\

\section{Green Currents}

\begin{proof}[Proof of Theorem \ref{B}]
We fix a smooth representative $\theta \in \alpha$ and let
\begin{equation}
\Tmin= \theta+ dd^c \vmin \label{a}
\end{equation}
denote the current of minimal singularities. Since $\alpha$  is invariant by $dd^c$-lemma \cite[p 149]{GH}, we can write
\begin{equation}
\fraction{1}{\lambda}f^*\Tmin=\theta+ dd^c\phi_1 \label{b}
\end{equation}
where $\phi_1$ is a qpsh function thus, we can assume that $\phi_1 \leq 0.$ Then by definition of $\vmin$ we have $\phi_1 \leq \vmin.$ \newline
Now, by using invariance of $\alpha$ again we write
\begin{equation}
\fraction{1}{\lambda^2}(f^2)^*\Tmin=\theta+ dd^c\phi_2 \label{c}
\end{equation}
since $f$ is 1-regular  by (\ref{b}) we obtain
\begin{equation}
\fraction{1}{\lambda^2}(f^2)^*\Tmin=\fraction{1}{\lambda}f^*(\fraction{1}{\lambda}f^*\Tmin)= \\
\fraction{1}{\lambda}f^*(\theta)+\fraction{1}{\lambda} dd^c\phi_1\circ f \label{d} 
\end{equation}
and using (\ref{a}) and (\ref{b}) we have
\begin{equation}
\fraction{1}{\lambda}f^*(\theta)=\theta + dd^c(\phi_1-\fraction{1}{\lambda}\vmin \circ f)  \label{e}
\end{equation}  
and substituting (\ref{e}) in (\ref{d}) we obtain 
\begin{equation*}
\fraction{1}{\lambda^2}(f^2)^*\Tmin=\theta+dd^c(\phi_1+\fraction{1}{\lambda}(\phi_1-\vmin) \circ f)
\end{equation*}
therefore, by adding a constant if necessary we can choose 
\begin{equation*}
\phi_2= \phi_1+\fraction{1}{\lambda}(\phi_1-\vmin) \circ f
\end{equation*}
since $\phi_1 \leq \vmin$ we get $\phi_2 \leq \phi_1.$ \newline
Iterating this argument we obtain
\begin{equation*}
\fraction{1}{\lambda^n}(f^n)^*\Tmin=\theta+ dd^c\phi_n 
\end{equation*}
where
\begin{equation*}
\phi_n= \phi_1+ \sum_{j=1}^{n-1}\fraction{1}{\lambda^j}(\phi_1-\vmin)\circ f^j
\end{equation*}
for $n \geq 2$ and $\{\phi_n\}$ is a decreasing sequence of negative qpsh functions. Thus, by Hartogs lemma either $\phi_n$ converges uniformly to $-\infty$ or $\phi_n$ converges to some qpsh function $g.$ We will show that the former case is not possible by using a trick due to Sibony \cite{Si}: \newline
Let $R\in \alpha$ be a positive closed (1,1) current. We consider the Cesaro means of the form
\begin{equation*}
R_N=\fraction{1}{N}\sum_{i=0}^{N-1}\fraction{1}{\lambda^i}(f^i)^*R
\end{equation*}
notice that $R_N$'s are positive closed (1,1) currents and   $\lVert R_N\lVert = \lVert R\lVert $ where $\lVert R \lVert = \int_X R \wedge w^{k-1}$.
Therefore we can extract a subsequence $R_{N_k}\rightarrow S$ for some positive closed current $S\in \alpha$ such that $ f^*S=\lambda S$ and we have
\begin{equation*}
S=\theta+dd^c u
\end{equation*}
where $u$ is a qpsh function. Then, by invariance of S we get 
\begin{equation*}
\fraction{1}{\lambda}(f^*\theta+ dd^c u\circ f)= \theta+dd^c u \label{f}
\end{equation*}  
and by (\ref{e}) we infer
\begin{equation}
\theta+dd^c(\phi_1-\fraction{1}{\lambda}(\vmin+u)\circ f)=\theta+ dd^c u.
\end{equation}
Thus, by adding a constant to $u$ we can assume that 
\begin{equation}
\phi_1-\fraction{1}{\lambda}\vmin \circ f = u-\fraction{1}{\lambda}u \circ f \label{g}
\end{equation}
Pulling back (\ref{g}) by $\fraction{1}{\lambda}f$ and adding the result to (\ref{g}) again we obtain
\begin{equation*}
\phi_n-\fraction{1}{\lambda^n}\vmin \circ f^n =u-\fraction{1}{\lambda^n}u \circ f^n 
\end{equation*}
Since $u$ is qpsh it's bounded from above $u \leq C$ and we have,
\begin{equation*}
\phi_n \geq u + \fraction{1}{\lambda^n}\vmin \circ f^n-\fraction{1}{\lambda^n}C.
\end{equation*}
Thus, we infer that $\phi_n$ converges to $g_{\theta}$ for some qpsh function $g_{\theta}.$  We denote the limit current by $T_{\alpha}=\theta+dd^c g_{\theta}.$ Since it's a limit of positive closed currents belonging to ${\alpha}$, $T_{\alpha} \in \alpha$ is a positive closed current. Moreover, by continuity  of $f^*$ we have $f^*T_{\alpha}=\lambda T_{\alpha}.$ 
Now, we will show that $g_{\theta}$ depends only on the class $\alpha$: \newline 
First of all, since $\fraction{1}{\lambda^{n}} \vmin \circ  f^{n} \rightarrow  0 \ in \ L^1(X)$ we get 
\begin{equation*}
T_{\alpha}=\lim_{n\rightarrow \infty}\fraction{1}{\lambda^{n}} (f^n)^*\Tmin=\lim_{n\rightarrow \infty}\fraction{1}{\lambda^{n}} (f^n)^*\theta
\end{equation*}
Now, if $\theta'$ is another smooth form representing the class $\alpha$ then by $dd^c$-lemma we can write $\theta'=\theta+dd^c\varphi$ where $\varphi$ is a smooth function and since X is compact $\varphi$ is bounded. Therefore, the current $T_{\alpha}=\theta+dd^c g_{\alpha} $ is independent of the choice of the representative form. So far, we have proved the first part. \newline
To prove (1): let $\sigma \in \alpha$ be an invariant current i.e. $f^*\sigma=\lambda \sigma$ and $\sigma=\theta+dd^c \psi$ for some qpsh function $\psi \leq 0$. Then by the same argument as above we obtain
\begin{equation*}
\phi_n-\fraction{1}{\lambda^n}\vmin \circ f^n =\psi-\fraction{1}{\lambda^n}\psi \circ f^n + C\sum_{j=0}^{n-1}\fraction{1}{\lambda^j}
\end{equation*}
thus,  $g_{\alpha} +C_1 \geq \psi.$ \newline
Proof of (2) appears in the literature, see \cite{Si}, \cite{G04}. 
  
\end{proof}

\begin{rem}
 Notice that in the proof of Theorem 4.1 to get the convergence $\lim_{n\rightarrow \infty}\fraction{1}{\lambda^{n}} (f^n)^*\Tmin=T_{\alpha}$ we only need that $\{\fraction{1}{\lambda^{n}}  \vmin \circ f^n\}$ is locally bounded near some point $x \in X.$ 
 \end{rem}
 Theorem \ref{B} was proved by Fornaess and Sibony when $X=\mathbb{P}^k$ (see \cite{FS} and \cite{Si}). More recently, it was proved in \cite{G04} under a cohomological assumption which we replace here by a weaker dynamical assumption. See also \cite{FG},\cite{DF}, \cite{G02},\cite{DG},\cite{DDG} for similar constructions and \cite{N} for the case of non 1-regular meromorphic self maps of $\Bbb{P}^k$. 
 \\  \indent
It seems that the first two assumptions (1-regularity and $\lambda_1>1$) in Theorem \ref{B} are quite natural. Indeed, if $\lambda_1(f)=1$ then it follows from concavity of the function $j \rightarrow \log(\lambda_j)$ and the upper bound for the entropy \cite{DS04} that $h_{top}(f)=0.$ \newline \indent
What about the condition $(\star)$? If $\alpha$ is a K\"ahler class then $\vmin \equiv 0$ thus, $(\star)$ holds. More generally, if $\alpha$ can be represented by a semi-positive form then $\vmin$ is bounded  hence $(\star)$ holds. This is the case for complex homogenous manifolds (i.e. when the group of automorphisms Aut(X) acts transitively on X). Indeed, if X is a complex homogeneous manifold, then every positive closed current T can be approximated by positive smooth forms $\theta_{ \epsilon } \in \{T\}$ (see \cite{H94}). Therefore any psef class can be represented by a semi-positive form. If $X$ is a compact K\"ahler surface and $\lambda_1(f)^2>\lambda_2(f)$, it is well known that $\lambda_1$ is a simple eigenvalue (see \cite{DF}). Furthermore, the corresponding eigenvector $\alpha~ \in \psef$ can be represented by a positive closed $(1,1)$ current with bounded potentials (see \cite{DDG}). Thus, in this case $(\star)$ holds.  \\
\indent However, there are some examples for which the condition $(\star)$ does not hold:
\begin{example}
Let $f:\mathbb{P}^2\rightarrow \mathbb{P}^2$
$$f[x_0:x_1:x_2]=[x_0^2:x_1^2:x_2^2].$$ 
Then $f$ is a holomorphic map with the totally invariant point $p=[1:0:0]$ i.e. $f^{-1}(p)=p.$  Let $\pi:X\rightarrow \mathbb{P}^2$ denote the blow-up of $\mathbb{P}^2$ at $p.$ We let $f_X$ denote the induced map and $E:=\pi^{-1}(p)$ denote the exceptional fiber. Then $f^*_X\{[E]\}=2\{[E]\}$ where $[E]$ denotes the current of integration along $E.$ Notice that $\alpha:=\{[E]\}$ contains only one positive closed $(1,1)$ current namely, $[E].$  Thus, $\Tmin=[E]$ and $E=\{\vmin=-\infty\}.$ We will show that 
$$Vol(\frac{1}{2^n}\vmin \circ f_X^n<-1)\not\rightarrow 0. $$
Indeed, we choose the local coordinates $(s,\eta)$ on $X$ such that $\pi(s,\eta)=[1:s:s\eta]$ Then in these coordinates $E=\{s=0\},$ $\vmin(s,\eta)=\log|s|$ and
$$\frac{1}{2^n}\vmin \circ f_X^n=~\log|s|$$
thus,
$$\{|s|<e^{-1}\}\subset \{\frac{1}{2^n}\vmin \circ f_X^n<-1\}$$
for every $n\in \mathbb{N}$ and the claim follows.
\end{example}
 
 \begin{prop}\label{A}
 Let $f:X\dashrightarrow X$ be a dominant 1-regular meromorphic map and $f^*\alpha=\lambda \alpha$ for some $\alpha \in \psef$ with $\lambda>1.$ Assume that there exists a positive closed $(1,1)$ current $T:=\theta+dd^c\phi$ such that $$T=\lim_{n\rightarrow \infty}\frac{1}{\lambda^n}(f^{n})^*\theta$$ 
 for some (equivalently for every) smooth form $\theta \in \alpha.$ Then $(\star)$ holds.
 \end{prop}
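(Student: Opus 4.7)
The strategy is to apply the Sibony cohomological trick from the proof of Theorem~\ref{B} directly to the invariant current $T$ itself, and then exploit the envelope definition of $\vmin$ to control $\|\lambda^{-n}\vmin\circ f^n\|_{L^1}$ by an analogous expression involving a potential of $T$.

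By continuity of $f^*$ on positive closed $(1,1)$-currents, the hypothesis gives $f^*T=\lim_n \lambda^{-n}(f^{n+1})^*\theta=\lambda T$, so $T$ is $f^*$-invariant. Write $T=\theta+dd^c\phi$ and normalize the additive constant in $\phi$ so that $\sup_X\phi=0$, whence $\phi\leq 0$. Being then a $\theta$-psh function bounded above by $0$, $\phi$ is dominated pointwise by the envelope $\vmin$; hence $|\vmin|\leq|\phi|$ and
\[
\Bigl\|\tfrac{1}{\lambda^n}\vmin\circ f^n\Bigr\|_{L^1(X)} \;\leq\; \Bigl\|\tfrac{1}{\lambda^n}\phi\circ f^n\Bigr\|_{L^1(X)}.
\]
So $(\star)$ is reduced to showing $\lambda^{-n}\phi\circ f^n\to 0$ in $L^1(X)$.

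Next, I would run the Sibony step in the proof of Theorem~\ref{B} (equations~(e)--(g) there) with the invariant current $S=T$ and its potential $u=\phi$, absorbing the resulting additive constant into the remaining freedom in the normalization of $\phi$. Iterating yields, for every $n\geq 1$, the key identity
\[
\phi_n - \tfrac{1}{\lambda^n}\vmin\circ f^n \;=\; \phi - \tfrac{1}{\lambda^n}\phi\circ f^n,
\]
where $\phi_n$ is the potential from the proof of Theorem~\ref{B}. Rearranging, $\phi_n=\phi+\tfrac{1}{\lambda^n}(\vmin-\phi)\circ f^n\geq\phi$ pointwise; combined with the already-established monotonicity of $\phi_n$, this forces $\phi_n\to g_\theta$ monotonically in $L^1$ for some quasi-psh $g_\theta$ with $\phi\leq g_\theta\leq\vmin\leq 0$, and $T_\alpha:=\theta+dd^c g_\theta$ is a further $f^*$-invariant positive closed $(1,1)$-current in $\alpha$.

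Both $T$ and $T_\alpha$ being $f^*$-invariant, $h:=g_\theta-\phi\geq 0$ satisfies $dd^c(h\circ f-\lambda h)=0$, so $h\circ f=\lambda h+b$ for some constant $b$; iterating gives $h\circ f^n=\lambda^n h+b(\lambda^n-1)/(\lambda-1)$. An iteration-plus-integration argument against the probability measure $dV$, using $0\leq h\leq -\phi\in L^1(X)$, forces $h\equiv-b/(\lambda-1)$ a.e., and then evaluating the pointwise bound $h\leq -\phi$ at any $x_0$ where $\phi$ attains its supremum $0$ yields $h\equiv 0$, i.e.\ $g_\theta=\phi$ and $T_\alpha=T$. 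Substituting $g_\theta=\phi$ into the key identity and applying monotone convergence gives $\tfrac{1}{\lambda^n}(\vmin-\phi)\circ f^n\to 0$ in $L^1$; combined with the pointwise inequality $\tfrac{1}{\lambda^n}\vmin\circ f^n\geq\tfrac{1}{\lambda^n}\phi\circ f^n$ and the first-paragraph comparison $|\vmin|\leq|\phi|$, a standard qpsh $L^1$-estimate for the sequence $\tfrac{1}{\lambda^n}\phi\circ f^n$ (which is $\tfrac{1}{\lambda^n}(f^n)^*\theta$-psh with the latter form converging weakly to $T$) then forces $\tfrac{1}{\lambda^n}\phi\circ f^n\to 0$ in $L^1$, establishing $(\star)$. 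The main obstacle I expect is justifying that an $L^1$-solution $h\geq 0$ of $h\circ f=\lambda h+b$ on a compact K\"ahler manifold is constant: this requires careful handling of the meromorphic push-forward $(f^n)_*dV$, whose density can be singular along $f^n$-images of the indeterminacy locus $I_{f^n}$, in order to pass the $L^1$-mass bound on $h\leq -\phi$ through the iterated identity.
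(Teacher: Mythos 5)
Your first two steps match the paper's: normalizing so $\phi\leq 0$, comparing $\phi\leq\vmin$, and running the Sibony step with $u=\phi$ to get the iterated identity $\phi=\gamma_n+\lambda^{-n}\phi\circ f^n$ (where $\gamma_n=\sum_{i<n}\lambda^{-i}\gamma\circ f^i$ is the potential of $\lambda^{-n}(f^n)^*\theta-\theta$). But at exactly this point the paper is done: the hypothesis $\lambda^{-n}(f^n)^*\theta\to T$ is read as $\gamma_n\to\phi$ in $L^1$, and then $\lambda^{-n}\phi\circ f^n=\phi-\gamma_n\to 0$ immediately. You instead embark on a long detour — build the auxiliary current $T_\alpha=\theta+dd^cg_\theta$ from $\phi_n$, try to prove $g_\theta=\phi$ by analyzing the cohomological equation $h\circ f=\lambda h+b$ for $h=g_\theta-\phi\geq 0$, and then back out the result. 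This detour is not only unnecessary, it contains genuine gaps, and you flag the main one yourself.

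Concretely, two things fail in the detour. First, the step ``$h\geq 0$ with $h\circ f=\lambda h+b$ forces $h$ constant'' is not a routine maximum-principle argument here: $h$ is a difference of qpsh functions and need not be bounded above, so you cannot compare suprema; and the ``integrate against $dV$'' route requires a quantitative control of $(f^n)_*dV$ that you correctly identify as missing and do not supply. Dividing the iterated relation by $\lambda^n$ and letting $n\to\infty$ presupposes $\lambda^{-n}h\circ f^n\to 0$, which is essentially what you are trying to prove. Second, even granting $g_\theta=\phi$, your closing sentence does not close: from $\phi_n-\phi=\lambda^{-n}(\vmin-\phi)\circ f^n\to 0$ you only learn that the difference $\lambda^{-n}\vmin\circ f^n-\lambda^{-n}\phi\circ f^n$ tends to $0$ in $L^1$; since both terms are nonpositive, this does not force either one to tend to $0$, and the appeal to a ``standard qpsh $L^1$-estimate'' is circular because the reference forms $\lambda^{-n}(f^n)^*\theta$ are not uniformly bounded below by a fixed smooth form. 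The fix is to drop $T_\alpha$ entirely and instead note, as the paper does, that the hypothesized convergence of $\lambda^{-n}(f^n)^*\theta$ to $T$ is precisely the statement $\gamma_n\to\phi$ in $L^1$ (the normalization constant having been absorbed into $\gamma$ at the start), which combined with the iterated identity gives $(\star)$ directly.
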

\begin{proof}
Since $\theta\in \alpha$ is a smooth form we have $T=\theta+dd^c\phi$ where $\phi$ is a qpsh function and $\phi\leq \vmin + O(1).$ It's enough to show that $\frac{1}{\lambda^n}\phi \circ f^n \rightarrow 0$ in $L^1(X).$  Let
$$\frac{1}{\lambda}f^*\theta=\theta+dd^c\gamma$$
for some $\gamma \in L^1(X)$. By continuity of $f^*$ we have $f^*T=\lambda T.$ Thus, by adding a constant to $\gamma$ if necessary we may assume that 
$$\gamma+\frac{1}{\lambda}\phi\circ f=\phi$$
then we have 
$$\phi=\sum_{i=0}^{n-1}\frac{1}{\lambda^i}\gamma\circ f^i+ \frac{1}{\lambda^n}\phi\circ f^n$$
and by assumption $\sum_{i=0}^{n-1}\frac{1}{\lambda^i}\gamma\circ f^i\rightarrow \phi$ in $L^1(X)$ hence, $\frac{1}{\lambda^n}\phi \circ f^n \rightarrow 0$ in $L^1(X).$ 
 
\end{proof}

Thus, it follows from the following theorem that if $\lambda_1(f)>1$ is simple and $\alpha$ is nef then condition $(\star)$ holds.
\begin{thm}\cite{DG} \label{DG}
Let $f:X\dashrightarrow X$ be a dominant meromorphic map. Assume that $f$ is 1-regular and $\lambda:=\lambda_1(f) > 1$ is the unique simple eigenvalue and  $\alpha \in H^{1,1}(X,\mathbb{R})$ is the corresponding eigenvector. If $\alpha$ is nef then for every smooth form $\theta \in \alpha$ we have the limit $T_{\alpha}:=\lim_{n\rightarrow \infty} \frac{1}{\lambda^n}(f^n)^*(\theta)$ which depends only on the class $\alpha$. Moreover $T_{\alpha}$ is a positive closed $(1,1)$ current satisfying $f^*T_{\alpha}=\lambda T_{\alpha}.$
 \end{thm}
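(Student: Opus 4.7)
My approach follows the scheme in the proof of Theorem \ref{B}, with nefness of $\alpha$ replacing the hypothesis $(\star)$ to obtain the required control on the iterated potentials, and simplicity of $\lambda$ ensuring uniqueness of the limit.

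Fix a smooth representative $\theta \in \alpha$. Since $f^*\alpha = \lambda \alpha$ and $f$ is 1-regular, the $dd^c$-lemma gives a qpsh function $u$, unique up to additive constant, with $\tfrac{1}{\lambda} f^*\theta = \theta + dd^c u$; normalize $\sup_X u = 0$ so that $u \leq 0$. Iterating exactly as in the proof of Theorem \ref{B}, we obtain
$$\frac{1}{\lambda^n}(f^n)^*\theta = \theta + dd^c U_n, \qquad U_n := \sum_{i=0}^{n-1} \frac{u \circ f^i}{\lambda^i},$$
so convergence of $\tfrac{1}{\lambda^n}(f^n)^*\theta$ is equivalent to $L^1$ convergence of the qpsh functions $U_n$. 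Since $u \leq 0$ and $\sum 1/\lambda^i < \infty$, the sequence $\{U_n\}$ is uniformly bounded above.

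The main task is to show $U_n$ does not slide off to $-\infty$ in $L^1$; here nefness is decisive. For every $\epsilon > 0$ the class $\alpha + \epsilon\{\omega\}$ is K\"ahler, so pick a K\"ahler representative $\omega_\epsilon = \theta + \epsilon \omega + dd^c \psi_\epsilon$ with $\psi_\epsilon$ smooth. Running the same iteration with $\omega_\epsilon$ in place of $\theta$ produces positive closed currents
$$\frac{1}{\lambda^n}(f^n)^*\omega_\epsilon = \theta + \frac{\epsilon}{\lambda^n}(f^n)^*\omega + dd^c\Bigl(U_n + \tfrac{1}{\lambda^n}\psi_\epsilon \circ f^n\Bigr) \geq 0.$$
Since $f$ is 1-regular, the masses $\tfrac{1}{\lambda^n}\int_X (f^n)^*\omega \wedge \omega^{k-1}$ stay bounded as $n \to \infty$, and $\tfrac{1}{\lambda^n}\psi_\epsilon \circ f^n \to 0$ uniformly. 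The positivity on the left together with bounded total mass of the currents $\tfrac{1}{\lambda^n}(f^n)^*\omega_\epsilon$ gives, by standard compactness of qpsh functions bounded above with bounded $L^1$ norm, a uniform $L^1$ bound on $U_n$. Hence a subsequence $U_{n_k}$ converges in $L^1$ to a qpsh function $U_\infty$, and $T_\alpha := \theta + dd^c U_\infty$ is a positive closed current in class $\alpha$.

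Full convergence, not merely subsequential, comes from simplicity of $\lambda$: any two subsequential limits $T, T' \in \alpha$ satisfy $T - T' = dd^c w$ for some $w \in L^1(X)$, and invariance $f^* T = \lambda T = f^* T'$, valid by continuity of $f^*$ on positive closed $(1,1)$ currents, yields $w \circ f = \lambda w$ up to a constant. Since $\lambda > 1$ is the unique eigenvalue of largest modulus of $f^*$ and is simple, this forces $w$ to be constant, so $T = T'$ and the whole sequence converges to $T_\alpha$. Independence from the smooth representative of $\alpha$ then follows because two choices differ by $dd^c$ of a smooth bounded function, whose $\tfrac{1}{\lambda^n}$-scaled pullback goes to zero in $L^1$; invariance $f^* T_\alpha = \lambda T_\alpha$ follows by passing to the limit in $\tfrac{1}{\lambda^{n+1}}(f^{n+1})^*\theta = \tfrac{1}{\lambda}f^*\bigl(\tfrac{1}{\lambda^n}(f^n)^*\theta\bigr)$. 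The chief obstacle is the uniform $L^1$ bound on $U_n$: without nefness this bound can fail, as the example $f[x_0:x_1:x_2]=[x_0^2:x_1^2:x_2^2]$ above illustrates, and this is precisely why $(\star)$ is imposed as a hypothesis in Theorem \ref{B}.
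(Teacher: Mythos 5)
The paper does not reprove Theorem \ref{DG} (it is cited from \cite{DG}), so there is no internal proof to match against; instead the relevant comparison is with the paper's own Theorem \ref{green}, which proves the analogous statement under a weaker intersection hypothesis but assumes $X$ projective. Measured against that, your argument has two genuine gaps.

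The first and decisive one is the opening claim that the $dd^c$-lemma gives a \emph{qpsh} function $u$ with $\tfrac{1}{\lambda}f^*\theta=\theta+dd^cu$, which you then normalize to $u\leq 0$. The $dd^c$-lemma only gives $u\in L^1(X)$. Since $\theta$ is not a positive form, $f^*\theta=(\pi_1)_*(\pi_2)^*\theta$ is merely a difference of positive closed currents, and on the graph one has $\pi_1^*f^*\theta=\pi_2^*\theta+D$ with $D$ a $\pi_1$-exceptional $\mathbb{R}$-divisor whose coefficients can a priori be negative; $u\circ\pi_1$ has a $\log|s_i|$-singularity of the corresponding sign along each component of $D$, so $u$ is bounded above exactly when $D$ is effective. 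Proving $D\geq 0$ is precisely where nefness enters, via a Negativity-Lemma argument: this is exactly the content of the paper's Lemma 5.3 and Proposition \ref{bdd}, which are stated and used only for $X$ \emph{projective}. For general compact K\"ahler $X$ (the setting of Theorem \ref{DG}) one needs a substitute argument, and you give none; the step ``normalize $\sup_X u=0$'' is therefore unjustified, and with it the downward monotonicity of $U_n$ and the bound $U_n\leq 0$ both collapse.

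The second gap is in the compactness step producing the $L^1$ lower bound. You write that ``positivity on the left together with bounded total mass $\ldots$ gives, by standard compactness of qpsh functions bounded above with bounded $L^1$ norm, a uniform $L^1$ bound on $U_n$,'' but this is circular: the standard compactness statement takes the $L^1$ bound as a hypothesis. What must actually be ruled out is $U_n\to-\infty$ uniformly, and bounded mass of $\tfrac{1}{\lambda^n}(f^n)^*\omega_\epsilon$ alone does not do this, because the inequality $\theta+\tfrac{\epsilon}{\lambda^n}(f^n)^*\omega+dd^cV_n\geq 0$ has a non-smooth, $n$-dependent background form, so $V_n$ is not $\gamma$-psh for any fixed smooth $\gamma$. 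The paper's own treatment (in the proofs of Theorems \ref{B} and \ref{green}) handles this with Sibony's trick: it extracts an invariant positive closed current $S=\theta+dd^cu_S$ from Ces\`aro averages and uses the telescoping identity to bound $\phi_n$ (resp.\ $\gamma_n$) from below by $u_S+\tfrac{1}{\lambda^n}\vmin\circ f^n-O(\lambda^{-n})$. Some device of this kind is needed; the appeal to ``standard compactness'' by itself is not a proof. Your uniqueness/invariance discussion at the end is essentially fine (and, once $u\leq 0$ is known, even redundant, since $\{U_n\}$ is then monotone), but the two earlier steps are the ones that carry the real content of the theorem and neither is established.
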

 \section{An Algebraic Criterion}
The following result is a consequence Hodge index theorem \cite[Lemma 3.39]{KM}:
\begin{lem}[Negativity Lemma]
Let $\pi:Z \dashrightarrow Y$ be a proper birational morphism between normal projective varieties $Z$ and $Y$. Let $-E$ be a $\pi$-nef $\mathbb{R}$-divisor on $Z$ (that is $(-E) \cdot C\geq 0$ for every $\pi$-exceptional curve $C$). Then
\begin{itemize} 
\item[(1)] $E$ is effective if and only if $\pi_*E$ is effective
\item[(2)] Assume that $E$ is effective. Then for every $y\in Y$ either $\pi^{-1}(y) \subset E$ or $supp(E) \cap \pi^{-1}(y)=\emptyset.$
\end{itemize}
\end{lem}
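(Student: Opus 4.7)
The plan is to prove part (1) first, reducing to the case of surfaces where Mumford's classical negative-definiteness of the intersection form on contracted curves applies, and then to deduce part (2) by a short connectedness argument.

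For part (1), the forward implication is immediate since pushforward of an effective divisor under a proper morphism is effective. For the converse, I would write $E = E_+ - E_-$ with $E_+, E_-$ effective $\mathbb{R}$-divisors having no common components, and show $E_- = 0$. The first observation is that $E_-$ must be $\pi$-exceptional: if some prime component $D$ of $E_-$ were not contracted, then birationality of $\pi$ forces $\pi(D)$ to be a prime divisor, and no component of $E_+$ can map birationally onto $\pi(D)$, so the coefficient of $\pi(D)$ in $\pi_* E$ would be strictly negative, contradicting $\pi_* E \geq 0$. Next, I would reduce to a surface by cutting with very general very ample hyperplane sections $H_1, \dots, H_{n-2}$ on $Y$. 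A Bertini argument on the normal base $Y$, combined with birationality of $\pi$, yields a normal surface $S := \pi^{-1}(H_1) \cap \dots \cap \pi^{-1}(H_{n-2}) \subset Z$ fitting into a birational morphism $\pi_S : S \to T := H_1 \cap \dots \cap H_{n-2}$, such that $-E|_S$ remains $\pi_S$-nef and $(\pi_S)_*(E|_S)$ remains effective. By choosing the $H_i$ to meet $\pi(D)$ in dimension one for a given $\pi$-exceptional component $D \subset Z$, the coefficient of $D$ in $E_-$ is detected by the coefficient of a $\pi_S$-exceptional prime divisor in $E|_S$.

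The surface case then invokes Mumford's theorem: the intersection pairing on the $\mathbb{Q}$-span of $\pi_S$-exceptional prime divisors is negative definite. Writing $E|_S = F_+ - F_-$ with $F_-$ supported on $\pi_S$-exceptional curves, two computations collide. From $\pi_S$-nefness of $-E|_S$ applied componentwise, $E|_S \cdot F_- = \sum a_i (E|_S \cdot E_i) \leq 0$ where $F_- = \sum a_i E_i$ with $a_i > 0$; on the other hand, expanding gives $E|_S \cdot F_- = F_+ \cdot F_- - F_-^2$, in which $F_+ \cdot F_- \geq 0$ by disjoint supports and $F_-^2 < 0$ by negative-definiteness whenever $F_- \neq 0$, forcing $E|_S \cdot F_- > 0$. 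The contradiction gives $F_- = 0$, and varying the hyperplanes through every prime component of $E_-$ yields $E_- = 0$.

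For part (2), given $E \geq 0$ and $y \in Y$, the fiber $F := \pi^{-1}(y)$ is connected by Zariski's Main Theorem applied to the proper birational morphism $\pi$ with $Y$ normal. If $\mathrm{supp}(E) \cap F$ were a proper nonempty subset of $F$, connectedness would produce an irreducible projective curve $C \subset F$ meeting both $\mathrm{supp}(E)$ and $F \setminus \mathrm{supp}(E)$; effectivity of $E$ together with $C \not\subset \mathrm{supp}(E)$ then forces $E \cdot C > 0$, while $C$ being contracted by $\pi$ gives $(-E) \cdot C \geq 0$ by hypothesis, a contradiction. The main obstacle I anticipate is the surface-reduction step in part (1): making rigorous that the generic hyperplane sections genuinely produce a normal surface $S$ on which restricted $\pi$-nefness and effectivity of the pushforward are preserved, and that every $\pi$-exceptional prime divisor of $Z$ is faithfully detected on some such $S$, so that the surface argument can be leveraged to zero out the corresponding coefficient in $E_-$.
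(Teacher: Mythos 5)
The paper does not prove this lemma; it cites \cite[Lemma 3.39]{KM} with the remark that it follows from the Hodge index theorem, so there is no internal argument to compare against. Your sketch follows the standard route to that reference. Part (2) is correct: $\pi^{-1}(y)$ is connected by Zariski's main theorem since $Y$ is normal and $\pi$ is proper birational, and if $\mathrm{supp}(E)\cap\pi^{-1}(y)$ were a nonempty proper subset, connectedness yields a contracted curve $C$ meeting $\mathrm{supp}(E)$ without lying in it, whence $E\cdot C>0$ against $(-E)\cdot C\geq 0$. In part (1), the forward direction and the observation that $E_-$ is necessarily $\pi$-exceptional are both correct, as is the surface-case ledger $0\geq E|_S\cdot F_- = F_+\cdot F_- - F_-^2>0$ via negative definiteness of the exceptional intersection form.

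The obstacle you flag in the reduction to surfaces is, however, a real gap, and it is worth pinning down where it sits. Very general $H_1,\dots,H_{n-2}$ on $Y$ only detect $\pi$-exceptional divisors $D$ with $\dim\pi(D)=n-2$: if $\dim\pi(D)<n-2$ --- for instance the exceptional $\mathbb{P}^2$ over a blown-up point in a threefold --- then a very general $T=H_1\cap\cdots\cap H_{n-2}$ misses $\pi(D)$ altogether, so $D\cap\pi^{-1}(T)=\emptyset$ and the coefficient of $D$ in $E_-$ is invisible on $S$. Your parenthetical remedy of ``choosing the $H_i$ to meet $\pi(D)$ in dimension one'' cannot coexist with ``very general,'' and is also mis-calibrated: for the surface argument you need $\pi(D)\cap T$ to be finite and nonempty (dimension zero), so that $D\cap S$ is a curve that $\pi_S$ contracts. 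The repair is to treat one prime component $D$ of $E_-$ at a time, choose each $H_i$ from the subsystem of very ample members through a general point of $\pi(D)$, replace the scheme-theoretic preimage $\pi^{-1}(T)$ (which is typically reducible, carrying entire positive-dimensional fibers of $\pi$) by the strict transform of $T$, and then verify that $T$ and this strict transform are normal and that every component of $E_-$ meeting the strict transform is still contracted over $T$. None of these checks is deep, but none is automatic, and together they are exactly what separates your sketch from a proof.
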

 \begin{prop} \label{p}
Let $X$ be a projective manifold and $f:X\dashrightarrow X$ be a dominant rational map and $\omega$ be a K\"ahler form. If $p\in I_f$ then $\nu(f^*\omega,p)>0.$
\end{prop}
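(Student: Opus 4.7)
My plan is to resolve the indeterminacy of $f$, use the Negativity Lemma to produce an effective $\pi$-exceptional divisor $D$ whose support contains every fiber over $I_f$, and then transfer the resulting positive Lelong number back to $X$ via Theorem \ref{F}. By Hironaka, choose a proper birational morphism $\pi:\widetilde X\to X$ such that $\tilde f:=f\circ\pi:\widetilde X\to X$ is holomorphic. On $\widetilde X\setminus\pi^{-1}(I_f)$ the map $f\circ\pi$ equals $\tilde f$, so the two positive closed $(1,1)$-currents $\pi^*(f^*\omega)$ and $\tilde f^*\omega$ agree there. Their difference is then a real closed $(1,1)$-current supported on the divisor $\pi^{-1}(I_f)$ and cohomologous to zero, which I will write as $[D]$ for an $\mathbb R$-divisor $D$; every component of $D$ maps into $I_f$ (codimension $\geq 2$), so $D$ is $\pi$-exceptional and $\pi_*D=0$.

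I next check that $-D$ is $\pi$-nef. For any $\pi$-exceptional irreducible curve $C$ the projection formula gives $\pi^*(f^*\omega)\cdot C=f^*\omega\cdot\pi_*C=0$, hence
$$D\cdot C=-\tilde f^*\omega\cdot C=-\int_C\tilde f^*\omega\leq 0$$
since $\tilde f^*\omega\geq 0$. Because $\pi_*D=0$ is (trivially) effective, part (1) of the Negativity Lemma yields $D\geq 0$.

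Fix $p\in I_f$. By definition $f(p)=\pi_2(\pi_1^{-1}(p)\cap\Gamma_f)$ has positive dimension, and so does $\tilde f(\pi^{-1}(p))$. I pick an irreducible algebraic curve $C\subset\pi^{-1}(p)$ whose image $\tilde f(C)$ is a curve; then
$$D\cdot C=-\int_C\tilde f^*\omega=-\deg(\tilde f|_C)\int_{\tilde f(C)}\omega<0$$
because $\omega$ is K\"ahler. In particular $C\cap\mathrm{supp}(D)\neq\emptyset$, so part (2) of the Negativity Lemma forces $\pi^{-1}(p)\subset\mathrm{supp}(D)$.

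Finally, pick any $x\in\pi^{-1}(p)$; it lies on a component of $D$ with positive coefficient, so $\nu([D],x)>0$. Since $\tilde f^*\omega$ is smooth,
$$\nu(\pi^*(f^*\omega),x)=\nu(\tilde f^*\omega,x)+\nu([D],x)>0.$$
Applying Theorem \ref{F} to the holomorphic map $\pi$ and the current $f^*\omega$ yields $\nu(\pi^*(f^*\omega),x)\leq C_\pi\,\nu(f^*\omega,p)$, whence $\nu(f^*\omega,p)>0$. The main technical hurdle is passing from the cohomological identity $\{\pi^*(f^*\omega)\}=\{\tilde f^*\omega\}$ to the divisorial identity $\pi^*(f^*\omega)=\tilde f^*\omega+[D]$ with $D\geq 0$; once that is in hand, the Kähler assumption enters precisely through $D\cdot C<0$ on a curve in the positive-dimensional fiber $\pi^{-1}(p)$, and the Lelong-number transfer is automatic because $\pi$ is a morphism.
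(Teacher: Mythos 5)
Your proof is correct and follows essentially the same route as the paper's: resolve $f$ (the paper uses the desingularized graph $\widetilde{\Gamma_f}$ with projections $\pi_1,\pi_2$, which is the same as your $\pi,\tilde f$), decompose $\pi^*(f^*\omega)=\tilde f^*\omega+[D]$ with $D$ a $\pi$-exceptional divisor, show $D\geq 0$ via the Negativity Lemma, use a curve $C\subset\pi^{-1}(p)$ not contracted by $\tilde f$ and the K\"ahler assumption to get $D\cdot C<0$ (hence $\pi^{-1}(p)\subset\mathrm{supp}(D)$), and transfer the positive Lelong number down to $X$ via Theorem \ref{F}. The only minor slip is the parenthetical claim that $\pi^*(f^*\omega)$ and $\tilde f^*\omega$ are cohomologous; this need not hold and is not needed, since the support theorem already shows the difference is a divisor, as you in fact use.
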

\begin{proof}
 We consider the pull-backs
\begin{equation} \label{l}
(\pi_1)^*f^*\omega=(\pi_1)^*(\pi_1)_*(\pi_2)^*\omega=(\pi_2)^*\omega+ E  
\end{equation}
where $E$ is a (possibly trivial) $\pi_1$-exceptional divisor. We claim that $E$ is a non-trivial effective divisor. Indeed, for any $\pi_1$-exceptional curve $C$ we have 
$$(\pi_1)^*f^*\omega \cdot C=0$$
and since $\omega$ is a K\"ahler form we get
$$0\leq \langle \omega, \pi_2(C)\rangle=(-E) \cdot C$$
Thus, by Negativity lemma we conclude that $E$ is effective.\\ \indent
 Let us fix $p\in I_f.$ Since $\dim(f(p))\geq 1,$ there exists a curve $C\subset \pi^{-1}_1(p)$ such that $C\not \subset \mathcal{E}(\pi_2)$ and by above argument $ E \cdot C<0$ hence $E$ is non-trivial. Moreover, $\pi_1^{-1}(p)\subset E$. \\ \indent
 Since the left hand side of (\ref{l}) defines a positive closed $(1,1)$ current we infer that $\nu(\pi_1^*f^*\omega,q)=mult_q(E)$ for any $q\in \pi_1^{-1}(p)$ and
 $$0<\nu((\pi_1)^*f^*\omega,q)$$  
 Now, it follows from Theorem \ref{F} that
 $$0<\nu((\pi_1)^*f^*\omega,q) \leq C\ \nu(f^*\omega,p)$$
 where $C>0$ is a constant which does not depend on $f^*\omega$. 
 \end{proof}
The following result is well-known when $X=\Bbb{P}^k$ \cite{FS} or $X$ is a compact K\"ahler surface \cite{DF}. To our knowledge it is new in this generality:
\begin{thm}\label{t}
Let $X$ be a projective manifold and $f:X\dashrightarrow X$ be a dominant rational map. Then the following are equivalent:
\begin{itemize}
\item[(i)] $(f^n)^*T=(f^*)^nT$ for every $T \in \mathcal{T}(X)$ and $n=1,2,\dots$
\item[(ii)] $(f^*)^n\omega=(f^n)^*\omega$ for every K\"ahler form $\omega$ on $X$ and $n=1,2,\dots$
\item[(iii)] f is 1-regular
\item[(iv)] There is no exceptional hypersurface $H$ and $n\in \mathbb{N}$ such that $f^n(H-I_{f^n})\subset~I_f$.

\end{itemize}
\end{thm}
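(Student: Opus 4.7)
The plan is to prove the cyclic chain (i) $\Rightarrow$ (ii) $\Rightarrow$ (iii) $\Rightarrow$ (iv) $\Rightarrow$ (i). The first two implications are formal: (i) $\Rightarrow$ (ii) is just the specialization of the current-level identity to K\"ahler forms, and (ii) $\Rightarrow$ (iii) uses that, $X$ being projective, the K\"ahler cone $\mathcal{K}$ is open in $H^{1,1}(X,\mathbb{R})$, so every class is a difference of two K\"ahler classes; taking de Rham classes in (ii) and extending by $\mathbb{R}$-linearity yields $(f^n)^* = (f^*)^n$ on $H^{1,1}(X,\mathbb{R})$.

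For (iii) $\Rightarrow$ (iv), I argue contrapositively. Suppose $H$ is an exceptional hypersurface of $f$ and $n \geq 1$ satisfies $f^n(H - I_{f^n}) \subset I_f$. Fix a K\"ahler form $\omega$. For a generic $x \in H$, the fact that $H \cap I_{f^n}$ has codimension $\geq 1$ in $H$ (since $\mathrm{codim}\, I_{f^n} \geq 2$) together with $x \notin I_{f^n}$ forces $f^j(x) \notin I_f$ for $0 \leq j \leq n-1$, while by hypothesis $f^n(x) \in I_f$. Iterating Theorem \ref{F} along this orbit and applying Proposition \ref{p} at the final step gives
\[
\nu\bigl((f^*)^{n+1}\omega, x\bigr) \ \geq \ \nu(f^*\omega, f^n(x)) \ > \ 0,
\]
so the Siu decomposition of $(f^*)^{n+1}\omega$ contains a component $c[H]$ with $c > 0$. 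On the other hand, $(f^{n+1})^*\omega$ is the trivial extension of a smooth form through the codimension $\geq 2$ set $I_{f^{n+1}}$, so its Siu decomposition carries no divisorial component. A Siu-decomposition comparison, using uniqueness of positive closed $(1,1)$ extensions through codimension $\geq 2$ subsets to identify the residual parts on the common open locus where both currents are smooth, yields the current-level identity
\[
(f^*)^{n+1}\omega - (f^{n+1})^*\omega \ = \ \sum_i c_i [H_i]
\]
with $c_i > 0$ and $H$ among the effective divisors $H_i$. But (iii) forces this difference to lie in the zero cohomology class; pairing with $\{\omega\}^{k-1}$ gives a contradiction since $\sum_i c_i \mathrm{vol}_\omega(H_i) > 0$ and $X$ is projective.

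For (iv) $\Rightarrow$ (i), I proceed by induction on $n$, reducing to $f^*((f^n)^*T) = (f^{n+1})^*T$ for every positive closed $(1,1)$ current $T$. The same Siu-decomposition comparison, run in reverse, produces a current-level identity
\[
f^*((f^n)^*T) - (f^{n+1})^*T \ = \ \sum_i c_i [H_i],
\]
with $c_i \geq 0$ and each $H_i$ an irreducible hypersurface in $\bigcup_{0\leq j\leq n}(f^j)^{-1}(I_f)$, the divisorial contributions inherited from $T$ matching on both sides. Each such $H_i$ must be an exceptional hypersurface of some iterate of $f$ sent into $I_f$; tracing back to the first iterate at which this occurs, $H_i$ reduces to an exceptional hypersurface of $f$ itself realizing the configuration excluded by (iv). Hence the difference vanishes.

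The main technical obstacle is the current-level Siu-decomposition comparison underlying both nontrivial implications. One must carefully show that the residual (non-divisorial) part of $(f^*)^{n+1}\omega$ coincides with $(f^{n+1})^*\omega$ as a positive closed current; this rests on the uniqueness of trivial positive closed $(1,1)$ extensions through codimension $\geq 2$ subsets, applied after subtracting the Siu divisorial components from the iterated pullback. A secondary subtlety in (iv) $\Rightarrow$ (i) is verifying that every hypersurface that can appear in the difference is indeed excluded by (iv), which is done by tracing an exceptional hypersurface of an iterate of $f$ back to the first step at which it becomes exceptional.
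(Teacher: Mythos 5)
Your proposal follows essentially the same route as the paper: (i)$\Rightarrow$(ii)$\Rightarrow$(iii) are formal, (iii)$\Rightarrow$(iv) argues by contraposition using Proposition \ref{p} and Theorem \ref{F} to give $(f^*)^{n+1}\omega$ a positive generic Lelong number along $H$ while $(f^{n+1})^*\omega$ is an $L^1_{loc}$ form charging no hypersurface, and (iv)$\Rightarrow$(i) observes that $(f^n)^*T$ and $f^*\big((f^{n-1})^*T\big)$ agree off $\bigcup_{0\leq j\leq n-1}f^{-j}(I_f)$, which has codimension $\geq 2$ under (iv). The differences are cosmetic: the paper shortens (iii)$\Rightarrow$(iv) by replacing $f$ with $f^n$ to reduce to a single step, whereas you iterate Theorem \ref{F} along the orbit; and the ``Siu-decomposition comparison'' you flag as the technical obstacle is precisely the step the paper leaves terse as well --- it is most cleanly justified not by an extension-uniqueness argument (the two currents only agree off a set with codimension-one components) but by the support theorem for flat closed $(1,1)$-currents supported on an analytic set, which gives $(f^*)^{n+1}\omega-(f^{n+1})^*\omega=\sum c_i[H_i]$ directly, after which the Lelong-number bookkeeping yields $c_i\geq 0$ with the coefficient on $H$ strictly positive.
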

\begin{proof}
$(i) \implies (ii) \implies (iii)$ is clear.\\
$(iii) \implies (iv)$ Let be $\omega$ be a K\"ahler form. Assume that there exists an exceptional hypersurface $H$ such that $f^n(H-I_f)\subset I_f$ for some $n$. By replacing $f$ with $f^n$ we may assume that $f(H-I_f)\subset I_f.$ Then by Proposition \ref{p} for each $p\in f(V-I_f)\subset I_f$ we have $\nu(f^*\omega ,p)>0$ but this implies that $\nu ((f^*)^2\omega, q)>0$ for every $q\in H-I_f$. However, $(f^2)^*\omega$ is an $L^1_{loc}$ coefficient form and  does not charge $H$ and hence the cohomology classes of $(f^*)^2\omega$ and $(f^2)^*\omega$ are different.\\
$(iv)\implies (i)$ Let $T\in \mathcal{T}(X).$ Notice that $f^{n-1}$ and $f^n$ are both holomorphic on $X-(I_f \cup f^{-1}(I_f))\dots \cup f^{n-1}(I_f))$ Thus, $(f^n)^*T=(f^*)(f^{n-1})^*T$ on this set. Since there is no hypersurface contained in $X-(I_f \cup f^{-1}(I_f))\dots \cup f^{n-1}(I_f))$ we get the equality on $X.$
\end{proof}

\begin{lem} \label{lem}
Let $\pi:Z\rightarrow Y$ be a proper modification between smooth projective varieties. Let $\eta$ be a smooth closed real $(1,1)$ form on $Z$ such that $\langle \eta,C \rangle \geq 0$ for every $\pi$-exceptional curve $C$. Then $\pi_*\eta$ has potentials bounded from above.
\end{lem}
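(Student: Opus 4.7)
The plan is to produce a smooth closed $(1,1)$-form $\beta$ on $Y$ representing $\pi_*[\eta]$ and a function $\varphi$ bounded below on $Z$, so that after pushforward one obtains $\pi_*\eta = \beta + dd^c(-\pi_*\varphi)$, exhibiting $-\pi_*\varphi$ as a bounded-above potential for $\pi_*\eta$. The key tool linking the positivity hypothesis on $\eta$ to the required bound is the Negativity Lemma.

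First I would choose a smooth closed representative $\beta$ of the class $\pi_*[\eta]\in H^{1,1}(Y,\mathbb{R})$. On $Z$, the smooth form $\pi^*\beta-\eta$ represents $\pi^*\pi_*[\eta]-[\eta]$, which lies in $\ker(\pi_*)\subset H^{1,1}(Z,\mathbb{R})$ since $\pi_*\pi^*=\mathrm{id}$. Because $\pi:Z\to Y$ is a proper birational morphism between smooth projective varieties, the subspace $\ker(\pi_*)$ is spanned by classes of $\pi$-exceptional prime divisors (this reduces via Hironaka/weak factorization to the elementary case of a blow-up with smooth center, where a single exceptional divisor generates the new cohomology). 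Hence I may write $\pi^*\beta-\eta\sim [E]$ for some $\pi$-exceptional $\mathbb{R}$-divisor $E$ on $Z$.

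Next I would verify the Negativity Lemma's hypotheses for $E$. For every $\pi$-exceptional irreducible curve $C$, the projection formula gives $\langle \pi^*\pi_*[\eta],C\rangle=\langle \pi_*[\eta],\pi_*C\rangle=0$, whence
$$\langle -E,C\rangle=\langle \eta,C\rangle-\langle \pi^*\pi_*[\eta],C\rangle=\langle \eta,C\rangle\geq 0$$
by hypothesis, so $-E$ is $\pi$-nef. Since $\pi_*E=0$ is trivially effective, the Negativity Lemma (part (1)) forces $E$ itself to be effective.

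With $E\geq 0$ in hand, the global $dd^c$-lemma on the compact K\"ahler manifold $Z$ produces a function $\varphi\in L^1(Z)$ with
$$\pi^*\beta-\eta-[E]=dd^c\varphi.$$
Because $[E]\geq 0$ and $\pi^*\beta-\eta$ is smooth, rearranging yields $dd^c(-\varphi)+(\pi^*\beta-\eta)\geq 0$, so $-\varphi$ is quasi-plurisubharmonic on $Z$, hence upper semi-continuous and globally bounded above; equivalently $\varphi$ is bounded below. Finally I push forward by $\pi$. Using $\pi_*\pi^*\beta=\beta$, $\pi_*[E]=0$, and the commutation $\pi_*\circ dd^c = dd^c\circ\pi_*$ on currents, I obtain $\beta-\pi_*\eta = dd^c(\pi_*\varphi)$, i.e.\
$$\pi_*\eta=\beta+dd^c(-\pi_*\varphi).$$
Since $\pi$ is birational, $\pi_*\varphi=\varphi\circ\pi^{-1}$ almost everywhere on $Y$, so $\pi_*\varphi\geq \inf_Z\varphi>-\infty$, which makes $u:=-\pi_*\varphi$ bounded above. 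Every local potential for $\pi_*\eta$ differs from $u$ by a locally smooth function, so all local potentials are bounded above, as required. The main obstacle is the first step---identifying $\pi^*\pi_*[\eta]-[\eta]$ with the class of an $\mathbb{R}$-divisor on $Z$---after which the argument reduces to standard invocations of the Negativity Lemma, the $dd^c$-lemma on compact K\"ahler manifolds, and elementary birational pushforward identities.
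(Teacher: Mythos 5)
Your proof is correct and follows essentially the same route as the paper: identify the class $\pi^*\pi_*[\eta]-[\eta]$ with a $\pi$-exceptional $\mathbb{R}$-divisor $E$, use the hypothesis together with $\pi_*C=0$ to see that $-E$ is $\pi$-nef so the Negativity Lemma gives $E\geq 0$, and then read off a quasi-psh (hence bounded above) potential. The only organizational difference is that you invoke the $dd^c$-lemma on $Z$ and push the resulting potential forward, whereas the paper applies it on $Y$ and pulls the potential $u$ back to $Z$; both hinge on the same inequality, that $dd^c$ of the potential plus a smooth form equals $[E]\geq 0$.
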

\begin{proof}
Let $\pi_*\eta=\eta'+dd^cu$ for some smooth form $\eta'$ and  $u\in L^1(X)$. We claim that $u$ is bounded from above. Indeed,
$$\pi^*\pi_*\eta=\pi^*\eta'+dd^c(u\circ \pi)=\eta +E$$
where $E$ is an $\mathbb{R}$ divisor supported in $\mathcal{E}(\pi).$ Since
$$0\leq \langle \eta,C\rangle=\langle-E,C\rangle$$
for every $\pi$-exceptional curve $C$ by negativity lemma $E$ is an effective divisor. Hence, $u\circ \pi$ is qpsh on $Z$ and bounded from above. Thus, so is $u$.
\end{proof}
\begin{prop} \label{bdd}
Let $X$ be a projective manifold and $f:X\dashrightarrow X$ be a dominant rational map. Let $\theta$ be a smooth closed real $(1,1)$ form on $X$ such that $\langle\theta,C\rangle\geq 0$ for every curve $C\subset E^-_f:=f(I_f)$ then the potentials of $f^*\theta$ are bounded from above.
\end{prop}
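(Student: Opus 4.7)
The plan is to reduce this statement directly to Lemma \ref{lem} applied to the first projection $\pi_1:\widetilde{\Gamma_f}\to X$ of a desingularization of the graph of $f$, with $\eta:=\pi_2^*\theta$. Because $\theta$ is smooth and $\pi_2$ is holomorphic, $\eta$ is a smooth closed real $(1,1)$ form on $\widetilde{\Gamma_f}$, and by the very definition of pullback of forms by a meromorphic map we have $(\pi_1)_*\eta = f^*\theta$. It therefore suffices to verify the numerical positivity hypothesis $\langle\eta,C\rangle\geq 0$ for every $\pi_1$-exceptional irreducible curve $C\subset \widetilde{\Gamma_f}$.

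The key step is the identification of such curves. Since $f$ is holomorphic on $X\setminus I_f$ and $I_f$ has codimension at least two, $\pi_1$ restricts to a biholomorphism above $X\setminus I_f$. Hence any $\pi_1$-exceptional curve $C$ must satisfy $\pi_1(C)\subset I_f$, and consequently
\begin{equation*}
\pi_2(C) \subset \pi_2(\pi_1^{-1}(I_f)) = f(I_f) = E_f^-.
\end{equation*}
Two cases now arise. If $\pi_2$ contracts $C$ to a point, then $\eta|_C\equiv 0$, so $\langle\eta,C\rangle=0$. Otherwise $\pi_2(C)$ is an irreducible curve contained in $E_f^-$, and by the projection formula
\begin{equation*}
\langle\eta,C\rangle = (\deg \pi_2|_C)\cdot \langle \theta,\pi_2(C)\rangle \geq 0,
\end{equation*}
where the final inequality is exactly the hypothesis on $\theta$. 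Thus the assumption of Lemma \ref{lem} is verified, and its conclusion yields that $f^*\theta = (\pi_1)_*\eta$ has potentials bounded from above, as desired.

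The only real content in this plan lies in the identification of $\pi_1$-exceptional curves and the projection formula computation; the actual boundedness statement is then immediate from Lemma \ref{lem} (which in turn rests on the Negativity Lemma). I do not foresee any substantial obstacle beyond these standard verifications, since the hypothesis on $\theta$ is tailored precisely to make the numerical positivity of $\pi_2^*\theta$ on $\pi_1$-exceptional curves automatic.
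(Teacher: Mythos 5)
Your proof is correct and follows essentially the same route as the paper: apply Lemma~\ref{lem} to $\pi_1:\widetilde{\Gamma_f}\to X$ with $\eta=\pi_2^*\theta$, observe that a $\pi_1$-exceptional curve is either contracted by $\pi_2$ or mapped to a curve in $E_f^-$, and conclude via the projection formula. You have merely supplied slightly more detail (the explicit degree factor, the reason $\pi_2(C)\subset E_f^-$) than the paper's terser version, but the argument is the same.
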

\begin{proof}
We write $f^*\theta=(\pi_1)_*(\pi_2)^*\theta$ where $(\pi_2)^*\theta$ is a smooth form on the desingularization of the graph of $f$,  $\tilde{\Gamma} \subset X\times X.$ Notice that for any $\pi_1$-exceptional curve $C\subset \tilde{\Gamma}$, $\pi_2(C)$ is either a point in $X$ or a curve in $E^-_f.$ Thus, we have 
$$\langle \pi_2^*\theta,C\rangle = \langle \theta,\pi_2(C)\rangle \geq 0$$
Then, applying Lemma \ref{lem} with $\eta=(\pi_2)^*\theta$ the assertion follows. 
\end{proof}
 For a convex cone $\mathcal{C}$ in a finite dimensional vector space $V$ we define the dual cone $\mathcal{C}^{\vee}$ to be the set of linear forms in $V^*$ which have non-negative values on $\mathcal{C}$. Moreover, by Hahn-Banach thoerem we have $\mathcal{C}^{\vee\vee}=\overline{\mathcal{C}}.$
\begin{thm} \label{green}
Let $X$ be a projective manifold and $f:X\dashrightarrow X$ be a 1-regular dominant rational map. We assume that $\lambda_1(f)>1$ is a simple eigenvalue of $f^*$ and $\alpha$ denote the corresponding eigenvector. If $\alpha\cdot C\geq0$ for every curve $C\subset E^-_f$ then for every smooth representative $\theta \in \alpha$ we have   
$$T_{\alpha}= \lim_{n\rightarrow \infty} \frac{1}{\lambda^n}(f^n)^*\theta$$
exists. Moreover, $T_{\alpha}$ is a positive closed $(1,1)$ current such that $f^*T_{\alpha}=\lambda T_{\alpha}$.
\end{thm}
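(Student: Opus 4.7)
The plan is to verify condition $(\star)$ of Theorem~\ref{B} under our hypotheses and then invoke Theorem~\ref{B} to obtain the existence of $T_\alpha$ with all desired properties.

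The first step is to apply Proposition~\ref{bdd} together with the $1$-regularity of $f$ to obtain a uniform upper bound on the potentials of $\lambda^{-n}(f^n)^*\theta$. By Proposition~\ref{bdd}, we may write $\frac{1}{\lambda}f^*\theta = \theta + dd^c\gamma_1$ with $\gamma_1$ qpsh and, after subtracting a constant, $\gamma_1 \leq 0$. Iteration using $(f^n)^* = (f^*)^n$ then gives $\frac{1}{\lambda^n}(f^n)^*\theta = \theta + dd^c\gamma_n$ with $\gamma_n = \sum_{i=0}^{n-1}\lambda^{-i}\gamma_1\circ f^i$, a decreasing sequence of non-positive qpsh functions.

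The second step adapts the Sibony Cesaro-mean construction from the proof of Theorem~\ref{B}: the averaged currents $R_N := \frac{1}{N}\sum_{i=0}^{N-1}\lambda^{-i}(f^i)^*\Tmin$ are positive closed currents in $\alpha$ with uniformly bounded mass, so a weak subsequential limit $S = \theta + dd^c u$ is a positive invariant current in $\alpha$ with $u$ qpsh and (after normalization) $u \leq 0$. Its invariance together with the first step gives, after absorbing a constant, $\gamma_n = u - \lambda^{-n} u\circ f^n$. Since $u \leq 0$, we have $\lambda^{-n} u\circ f^n \leq 0$ and hence $\gamma_n \geq u$ pointwise; the decreasing sequence $\{\gamma_n\}$ is therefore bounded below in $L^1$ by the qpsh function $u$, and by Hartogs' lemma it converges in $L^1(X)$ to a qpsh limit $\gamma_\infty \geq u$.

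The main step is to show $\gamma_\infty = u$, equivalently $\lambda^{-n}u\circ f^n \to 0$ in $L^1(X)$. For this I would use that $(f^n)_*dV$ is a probability measure absolutely continuous with respect to $dV$, combined with Skoda-type integrability ($u \in L^p$ for every $p<\infty$, since $u$ is qpsh on a compact K\"ahler manifold), to control $\int u \, d(f^n)_*dV$ uniformly in $n$; the simplicity of $\lambda$ enters precisely here, ensuring that the invariant current is essentially unique and preventing pathological concentration of $(f^n)_*dV$ on the polar set of $u$. Granted $\gamma_n \to u$ in $L^1$, the same analysis applied to $\Tmin$ shows that the $\theta$-potential $\phi_n = \gamma_n + \lambda^{-n}\vmin\circ f^n$ of $\lambda^{-n}(f^n)^*\Tmin$---which is bounded above by $\vmin + O(1)$ by the maximality of $\vmin$ and bounded below by $u + \lambda^{-n}\vmin\circ f^n - O(\lambda^{-n})$ via Sibony's trick---converges in $L^1$ to $u$, whence $\lambda^{-n}\vmin\circ f^n = \phi_n - \gamma_n \to 0$ in $L^1$, i.e.\ condition $(\star)$. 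With $(\star)$ established, Theorem~\ref{B} immediately delivers $T_\alpha = \lim\lambda^{-n}(f^n)^*\theta$ as a positive closed $(1,1)$ current satisfying $f^*T_\alpha = \lambda T_\alpha$. The main obstacle is the $L^1$-vanishing of $\lambda^{-n}u\circ f^n$ claimed above: controlling the mass of iterated pushforwards $(f^n)_*dV$ near the polar set of the Sibony potential $u$ is the technical heart of the argument, and is where the $1$-regularity of $f$ and the simplicity of $\lambda$ are used essentially.
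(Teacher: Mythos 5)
Your plan reverses the logical flow of the paper, and the reversal introduces a gap that you do not close. In the paper, condition $(\star)$ is not an intermediate step toward Theorem \ref{green}; it is Corollary \ref{greens}, derived \emph{a posteriori} from Theorem \ref{green} together with Proposition \ref{A}. The paper's proof of Theorem \ref{green} never touches $\vmin$. It sets up the decreasing sequence $\gamma_n$ exactly as you do, uses Sibony's argument to get the lower bound and hence $L^1$ convergence $\gamma_n \to \gamma_\infty$, and then proves positivity of $T_\alpha = \theta + dd^c\gamma_\infty$ directly by a duality argument: for a cutoff $\chi$ and a constant-coefficient positive $(k-1,k-1)$ form $\sigma$, one tests $\langle \lambda^{-n}(f^n)^*\theta,\chi\sigma\rangle = \langle\theta,\lambda^{-n}(f^n)_*(\chi\sigma)\rangle$ and uses the fact (from \cite{BS3}, Lemma 1.3, where simplicity of $\lambda$ is crucial) that the push-forwards $\lambda^{-n}(f^n)_*(\chi\sigma)$ have weak limit points which are positive closed and lie in the dual of the psef cone, giving $\langle\theta,S\rangle\geq 0$.

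The gap in your version is the claim that $\gamma_\infty = u$, equivalently $\lambda^{-n}u\circ f^n \to 0$ in $L^1$. What you actually have from the Sibony trick is only the one-sided bound $\gamma_n \geq u$; there is no reason the decreasing limit $\gamma_\infty$ equals $u$ rather than lying strictly above it. Your proposed route — appealing to absolute continuity of $(f^n)_*dV$ plus Skoda integrability of $u$ — does not deliver this. Skoda gives $u\in L^p(dV)$, but that says nothing about $\int |u|\,d(f^n)_*dV$, which can grow without control in $n$ unless the densities of $(f^n)_*dV$ are uniformly controlled; the hypothesis $\alpha\cdot C\geq 0$ on curves $C\subset E_f^-$ is never used in your sketch and there is no mechanism in your argument to exclude concentration of $(f^n)_*dV$ on the polar set of $u$. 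Worse, your interpretation of how simplicity of $\lambda$ enters (''ensuring that the invariant current is essentially unique and preventing pathological concentration'') is not the role it plays in the paper at all — there it governs the normalized dual action $\lambda^{-n}(f^n)_*$ on $(k-1,k-1)$ forms, not measure-theoretic behavior of pushforwards of $dV$. Until the step $\gamma_\infty = u$ is proved by some genuine argument, the proposal does not establish positivity of $T_\alpha$, and the cleanest way through is the duality argument the paper uses, which proves positivity without ever verifying $(\star)$.
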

\begin{proof}
We will sketch the proof:
Let $\theta\in \alpha$ be a smooth representative then by $dd^c$-lemma we have 
$$\frac{1}{\lambda}f^*\theta=\theta+dd^c\gamma$$
and by  Proposition \ref{bdd}, $\gamma \in L^1(X)$ is bounded from above. Thus, we may assume that $\gamma\leq 0.$ Iterating this equation we get 
$$\frac{1}{\lambda^n}(f^n)^*\theta=\theta+dd^c\gamma_n$$
where
$$\gamma_n=\sum_{j=0}^{n-1} \frac{1}{\lambda^j}\gamma \circ f^j.$$
Then $\{\gamma_n\}$ is a decreasing sequence in $L^1(X).$ It follows from Sibony's argument \cite{Si} that  $\{\gamma_n\}\geq \phi$ for some qpsh function $\phi.$ Thus, $\gamma_n\rightarrow \gamma_{\infty}$ for some $ \gamma_{\infty}\in L^1(X).$ Therefore,
$$T_{\alpha}:=\theta+dd^c \gamma_{\infty}$$
defines a closed $(1,1)$ current. It follows from continuity of $f^*$ that $f^*T_{\alpha}=\lambda T_{\alpha}.$\\ \indent
 It remains to show that $T_{\alpha}$ is positive. We will follow the arguments from \cite{BS3} and \cite{DG}. It is enough to show that for every smooth cutoff function $\chi$ supported in a coordinate chart $U \subset X$ and positive $(k-1,k-1)$ form $\sigma$ which is constant relative the coordinates on $U$
 $$\langle T, \chi \sigma \rangle \geq 0.$$ 
 Since $\lambda$ is simple it follows form [\cite{BS3}, Lemma 1.3] that the sequence $\{\frac{1}{\lambda^n}(f^n)_*(\chi \sigma)\}$ has weak limit points which are positive and closed. Moreover, since $(f_*)|_{H^{k-1,k-1}(X,\mathbb{R})}$ preserves classes; these limit points belongs to the dual of the psef cone. Thus,
 $$\langle T_{\alpha},\chi \sigma\rangle=\lim_{n_k \rightarrow \infty} \langle\frac{1}{\lambda^{n_k}}(f^{n_k})^*\theta,\chi \sigma \rangle=\langle \theta ,S\rangle \geq 0$$ 
 where $S=\lim_{n_k\rightarrow \infty}\ \frac{1}{\lambda^{n_k}}(f^{n_k})_*(\chi \sigma).$
\end{proof}

Note that if $\alpha$ is nef then $ \alpha \cdot C \geq 0$ for every curve $C.$ This is the case when $X$ is a compact K\"ahler surface and $\lambda_1(f)> \lambda_2(f)$ and the corresponding results were obtained in \cite{DDG} as a consequence of so called "push-pull formula" (\cite{DF}).\\ \indent
  If there exists an irreducible curve $C\subset E^-_f$ such that $\alpha \cdot C<0$ then $C\subset E_{nn}(\alpha).$ Thus, if $\dim(E_{nn}(\alpha)\cap E^-_f)=0$ then $\alpha \cdot C \geq 0$ for every curve $C \subset E^-_f$ .

The following result follows from Proposition \ref{A}
\begin{cor} \label{greens}
Let $f:X\dashrightarrow X$ and $\alpha$ be as in Theorem \ref{green} then 
$$\frac{1}{\lambda^n}\vmin \circ f^n \rightarrow 0 \ in\ L^1(X).$$
\end{cor}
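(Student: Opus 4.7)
The plan is a direct appeal to Proposition \ref{A}, with Theorem \ref{green} supplying the hypothesis that it requires. Under the assumptions of Corollary \ref{greens}, namely $f$ is 1-regular on a projective manifold, $\lambda := \lambda_1(f) > 1$ is a simple eigenvalue of $f^*$ with eigenvector $\alpha$, and $\alpha \cdot C \geq 0$ for every algebraic irreducible curve $C \subset E^-_f$, Theorem \ref{green} asserts that for every smooth representative $\theta \in \alpha$ the limit
\begin{equation*}
T_\alpha \;=\; \lim_{n \to \infty} \frac{1}{\lambda^n}(f^n)^*\theta
\end{equation*}
exists, is a positive closed $(1,1)$ current in the class $\alpha$, and satisfies $f^*T_\alpha = \lambda T_\alpha$.

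Now I would simply feed this into Proposition \ref{A}. Writing $T_\alpha = \theta + dd^c\phi$ for a qpsh function $\phi$, we are in exactly the situation that Proposition \ref{A} takes as its hypothesis: a dominant 1-regular meromorphic $f$, an invariant class $\alpha \in \psef$ with eigenvalue $\lambda > 1$, and a positive closed $(1,1)$ current of the form $\theta + dd^c\phi$ realized as the limit of $\lambda^{-n}(f^n)^*\theta$ from some (equivalently every) smooth representative. The conclusion of Proposition \ref{A} is precisely $(\star)$, that is $\frac{1}{\lambda^n} v_\alpha^{\min} \circ f^n \to 0$ in $L^1(X)$, which is the statement to be proved.

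There is essentially no obstacle at this stage: the corollary is a formal consequence of combining the two results. The substantive content is hidden upstream. Theorem \ref{green} required the Negativity Lemma together with Proposition \ref{bdd} to produce the bounded-from-above potentials $\gamma$, from which the decreasing sequence $\gamma_n = \sum_{j=0}^{n-1} \lambda^{-j} \gamma \circ f^j$ could be shown to stay above a qpsh function via Sibony's trick, thereby yielding the invariant current $T_\alpha$. Proposition \ref{A}, in turn, was a short $L^1$ argument iterating the cohomological identity $\gamma + \frac{1}{\lambda}\phi \circ f = \phi$. The only cosmetic point to verify is that the $L^1$-convergence conclusion is independent of the smooth representative $\theta$ used to define $v_\alpha^{\min}$, which holds because any two choices differ by $dd^c$ of a smooth (hence bounded) function on the compact manifold $X$.
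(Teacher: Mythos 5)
Your proof is correct and follows exactly the route the paper intends: Theorem \ref{green} produces the invariant limit current $T_\alpha=\lim_n\lambda^{-n}(f^n)^*\theta\in\alpha$, which is precisely the hypothesis of Proposition \ref{A}, and Proposition \ref{A} then yields $(\star)$. The paper itself states the corollary as an immediate consequence of Proposition \ref{A} without further argument, so your reasoning (including the observation that $\alpha\in\psef$ and that the $L^1$-conclusion is independent of the chosen smooth representative) is the same approach, just spelled out.
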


 \section{Noetherian Mappings}
 Let $\mathbb{P}^d$ denote the complex projective space of dimension $d$ and for a point $x \in \mathbb{P}^d$
 $$x=[x_0:x_1:\dots:~x_d]$$
 denotes the homogenous coordinates on $\mathbb{P}^d$. For a subset $I \subset \{0,1,..,d\}$ we denote its complement by $\hat{I}:=\{0,1,..,d\}-I$ and its cardinality by $|I|$. We also define the sets
 $$\mathcal{D}_I:=\{[x_0:\dots:x_d]\in \mathbb{P}^d: x_{i_1}=x_{i_2}\ \text{for every}\ i_1, i_2 \in \hat{I}\}.$$
 In particular, if $I=\{i\}$ then we set  $\mathcal{D}_i:=\mathcal{D}_{\{i\}}$ which is a complex line. We also denote
 $$\Sigma_I:=\{[x_0:\dots:x_d]\in \mathbb{P}^d: x_i=0\ \text{for}\ i \in I\}.$$\\ 
 In this section, we consider the maps of the form  $f=L \circ J:\mathbb{P}^d \dashrightarrow \mathbb{P}^d$ where $J:\mathbb{P}^d \dashrightarrow \mathbb{P}^d$ is the involution defined by   
 $$J[x_0:x_1:\dots:x_d]=[x_0^{-1}:x_1^{-1}:\dots:x_d^{-1}]=[x_{\hat{0}}:x_{\hat{1}}:\dots:~x_{\hat{d}}] $$
 with $ x_{\hat{\jmath}}:=\displaystyle\prod^d_{\substack{i=1 \\
i\not=j}}x_i$ and $L$ is a linear map given by $(d+1)\times (d+1)$ matrix of the form
 \begin{equation}
 L=
 \begin{bmatrix}
a_0-1 & a_1 & a_2 & \ldots & a_d
 \\
a_0 & a_1-1 & a_2 & \ldots & a_d
 \\
a_0 & a_1 & a_2-1 & \ldots & a_d
\\
\vdots & \vdots & \vdots & \ddots & \vdots \\

a_0 & a_1 & a_2 & \ldots & a_d-1
\end{bmatrix}
\end{equation}
 with $a_j \in \mathbb{C}$ and $\sum_{j=0}^d a_j=2.$ It follows that $\det (L)=(-1)^{d}$ and $L$ is involutive that is $L = L^{-1}$ in $PGL(d+1,\mathbb{C})$. A map of this form is called Noetherian mapping in \cite{BHM}.
 Notice that $f$ is a birational mapping with $f^{-1}=J \circ L.$ Moreover, the indeterminacy locus is given by
 $$\displaystyle I_f=\bigcup_{|I|\geq 2} \Sigma_I.$$ \indent
 For a point $p \in \mathbb{P}^d,$ we define it's orbit $\mathcal{O}(p)$ as follows: $\mathcal{O}(p)=\{p\}$ if $p \in I_f$ and $\mathcal{O}(p) =\{p,f(p),f^2(p),..,f^{N-1}(p)\}$ if $f^j(p)\not\in I_f$ for $0\leq j\leq N-2$ and $f^{N-1}(p) \in I_f$ for some $N \in \mathbb{N}$, otherwise $\mathcal{O}(p)=\{p,f(p),f^2(p),...\}.$ If $\mathcal{O}(p)$ is finite with $f^{N-1}(p) \in I_f$ then we say that $p$ has a singular orbit of length $N$; otherwise we say that it has a non-singular orbit.\newline \indent
 A hypersurface $H$ is called exceptional if  $\dim f(H-I_f)<d-1.$ The only exceptional hypersurfaces of $f$ are of the form $$\Sigma_i:=\{[x_0:\dots:x_d]\in \mathbb{P}^d: x_i=0\}.$$ 
 In fact $p_i:=f(\Sigma_i-I_f)$ is the $i^{th}$ column of the matrix $L$. It follows from Theorem \ref{t} that $f$ is 1-regular if and only if $f^n(\Sigma_i-I_{f^n})\not\subset I_f$ for every $n\geq1$ and $i \in \{0,1,\dots,d\}.$ We denote the orbit of $\Sigma_i$ by $\mathcal{O}_i:=\mathcal{O}(p_i).$ Then it is easy to see that the orbit $\mathcal{O}_i$ is given by $p_{i,j}=[1:\dots:1:\frac{j(a_i-1)}{ja_i-(j-1)}:1:\dots :1]$  for $j=1,2\dots$ and $\mathcal{O}_i$ is contained in the complex line $\mathcal{D}_i.$ Thus, $\mathcal{O}_i \cap \mathcal{O}_j =\emptyset$ for $i\not=j.$ In particular, $\mathcal{O}_i$ is singular if and only if $a_i=\frac{N-1}{N}$ for some $N \in \mathbb{N}_+$ and in this case $f^{N-1}(p_i)=e_i:=[0:\dots:0:1:0\dots:0]$ where 1 is in the $i^{th}$ component. Let $\mathcal{O}_i=\{p_{i,j}\}_{j=1}^{N_i}$ be a singular orbit then we denote its length by $|\mathcal{O}_i|:=N_i$.  
 We define the set $S:=\{i : \mathcal{O}_{i}\ \text{is singular}\}$ and we also set $\displaystyle \mathcal{O}_S:=~\bigcup_{i\in S} \mathcal{O}_i$. By conjugating $f$ with an involution, without lost of generality we may assume that $S=\{0,1,\dots,k\}$ with $N_0\leq N_1\leq \dots \leq N_k$ where $0\leq k\leq d+1$  and we define $l$ by $l:= \#\{i\in S : a_i= 0\}$ if the later set is non-empty otherwise we set $l=0$.\\ \indent
 Let $\pi:X\rightarrow \mathbb{P}^d$ be the complex manifold obtained by blowing up the points in the set $ \mathcal{O}_S$ successively. Then $f$ induces a birational map $f_X:X \dashrightarrow X.$\\ \indent
  We denote the exceptional fiber over the point $p_{i,j} \in \mathcal{O}_S$ by $P_{i,j}:=\pi^{-1}(p_{i,j}).$ We also define the class  $H_X:=\pi^*H$ where $H\subset \mathbb{P}^d$ is class of a generic hyperplane and let $P_{i,j}$ denote the class of exceptional divisor over $p_{i,j}.$ Then $\{H_X,P_{0,1},P_{0,2},..,P_{k,N_{k}}\}$ forms a basis for $H^{1,1}(X,\mathbb{R})$ and the action of $f_X^*$ on $H^{1,1}(X,\mathbb{R})$ is given by
 \begin{eqnarray} 
 f_X^*(H_X)&=&dH_X-(d-1)\sum_{i \in S}P_{i, N_{i}} 
 \\
  f_X^*(P_{i,j+1})&=&P_{i,j}\ \text{for}  \ 1\leq j \leq  N_{i}-1
  \\
   f_X^*(P_{i, 1})&=&\{\widetilde{\Sigma_{i}}\}=H_X-\sum_{\substack{j\in S \\
j\not=i}}P_{j,N_{j}}
 \end{eqnarray}
 where $\widetilde{\Sigma_{j}} \subset X$ denotes the strict transform of $\Sigma_{j}\subset \mathbb{P}^d$ (see [\cite{BK},\S3] for details). \\ \indent
 
 \begin{thm}\cite{BK}\label{BK} Let $f_X:X\dashrightarrow X$ be as above then $f_X$ is 1-regular and characteristic polynomial of $f_X^*$ is given by
 $$\chi(x)= (x-1)^l\big[(x-(d-l))\prod_{j=l}^{k}(x^{N_j}-1)+(x-1)\sum_{j=l}^{k}\prod^{k}_{\substack{i=l  \\
i\not=j}} (x^{N_i}-1).$$
 Moreover, if $S\not=\emptyset$ and 
 \begin{equation} \label{*}
d-l\geq 3
 \end{equation}
 then $d-l-1\leq\lambda:=\lambda_1(f)\leq d$ is the unique eigenvalue of $f_X^*$ of modulus greater than one and is a simple root of $\chi(x).$
 \end{thm}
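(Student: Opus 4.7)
The plan is to establish the three assertions in turn---1-regularity of $f_X$, the explicit formula for $\chi(x)$, and the dominant-eigenvalue properties---using the pullback action (5.2)--(5.4) as the computational backbone.

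\textbf{1-regularity.} By criterion (iv) of Theorem \ref{t}, it suffices to check that no exceptional hypersurface of $f_X$ has a forward iterate contained in $I_{f_X}$. The exceptional hypersurfaces of $f$ on $\mathbb{P}^d$ are the coordinate hyperplanes $\Sigma_i$, with $f(\Sigma_i - I_f) = \{p_i\}$. For $i\notin S$ the orbit $\mathcal{O}_i$ avoids $I_f$ by definition of a non-singular orbit, so $\tilde\Sigma_i$ causes no problem. For $i\in S$, successively blowing up $p_{i,1},\dots,p_{i,N_i}$ turns the collapsing chain $\tilde\Sigma_i \to P_{i,1}\to \cdots\to P_{i,N_i}$ into a chain of hypersurfaces, removing $\tilde\Sigma_i$ from the exceptional locus. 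The only remaining point is a local verification that $f_X$ extends holomorphically across $P_{i,N_i}$ over $e_i\in I_f$ and sends it onto a hypersurface of $X$ disjoint from $I_{f_X}$; this is a chart computation using $f = L\circ J$ in affine coordinates around $e_i$, in which the blowup resolves the indeterminacy.

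\textbf{Characteristic polynomial.} Assemble the matrix $M$ of $f_X^*$ in the ordered basis $\{H_X, P_{0,1},\dots,P_{0,N_0},\dots,P_{k,1},\dots,P_{k,N_k}\}$ directly from (5.2)--(5.4). Each singular orbit contributes an $N_i\times N_i$ nilpotent-shift sub-block (from $f_X^*P_{i,j+1}=P_{i,j}$) coupled to the $H_X$-row via the entries $-(d-1)$ at rows $P_{i,N_i}$ of (5.2) and to the $P_{j,N_j}$ rows $(j\ne i)$ via the entries $-1$ of (5.4). Cofactor expansion along the shift-columns $P_{i,2},\dots,P_{i,N_i}$ of each block reduces the $N_i$ shift rows/columns to a single rational combination, producing (after clearing denominators) a factor $x^{N_i}-1$ attached to each long orbit and a factor $x-1$ for each short orbit with $a_i=0$, giving the prefactor $(x-1)^l$. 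The main summand $(x-(d-l))\prod(x^{N_j}-1)$ records the diagonal contribution $d$ from the $H_X$-column minus the $l$ short orbits already peeled off, while the additive term $(x-1)\sum_j\prod_{i\ne j}(x^{N_i}-1)$ encodes the cross-coupling between distinct blocks through the $-P_{j,N_j}$ terms of (5.4).

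\textbf{Dominant eigenvalue.} Assume $S\ne\emptyset$ and $d-l\ge 3$, and write $\chi(x) = (x-1)^l P(x)$. Away from $N_j$-th roots of unity one has $P(x) = Q(x)\prod_{j=l}^{k}(x^{N_j}-1)$ with
\[
Q(x) \;=\; x - (d-l) \;+\; (x-1)\sum_{j=l}^{k}\frac{1}{x^{N_j}-1}.
\]
On the real half-line $x>1$ each summand $(x-1)/(x^{N_j}-1) = 1/(1+x+\cdots+x^{N_j-1})$ is positive and strictly decreasing, so $Q$ is strictly increasing there. Sign analysis at $x=d$ and $x=d-l-1$ (using $d-l\ge 3$ and $N_j\ge 2$ for $j\in\{l,\dots,k\}$) gives $Q(d)>0$ and $Q(d-l-1)\le 0$, locating a unique real root $\lambda\in[d-l-1,d]$; simplicity follows from $Q'(\lambda)>0$. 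Since $\prod(x^{N_j}-1)$ has all zeros on the unit circle, the eigenvalues of $f_X^*$ of modulus $>1$ are precisely the zeros of $Q$ in $\{|x|>1\}$.

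\textbf{Main obstacle.} The hardest step is ruling out complex zeros of $Q$ in the annulus $1<|x|<\lambda$. I would apply a Rouch\'e argument on the circle $|x|=\lambda$: the principal term $x-(d-l)$ has winding number one about $0$, while the perturbation $(x-1)\sum_{j}(x^{N_j}-1)^{-1}$ is controlled in modulus by $\sum_{j}|x|^{1-N_j}(1-|x|^{-N_j})^{-1}$, and the hypothesis $d-l\ge 3$ together with $N_j\ge 2$ is precisely what forces the perturbation strictly below $|x-(d-l)|$ on that circle. Combined with the verification that $Q$ does not vanish at any $N_j$-th root of unity, this pins down $\lambda$ as the unique eigenvalue of $f_X^*$ of modulus strictly greater than one.
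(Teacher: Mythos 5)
The paper does not actually prove this statement---it cites it verbatim from \cite{BK}---so there is no internal proof to compare your argument against; what follows is an assessment of your sketch on its own terms.

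Your high-level plan (1-regularity via Theorem~\ref{t}(iv); computing $\chi$ from the matrix of (6.2)--(6.4); reducing to the rational function $Q(x) = x-(d-l)+(x-1)\sum_{j}(x^{N_j}-1)^{-1}$; a Rouch\'e argument for uniqueness) is the natural approach. But the dominant-eigenvalue analysis as written contains a non sequitur and an unverified bound, and neither is a formality. First, you write that each summand $\frac{x-1}{x^{N_j}-1}=\frac{1}{1+x+\cdots+x^{N_j-1}}$ is ``positive and strictly decreasing, \emph{so} $Q$ is strictly increasing.'' That inference is backwards: if the summands are decreasing they work \emph{against} the monotonicity of $Q$, not for it, and $Q'(x)=1+\sum_j\frac{d}{dx}\left[\frac{1}{1+x+\cdots+x^{N_j-1}}\right]$ is positive only if the total decrease of the $k-l+1$ summands is strictly less than $1$; near $x=1$ each summand's derivative is as negative as about $-\tfrac{1}{2}$, and even at $x=2$ each $N_j=2$ summand contributes $-\tfrac{1}{9}$, so with enough summands $Q'$ need not be positive---you must restrict to the relevant interval $[d-l-1,d]$ and bound the count $k-l+1$. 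Second, the claim $Q(d-l-1)\le 0$ needs $\sum_{j=l}^{k}\frac{1}{1+(d-l-1)+\cdots+(d-l-1)^{N_j-1}}\le 1$; since each term is bounded above only by $\frac{1}{d-l}$ and the number of terms is $k-l+1$, which can be as large as $d-l+1$ when $|S|=d+1$, this sum can exceed $1$. In fact, for $|S|=d+1$, $d-l=3$, and $N_j=2$ for all $j\ge l$ (a configuration compatible with $\sum a_i=2$, e.g.\ $d=3$ and $a_0=a_1=a_2=a_3=\tfrac12$), the bracketed factor in $\chi$ reduces to $(x-1)^{3}(x+1)^{3}$ and the spectral radius is $1$, not in $[d-l-1,d]$. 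So the sign analysis is not just unproven but can genuinely fail under the hypotheses as you have stated them, and closing the gap requires either an argument showing that such configurations are excluded (e.g.\ by a constraint linking $|S|$, $l$, and the $N_j$ through $\sum a_i=2$) or a narrower hypothesis from \cite{BK}. Separately, both the 1-regularity verification near $e_i$ and the cofactor expansion for $\chi$ are announced rather than carried out; these are the bookkeeping heart of \cite{BK} and should not be waved through.
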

    In the sequel, we will assume that $d \geq3$ and $f_X$ is as in Theorem \ref{BK} so that $(\ref{*})$ holds. We denote the corresponding eigenvector by $\alpha_f \in H^{1,1}(X,\mathbb{R})$ with $f^*_X\alpha_f=\lambda \alpha_f$ and we normalize it so that
    $$\alpha_f=H_X-c \cdot E$$
    where $c=(c_{0,1},c_{0,2},\dots,c_{k,N_{k}})$ and $E=(P_{0,1},P_{0,2},\dots,P_{k,N_{k}}).$ 
\begin{lem} \label{lemma} Let $\alpha_f=H_X-c \cdot E$ be as above then for every $0\leq i \leq k$
\begin{enumerate}
\item $c_{i,j+1}=\lambda c_{i,j}$ for $1\leq j \leq N_{i}-1$ 
\item $\sum_{i= 0}^{k}c_{i,1}=d-\lambda$
\item $c_{i,1}=\frac{\lambda-1}{\lambda^{N_{i}}-1}>0$ 
\item $\sum_{j=1}^{N_i}c_{i,j}=1.$
\end{enumerate}
\end{lem}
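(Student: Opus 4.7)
The plan is to expand $f_X^* \alpha_f$ using the three pullback formulas (6.2)--(6.4) for $f_X^*$ on the basis $\{H_X, P_{i,j}\}$, then to compare coefficients with $\lambda \alpha_f$ in that same basis. This reduces everything to simple linear bookkeeping.

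First I would write
\begin{equation*}
f_X^*\alpha_f \;=\; f_X^*(H_X) \;-\; \sum_{i\in S} c_{i,1}\, f_X^*(P_{i,1}) \;-\; \sum_{i\in S}\sum_{j=2}^{N_i} c_{i,j}\, f_X^*(P_{i,j})
\end{equation*}
and substitute the three formulas. The last sum contributes only terms of the form $c_{i,j}P_{i,j-1}$ with $j-1 \le N_i-1$, so it never hits the "top" divisors $P_{i,N_i}$. The middle sum contributes $-c_{i,1}H_X$ terms together with $+c_{i,1}P_{j,N_j}$ terms for $j\in S \setminus \{i\}$. The first term contributes $dH_X$ and $-(d-1)P_{i,N_i}$ terms.

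Now I would read off coefficients of the equation $f_X^*\alpha_f = \lambda\alpha_f$. The $H_X$-coefficient gives $d - \sum_{i\in S} c_{i,1} = \lambda$, which is (2). For $1 \le m \le N_i-1$, the $P_{i,m}$-coefficient gives $-c_{i,m+1} = -\lambda c_{i,m}$, which is (1). The only less routine case is the coefficient of $P_{j,N_j}$, which reads
\begin{equation*}
-(d-1) + \sum_{\substack{i\in S \\ i\ne j}} c_{i,1} \;=\; -\lambda c_{j,N_j}.
\end{equation*}
Iterating (1) gives $c_{j,N_j} = \lambda^{N_j-1}c_{j,1}$, and inserting (2) in the form $\sum_{i\ne j}c_{i,1} = (d-\lambda) - c_{j,1}$ collapses the equation to $(\lambda-1) + c_{j,1} = \lambda^{N_j} c_{j,1}$, which solves to
\begin{equation*}
c_{j,1} \;=\; \frac{\lambda - 1}{\lambda^{N_j} - 1}.
\end{equation*}
The positivity in (3) then follows because Theorem \ref{BK} together with the hypothesis $d-l\ge 3$ gives $\lambda \ge d-l-1 \ge 2 > 1$.

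Finally, (4) is immediate: by (1), $c_{i,j} = \lambda^{j-1} c_{i,1}$, so summing the geometric series and using (3),
\begin{equation*}
\sum_{j=1}^{N_i} c_{i,j} \;=\; c_{i,1}\cdot \frac{\lambda^{N_i}-1}{\lambda-1} \;=\; 1.
\end{equation*}
I expect no real obstacle here; the only point requiring care is keeping track of the indices in the $P_{i,N_i}$ coefficient, since that is the one place where the three pullback formulas interact nontrivially.
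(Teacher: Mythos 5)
Your proof is correct and is essentially the same as the paper's: the paper likewise derives (1) and (2) directly from expanding $f_X^*\alpha_f=\lambda\alpha_f$ in the basis via the pullback formulas, then obtains (3) by comparing the $P_{i,N_i}$-coefficient and plugging in (1) and (2), with (4) then following from the geometric series. You have simply written out the coefficient bookkeeping that the paper leaves implicit.
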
 
\begin{proof}
(1) and (2) follows from the invariance of $\alpha_f$ and (6.2)-(6.4).\\
(3) For fixed $i$ we compare the coefficient of $P_{i, N_{i}}$ on the both sides of $f^*\alpha_f=~\lambda \alpha_f$ and obtain
$$(d-1)-\sum_{\substack{j= 0 \\
j\not=i}}^{k} c_{j, 1}=\lambda c_{i, N_i}$$
then by (1) and (2) we get
$$c_{i,1}=\frac{\lambda-1}{\lambda^{N_i}-1}.$$
(4) follows from (1) and (3).
\end{proof}    
 \begin{prop}\label{1}
 The class $\alpha_f \in H^{1,1}_{nef}(X,\mathbb{R})$ if and only if $|S|\leq1.$
\end{prop}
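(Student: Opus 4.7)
The plan is to prove both directions by explicit intersection theory on $X$.

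For the implication $|S|\leq 1 \Rightarrow \alpha_f$ nef, I will decompose $\alpha_f$ as a non-negative combination of manifestly nef classes. When $|S|=0$ no blow-ups are performed, so $X=\mathbb{P}^d$ and $\alpha_f=H_X$ is ample. When $|S|=1$, say $S=\{0\}$, the identity $\sum_{j=1}^{N_0} c_{0,j}=1$ from Lemma \ref{lemma}(4) lets me rewrite
\[
\alpha_f \;=\; \sum_{j=1}^{N_0} c_{0,j}\bigl(H_X-P_{0,j}\bigr),
\]
so the task reduces to showing each auxiliary class $H_X-P_{0,j}$ is nef. I plan to verify this by testing against an arbitrary irreducible curve $C\subset X$: if $C$ is the strict transform of an irreducible $C_0\subset \mathbb{P}^d$ of degree $d_0$, then $(H_X-P_{0,j})\cdot C= d_0-\mathrm{mult}_{p_{0,j}}(C_0)\geq 0$ by the classical multiplicity bound $\mathrm{mult}_p(C_0)\leq \deg(C_0)$; if $C$ lies in an exceptional $P_{0,j'}$, disjointness of the exceptional fibres yields $0$ when $j'\neq j$, while $\mathcal{O}(P_{0,j})|_{P_{0,j}}\cong \mathcal{O}_{\mathbb{P}^{d-1}}(-1)$ makes the intersection a positive multiple of $\deg_{P_{0,j}}(C)$ when $j'=j$.

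For the converse, assume $|S|\geq 2$; I will produce an $\alpha_f$-negative curve. My candidate is the projective line
\[
\ell=\bigl\{[s:t:0:\cdots:0]:[s:t]\in \mathbb{P}^1\bigr\}\subset \mathbb{P}^d,
\]
which joins the two blown-up end-points $e_0=p_{0,N_0}$ and $e_1=p_{1,N_1}$. A direct coordinate check on the explicit formula $p_{i,j}=[1:\cdots:1:\tfrac{j(a_i-1)}{ja_i-(j-1)}:1:\cdots:1]$ shows that $\ell$ passes through no other point of the blow-up locus, and meets each of $e_0,e_1$ transversally; hence
\[
\alpha_f\cdot \widetilde\ell=1-c_{0,N_0}-c_{1,N_1}.
\]
To finish, I will derive the identity
\[
c_{i,N_i}-\tfrac12 \;=\; \frac{\lambda^{N_i-1}(\lambda-2)+1}{2(\lambda^{N_i}-1)}
\]
from Lemma \ref{lemma}, note that it is strictly positive whenever $\lambda\geq 2$, and invoke Theorem \ref{BK} with condition $(\ref{*})$, which forces $\lambda\geq d-l-1\geq 2$. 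This gives $c_{0,N_0}+c_{1,N_1}>1$, so $\alpha_f\cdot \widetilde\ell<0$ and $\alpha_f$ cannot be nef.

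The chief technical obstacle I expect is the nefness verification for $H_X-P_{0,j}$, which has to cover curves lying in exceptional divisors as well as strict transforms of arbitrary curves in $\mathbb{P}^d$ (including the somewhat delicate case of $\widetilde{\mathcal{D}_0}$, which meets every exceptional divisor $P_{0,j}$ because $\mathcal{D}_0$ contains the whole orbit $\mathcal{O}_0$); all remaining steps reduce to elementary intersection arithmetic on $X$ via the formulas of Lemma \ref{lemma}.
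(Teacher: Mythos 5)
Your proof is correct, and the converse direction ($|S|\geq 2 \Rightarrow$ not nef) is essentially the paper's argument: both produce the strict transform of the line through two of the points $e_{i_1}, e_{i_2}$ and use Lemma \ref{lemma} to show $c_{i_1,N_{i_1}}+c_{i_2,N_{i_2}}>1$. Your algebraic identity $c_{i,N_i}-\tfrac12=\frac{\lambda^{N_i-1}(\lambda-2)+1}{2(\lambda^{N_i}-1)}$ checks out and gives the same bound the paper derives via $c_{i,N_i}>1-\tfrac1\lambda\geq 1-\tfrac1{d-l-1}\geq\tfrac12$.

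The forward direction ($|S|=1 \Rightarrow$ nef) is where you deviate from the paper. Both you and the paper start from the same decomposition (the classes $H_X-P_{0,j}$ are exactly $\{\widetilde{H_j}\}$ for a hyperplane $H_j$ through $p_{0,j}$ only, and $\alpha_f=\sum c_{0,j}\{\widetilde{H_j}\}$), but the justification of nefness differs. The paper takes the analytic route: it notes that $\{\widetilde{H_j}\}$ moves in a $(d-1)$-parameter family, deduces that the minimal-singularity current $T_\alpha^{\min}$ has $\nu(T_\alpha^{\min},x)=0$ for every $x$, and invokes Boucksom's characterization (Proposition \ref{Bou} and the ensuing corollary). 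You instead test $H_X-P_{0,j}$ against every irreducible curve of $X$ (strict transforms via $\mathrm{mult}_p(C_0)\leq\deg C_0$, and curves inside the exceptional fibres), which is Kleiman's criterion and is valid here since $X$ is projective with $h^{2,0}(X)=0$, so the transcendental and algebraic nef cones agree. Your route is more elementary and fully explicit; the paper's route is phrased so as to match the Lelong-number/minimal-singularity framework it uses throughout and would also apply verbatim on a non-projective Kähler $X$. Either is acceptable; the only small caveat for your version is that one should say explicitly that the relevant nef cone on this projective $X$ is detected by curves (Kleiman), which you leave implicit.
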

 \begin{proof}
 If $S=\emptyset$ then $X=\Bbb{P}^d$ and  $\alpha_f=\{\omega_{FS}\}$ which is K\"ahler.\\
 Assume that $|S|=1.$ Then $\mathcal{O}_0$ is singular and the orbit is
 $$\Sigma_0 \rightarrow p_1 \rightarrow \dots \rightarrow p_N=e_0.$$
 Let $H_i\subset \mathbb{P}^d$ denote a hyperplane such that $p_i \in H_i$ and $p_j \notin H_i$ for $j\not=i.$ Notice that $H_i$'s form a $(d-1)$-parameter family of hyperplanes. Since $\{\widetilde{H_i}\}=H_X-P_i$ by Lemma \ref{lemma} we can represent the class $\alpha_f$ as the class of effective divisor $\sum_{i=1}^{N}c_i\widetilde{H_i}$ where $\widetilde{H_i}$ is the the strict transform of $H_i.$ Since $\sum_{i=1}^{N}c_i[\widetilde{H_i}] \in \alpha_f$ defines a positive closed $(1,1)$ current, we infer that $\nu(\Tmin,x)=0$ for every $x \in X.$ Thus, it follows from Proposition \ref{Bou} that $\alpha_f$ is nef. \\
\indent Now, we will prove that if $|S| \geq 2$ then $\alpha_f$ is not nef. Indeed, let $\mathcal{O}_{i_1}$ and $\mathcal{O}_{i_2}$ be two singular orbits then by Lemma \ref{lemma} and Theorem \ref{BK}
 $$c_{i_j,N_{i_j}} > 1-\frac{1}{\lambda}\geq1-\frac{1}{d-l-1}\geq\frac{1}{2}\ \text{for}\ j=1,2. $$
Let $\ell$ denote the complex line passing through the points $e_{i_1}$ and $e_{i_2}$ and $\widetilde{\ell} $ be its strict transform in $X$ then
$$\alpha_f \cdot \widetilde{\ell} =1-c_{i_1,N_{i_1}}-c_{i_2,N_{i_2}} <0$$
hence by Proposition \ref{negative} we get $\widetilde{\ell} \subset E_{nn}(\alpha_f)$.
   \end{proof}
  Let $\Sigma_{I}\subset \mathbb{P}^d$ be as above, we also write $\Sigma_{I}$ for its strict transform inside $X.$
\begin{prop} \label{2} 
If $1 \leq k \leq d-1$ and $2\leq N:=N_{i}$ for every $0\leq i\leq k$ then 
\begin{equation*}
E_{nn}(\alpha_f)=
\begin{cases}
\Sigma_{\{k+1,\dots,d\}} & \text{if } k \leq d-2,\\
\displaystyle \bigcup^k_{i=0}\Sigma_{\{i,d\}} & \text{if } k=d-1
\end{cases}
\end{equation*}
In particular, $1\leq \dim_{\mathbb{C}} E_{nn}(\alpha_f)\leq (d-2)$ and $E_{nn}(\alpha_f) \subset I_{f_X}$ is algebraic.
\end{prop}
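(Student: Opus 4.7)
The plan is to establish both inclusions $E_{nn}(\alpha_f)\supset[\text{claimed set}]$ and $E_{nn}(\alpha_f)\subset[\text{claimed set}]$. The first will rest on exhibiting explicit $\alpha_f$-negative curves via Proposition \ref{negative}, and the second on the analytic-singularities characterization in Proposition \ref{Bou}.

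For the lower bound, I would first establish the arithmetic inequality $c_N>1-\frac{1}{\lambda}\geq\frac{1}{2}$: from Lemma \ref{lemma}(3), $c_N=\frac{\lambda^{N-1}(\lambda-1)}{\lambda^N-1}$, and dividing by $\frac{\lambda-1}{\lambda}$ gives $\frac{c_N}{1-\lambda^{-1}}=\frac{\lambda^N}{\lambda^N-1}>1$, while Theorem \ref{BK} and the hypothesis $d-l\geq 3$ yield $\lambda\geq d-l-1\geq 2$. Then, for distinct $i,j\in S=\{0,\ldots,k\}$, the line $\ell_{ij}=\overline{e_ie_j}$ has strict transform $\tilde{\ell}_{ij}\subset X$ satisfying $\alpha_f\cdot\tilde{\ell}_{ij}=1-c_{i,N}-c_{j,N}=1-2c_N<0$, so $\tilde{\ell}_{ij}\subset E_{nn}(\alpha_f)$ by Proposition \ref{negative}. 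For $k=1$ this line is already all of $\Sigma_{\{2,\ldots,d\}}$. For $2\leq k\leq d-2$, the one-dimensional skeleton formed by these lines is too small; to fill out $\Sigma_{\{k+1,\ldots,d\}}\cong\mathbb{P}^k$, I would sweep by higher-degree $\alpha_f$-negative curves of degree $\delta$ through $\{e_0,\ldots,e_k\}$ (for which $\alpha_f\cdot\tilde{C}=\delta-(k+1)c_N<0$ provided $\delta<(k+1)c_N$; the sharper lower bound on $\lambda$ from Theorem \ref{BK} guarantees the existence of suitable rational curves of low degree through an additional variable point). For $k=d-1$, the analogous lines $\tilde{\ell}_{ij}$ cover the union $\bigcup_i\Sigma_{\{i,d\}}$.

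For the upper bound, I would apply Proposition \ref{Bou}, constructing for each $\epsilon>0$ sufficiently many positive currents $T\in\alpha_f[-\epsilon\omega]$ with analytic singularities to pin down $E_{nn}(\alpha_f)$. The key decomposition is $\alpha_f=\sum_{j=1}^N c_j\beta_j$, where $\beta_j=H_X-\sum_{i=0}^k P_{i,j}$ is represented by the $(d-k-1)$-dimensional family $\mathcal{F}_j$ of hyperplanes through the level-$j$ orbit points $\{p_{0,j},\ldots,p_{k,j}\}$; more generally, product weights $w(J)=\prod_\ell c_{j_\ell}$ over multi-indices $J=(j_0,\ldots,j_k)\in\{1,\ldots,N\}^{k+1}$ give representations $T=\sum_J w(J)[\tilde{H}^{(J)}]$ using families $\mathcal{F}_J$ through $\{p_{0,j_0},\ldots,p_{k,j_k}\}$. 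For each $J$, the base locus of $\mathcal{F}_J$ is the $k$-plane $B_J=\mathrm{span}\{p_{0,j_0},\ldots,p_{k,j_k}\}\subset\mathbb{P}^d$. The unavoidable locus of such a representation is: outside the exceptional divisors, $\bigcup_J B_J$; and on $P_{i,j}$, the $(k-1)$-projective subspaces $\mathbb{P}(T_{p_{i,j}}B_J)$ for those $J$ with $j_i=j$. Intersecting over all representations (and varying the hyperplane choices within each family), the explicit description $p_{i,j}=[1{:}\cdots{:}1{:}\alpha_{i,j}{:}1{:}\cdots{:}1]$ enables a direct calculation showing that the common unavoidable locus reduces to the strict transform of $\Sigma_{\{k+1,\ldots,d\}}$.

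The main obstacle is this last intersection analysis, especially the behavior on the exceptional divisors $P_{i,j}$: one must verify that the tangent-direction conditions imposed by varying $B_J$ in the families with $j_i=j$ cut down precisely to the tangent directions of $\Sigma_{\{k+1,\ldots,d\}}$ at $e_i$, and that for $j<N$ nothing constrains the blow-up direction at $p_{i,j}$ in the final intersection. A secondary difficulty concerns the case $k=d-1$: here $\Sigma_{\{d\}}$ is a hypersurface, and since Theorem \ref{nonnef}(2) gives $H_{\mathcal{N}}\subset\mathcal{E}_1$ so $E_{nn}(\alpha_f)$ cannot contain any divisor, the natural ``span of singular terminal points'' must be replaced by the union $\bigcup_{i=0}^{d-1}\Sigma_{\{i,d\}}$ of codimension-two coordinate subspaces, which arise as the pairwise intersections of the corresponding $\Sigma_j$'s and can be verified via the same negative-curve and current-representation techniques.
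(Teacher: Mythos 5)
Your overall strategy mirrors the paper's: explicit $\alpha_f$-negative curves via Proposition \ref{negative} for the lower bound, and effective-divisor representations of $\alpha_f$ to bound Lelong numbers for the upper bound, and indeed your degree-$\delta$ rational curves through $\{e_0,\dots,e_k\}$ and a variable $p$ are exactly what the paper uses (it takes $\delta=k$, with $k-(k+1)c_N<(k+1)/(d-1)-1\leq 0$). But your lower bound for $k=d-1$ is wrong as stated: for $d>3$ the lines $\tilde{\ell}_{ij}$ are one-dimensional whereas each $\Sigma_{\{i,d\}}\cong\mathbb{P}^{d-2}$ has dimension $d-2>1$, so they do not cover. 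The paper instead re-runs the curve sweep (now in degree $d-2$) inside each $\Sigma_{\{i,d\}}$ using the index set $\{0,\dots,d-1\}\setminus\{i\}$, together with the exact computation $c_N=(d-1)/d$ for this case.

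The more serious gap is the upper bound. Every family $\mathcal{F}_J$ you use is pinned to the base $k$-plane $B_J=\mathrm{span}\{p_{0,j_0},\dots,p_{k,j_k}\}$, and since $p_{i,N}=e_i$, the target $\Sigma_{\{k+1,\dots,d\}}$ equals $B_{(N,\dots,N)}$; but any admissible weight system $w$ on multi-indices (product, diagonal, or otherwise) still leaves a union of several $\tilde{B}_J$'s in the unavoidable locus, and it is not ``direct'' that intersecting over all such couplings prunes this down to the single plane $\tilde{\Sigma}_{\{k+1,\dots,d\}}$: you would have to show, for every $x\notin\tilde{\Sigma}$, that a coupling exists with $w(J)=0$ on all $J$ with $\pi(x)\in B_J$ while satisfying the marginals $\sum_{J:j_i=j}w(J)=c_j$, and then still control the exceptional divisors. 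The paper avoids this entirely by introducing representations of a different geometric shape: hyperplanes $D_i=\{2x_i-x_{d-1}-x_d=0\}$ containing the whole orbit lines $\mathcal{D}_j$, $j\neq i$, and hyperplanes $H_\Sigma$ containing $\Sigma_{\{k+1,\dots,d\}}$ itself, combined in the effective decompositions (\ref{rep2})--(\ref{rep3}) (and, for $k=d-1$, (\ref{rep4})--(\ref{rep5}) built from $L_i=\{x_i-x_d=0\}$ and the one-parameter family $F_j=\{x_j-ax_d=0\}$). These additional families are the concrete inputs your plan is missing; without them the deferred ``direct calculation'' does not close.
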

\begin{proof}
It follows from Lemma \ref{lemma} that $c_{i_1,l}=c_{i_2,l}$ for all $i_1, i_2 \in S=\{0,\dots, k\}$ and $1 \leq l \leq N.$ We denote $c_l:=c_{i,l}$ for $i \in \{0,\dots,k\}.$\\
If $|S|=2$ then $\Sigma_{\{2,\dots,d\}}$ is a complex  line and in the proof of Proposition \ref{1} we have already showed that $\Sigma_{\{2,\dots,d\}} \subset E_{nn}(\alpha_f).$\\
Assume that $3 \leq|S|=k+1\leq (d-1)$ and let $p \in \Sigma_{\{k+1,\dots,d\}}\cong \mathbb{P}^{k}$ be a point. Let $\gamma \subset \Sigma_{\{k+1,\dots,d\}}$ be an algebraic irreducible curve of degree $k$ such that $p,e_i \in \gamma$ for every $0\leq i\leq k.$ Then by Lemma \ref{lemma} and by (\ref{*}) we have $c_N>1-\frac{1}{d-1}$ and 
$$ \alpha_f \cdot \gamma=k-(k+1) c_N<\frac{k+1}{d-1}-1\leq0.$$ 
Thus, by Proposition \ref{negative} we get $\Sigma_{\{k+1,\dots,d\}} \subset E_{nn}(\alpha_f).$ \\ \indent
If $k=d-1$ we can apply the same argument to $\{0,\dots,d-1\}-\{i\}$ for $0 \leq i \leq d-1 $. \\    \indent  
To prove the reverse inclusion we will represent the class $\alpha_f$ by effective divisors: Notice that each $p_{i,l}=[1:\dots:1:\frac{l(a_i-1)}{la_i-(l-1)}:1:\dots :1]\in\mathcal{D}_i $ which is a complex line. Let $H_{l} \subset \mathbb{P}^d$  be a hyperplane such that $p_{i,l} \in H_{l}$ for every $0 \leq i \leq k$ and $p_{i,m} \not\in H_{l}$ for $m \not=l.$ This is a $d-k-1$ parameter family of hyperplanes for each $l.$ Then, the class $\{\widetilde{H_l}\}=H_X-\sum_{i=0}^{k} P_{i,l}$ where $\widetilde{H_l}$ denotes the strict transform of $H_l.$ Hence, by Lemma \ref{lemma} we can represent $\alpha_f$ by 
\begin{equation} \label{rep1}
\alpha_f=\sum_{l=1}^N c_l\{\widetilde{H_l}\}.
\end{equation} 
\indent
Next, we assume that $k\leq d-2.$ We consider the hyperplanes of the form $D_i=\{x\in  \mathbb{P}^d: 2x_i-x_{d-1}-x_{d}=0\}$ where $0\leq i\leq k$ is fixed. Then the complex line $\mathcal{D}_j \subset D_i$ for $0\leq j\not=i\leq k$  and $\mathcal{O}_i \cap D_i=\emptyset.$ Thus, $\{\widetilde{D_i}\}=H_X-\displaystyle \sum_{\substack{l=1 \\0\leq j\not=i\leq k}}^NP_{j,l}.$ We also denote $H_{\Sigma_{\{k+1,\dots,d\}}} \subset \mathbb{P}^d$ be a hyperplane containing $\Sigma_{\{k+1,\dots,d\}}$ such that $\{H_{\Sigma_{\{k+1,\dots,d\}}}\}=H_X-\sum_{i\in S}P_{i,N}$ (Eg. $H_{\Sigma_{\{k+1,\dots,d\}}}=\Sigma_j$ for some $k+1\leq j \leq d$). Then by Lemma \ref{lemma}
\begin{equation}\label{rep2}
\alpha_f=\sigma\sum_{i=0}^k\{\widetilde{D_i}\}+(1-\sigma(k+1))\{\widetilde{H}_{\Sigma_{\{k+1,\dots,d\}}}\}+\mathcal{E}
\end{equation}  
where $\sigma=1-c_N$ and $\mathcal{E}$ is an effective divisor supported on $\displaystyle \bigcup_{\substack{i\in\{0,\dots,k\} \\
1\leq l\leq N-1}} P_{i,l}.$ Indeed, it follows from Lemma \ref{lemma} that 
$$1-\sigma(k+1) =\frac{d-k-1}{\lambda}>0.$$
On the other hand, we can also represent $\alpha_f$ as follows: let $i_1,i_2 \in S$ then
\begin{equation}\label{rep3}
\alpha_f=\sigma(\{\widetilde{D_{i_1}}\}+\{\widetilde{D_{i_2}}\})+(1-2\sigma)\{\widetilde{H}_{\Sigma_{\{k+1,\dots,d\}}}\}+\mathcal{E}'
\end{equation}
where $\mathcal{E}'$ is an effective divisor supported on $\displaystyle \bigcup_{\substack{ i\in\{0,\dots,k\} \\
1\leq l\leq N}} P_{i,l}$ and $\sigma$ is as above. Since the non-nef locus is contained in the intersection of the supports of the effective divisors in (\ref{rep1}),(\ref{rep2}) and (\ref{rep3}) we conclude that  
$$E_{nn}(\alpha_f) \subset \Sigma_{\{k+1,\dots,d\}}.$$
\indent If $k=d-1$ then we claim that $c_N=\frac{d-1}{d}.$ Indeed, by Lemma \ref{lemma}\ (2) $c_1=\frac{(d-\lambda)}{d}$ and by using Lemma \ref{lemma} (3) we get $\lambda^{N-1}=\frac{d-1}{d-\lambda}.$ Then by Lemma \ref{lemma}(1) we get $c_N=\lambda^{N-1}c_1=\frac{d-1}{d}.$ This implies that we can represent $\alpha_f$ as 
\begin{equation}\label{rep4}
\alpha_f=\frac{1}{d}\sum_{i=0}^{d-1}\{\widetilde{L_i}\}+\mathcal{E}
\end{equation}
where $L_i:=\{x\in \mathbb{P}^d: x_i-x_{d}=0\}$ and $\mathcal{E}$ is an effective divisor supported on $\displaystyle \bigcup_{\substack{0\leq i\leq d-1 \\ 1\leq l\leq N-1}} P_{i,l}.$ Now, for fixed $0\leq j\leq d-1 $ we also have
\begin{equation}\label{rep5} 
\alpha_f=\frac{1}{d} \sum^{d-1}_{\substack{ i=0 \\ i\not=j}}\{\widetilde{L_i}\}+\frac{1}{d}\{\widetilde{F_j}\}+\mathcal{E}'
\end{equation}
where $F_j:=\{x\in\mathbb{P}^d: x_j-ax_{d}=0\}$ is a 1-parameter family of hyperplanes and $\mathcal{E}'$ is an effective divisor supported on $\displaystyle \bigcup_{\substack{0\leq i \leq d-1 \\ 1\leq l\leq N-1}} P_{i,l}.$ 
Hence, by (\ref{rep1}), (\ref{rep4}) and (\ref{rep5}) we get
$$E_{nn}(\alpha_f) \subset \bigcup_{i=0}^{d-1}\Sigma_{\{i,d\}}.$$
\end{proof}
Now, we prove that a generic mapping of the form $f=L \circ J$ fall into framework of Theorem \ref{B}:
%\begin{thm}
%Let $f=L\circ J$ be as above  with $\lambda_1(f)>1$. If all the singular orbits of $f$ have length larger than 1 then $$\frac{1}{\lambda^n}v_{\alpha_f}^{min} \circ f \rightarrow 0$$ in $L^1(X).$
 %\end{thm} 
 \begin{proof}[Proof of Theorem \ref{B2}] 
 If $|S| \leq 1$ then the assertion follows from Proposition \ref{1} and Theorem~\ref{DG}.\\
We assume that $|S|\geq 2$ and set $S=\{0,\dots,k\}.$ By Theorem \ref{green} and Corollary \ref{greens} it is enough to show that $\alpha_f \cdot C \geq 0$ for every algebraic irreducible curve $C\subset E_{f_X}^-$.  Since $f_X$ is biholomorphic near the exceptional fibers $P_{i,j}$'s, the indeterminacy locus is given by 
$$\displaystyle I_{f_X}=\bigcup_{|I|\geq 2}\widetilde{\Sigma_I}.$$
This implies that $f_X(I_{f_X})\subset \bigcup_{i=0}^d L(\widetilde{\Sigma_i})$ where $L(\Sigma_i)=\{x\in \Bbb{P}^d:a\cdot x-x_i=0\}$ and $a=[a_0:\dots:a_d].$ Since $p_{i,j}=[1:\dots:\frac{j(a_i-1)}{ja_i-(j-1)}:\dots :1]$ we infer that $$ \bigcup_{i=0}^d L(\Sigma_i) \cap \mathcal{O}_S=\{p_{0,1},p_{1,1},\dots, p_{k,1}\}.$$
Then for any algebraic irreducible curve $C\subset E_{f_X}^-$ by Lemma \ref{lemma}
\begin{eqnarray*}
\alpha \cdot C &\geq& \deg C- \sum_{i=0}^k c_{i,1} (mult_{p_{i,1}}C) \\
&\geq& \deg C(1- (d-\lambda)) \\ 
& \geq & 0
\end{eqnarray*}
where the last inequality follows from Theorem \ref{BK}.
 \end{proof}
 
 \begin{thm}
Let $f:\Bbb{P}^3 \dashrightarrow \Bbb{P}^3$ be as above. If $|S|\leq 3$ and $2 \leq N:=N_{i}$ for every $i \in S$ then there exists a birational model $\mu:Y \rightarrow \mathbb{P}^3$ such that $f_Y :Y\dashrightarrow Y$ is a dominant 1-regular map with $\lambda:=\lambda_1(f_Y)$ is the unique simple eigenvalue of modulus greater than 1 with the corresponding normalized eigenvector $\widetilde{\alpha_f} \in H^{1,1}_{nef}(Y,\mathbb{R})$. 
 \end{thm}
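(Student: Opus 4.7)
The case $|S|\le 1$ is immediate from Proposition~\ref{1} with $Y=X$, $\mu=\pi$, $\widetilde{\alpha_f}=\alpha_f$, so the substantive content lies in $|S|\in\{2,3\}$. In these cases Proposition~\ref{2} identifies the non-nef locus of $\alpha_f$ on $X$ as a finite union $\mathcal{L}$ of lines inside $\{x_3=0\}$ (one line if $|S|=2$, the triangle $\bigcup_{i=0}^{2}\Sigma_{\{i,3\}}$ if $|S|=3$). The plan is to construct $\mu:Y\to\mathbb{P}^3$ by blowing up $\mathbb{P}^3$ in a different order than in the construction of $X$: \emph{first} blow up along the lines $\mathcal{L}$, which absorbs the orbit endpoints $e_0,\dots,e_k$ (all lying on $\mathcal{L}$) into ruled exceptional surfaces $E_\ell$; \emph{then} successively blow up the strict transforms of the remaining orbit points $p_{i,j}$ for $1\le j\le N_i-1$, together with any further centers dictated by the 1-regularity criterion of Theorem~\ref{t}(iv). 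The resulting $Y$ does not dominate $X$, so its Perron eigenspace is genuinely different from a pullback of $\alpha_f$.

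Three properties must then be verified on $Y$. First, 1-regularity of $f_Y$ via Theorem~\ref{t}(iv): the exceptional hypersurfaces are the strict transforms of the $\Sigma_i$ (whose orbits match those in $X$) together with the new ruled surfaces $E_\ell$, and a local calculation for $f=L\circ J$ transverse to $\ell\subset I_f$ shows that $f_Y|_{E_\ell}$ extends to a morphism whose image is a line in $\mathbb{P}^3$; the additional blow-ups are chosen to keep this line out of the iterated indeterminacy. Second, birational invariance of dynamical degrees gives $\lambda_1(f_Y)=\lambda$, and the characteristic polynomial of $f_Y^*$ factors with respect to the decomposition into the pulled-back and the new exceptional classes as $\chi_{f_X^*}(x)\cdot Q(x)$, with $Q$ carrying only eigenvalues of modulus $\le 1$ arising from the finite orbits of the new exceptional divisors, so $\lambda$ remains the unique simple eigenvalue of modulus greater than $1$. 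Third, for nefness of $\widetilde{\alpha_f}$, one lifts to $Y$ the effective representations of $\alpha_f$ from the proof of Proposition~\ref{2} (strict transforms of hyperplanes of the forms $H_l$, $D_i$, $L_i$, $F_j$). Their common support on $X$ is contained in $\mathcal{L}$, but on $Y$ the lines $\mathcal{L}$ have been blown up into full divisors, so these representatives acquire empty pairwise common intersection; hence $\nu(\widetilde{\alpha_f},y)=0$ for every $y\in Y$, and Proposition~\ref{Bou}(i) yields $\widetilde{\alpha_f}\in H^{1,1}_{nef}(Y,\mathbb{R})$.

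The principal obstacle is the detailed specification of the additional blow-ups beyond $\mathcal{L}$ needed to secure 1-regularity, and the bookkeeping showing that the action on the new exceptional classes contributes only eigenvalues of modulus at most $1$. A seemingly simpler alternative, namely blowing up $X$ along $\mathcal{L}$ via some morphism $\sigma:\widetilde{Y}\to X$, does not work: since $\sigma$ is a morphism one has $f_{\widetilde{Y}}^*\sigma^*=\sigma^*f_X^*$, so $\sigma^*\alpha_f$ is the unique $\lambda$-eigenvector on $\widetilde{Y}$, and the projection formula yields $\sigma^*\alpha_f\cdot s=\alpha_f\cdot\ell<0$ for any section $s$ of the new exceptional over a non-nef line $\ell\in\mathcal{L}$. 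The construction is therefore forced to bypass $X$ and build $Y$ directly from $\mathbb{P}^3$; establishing that this reordered sequence of blow-ups terminates, delivers a 1-regular $f_Y$, and produces effective representatives of $\widetilde{\alpha_f}$ with empty common support constitutes the main technical content of the argument.
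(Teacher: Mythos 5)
Your argument contains a fundamental error that causes the whole plan to go astray. The claim that for a birational morphism $\sigma:\widetilde{Y}\to X$ one automatically has $f_{\widetilde{Y}}^*\sigma^*=\sigma^*f_X^*$ is false when $f_X$ is merely rational. Pull-back of a rational map on cohomology does \emph{not} commute with pull-back along a further modification; this non-commutation is precisely what makes 1-regularization by blow-up possible in the first place. Concretely, the paper blows up $X$ along $\Sigma_{23}$ with projection $\mu:Y\to X$, exceptional $\mathcal{F}$, and computes $f_Y^*(H_Y)=3H_Y-2P_{0,N}-2P_{1,N}-\mathcal{F}$, whereas $\mu^*f_X^*(H_X)=3H_Y-2P_{0,N}-2P_{1,N}$. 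These differ by $-\mathcal{F}$, so $\mu^*\alpha_f$ is not the $f_Y^*$-eigenvector; the eigenvector is $\widetilde{\alpha_f}=H_Y-c\cdot E-\frac{1}{\lambda}\mathcal{F}$. Your projection-formula objection to this construction therefore does not apply, because the class you intersect with a section of $\mathcal{F}$ is not the right eigenvector.

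As a consequence, the ``seemingly simpler alternative'' you dismiss is exactly the paper's construction: blow up $X$ along the non-nef locus (one line when $|S|=2$, the three disjoint lines when $|S|=3$), check 1-regularity of $f_Y$ via Theorem~\ref{t}(iv) with a short local-coordinate computation showing $\overline{f_Y(\mathcal{F}\setminus I_{f_Y})}$ is not exceptional, observe that the characteristic polynomial becomes $x\chi(x)$ so $\lambda$ remains the unique simple eigenvalue of modulus $>1$, and show $\widetilde{\alpha_f}$ is nef by lifting the effective representations (\ref{rep1})--(\ref{rep3}) and using $1-2\sigma=\frac{1}{\lambda}$. Your alternative route (reordering blow-ups of $\mathbb{P}^3$, omitting the endpoints $e_i$ as separate centers, and adding unspecified further centers for 1-regularity) is not only unnecessary but is also left unexecuted: the bookkeeping of the new exceptional classes, the termination of the blow-up sequence, the 1-regularity verification, and the eigenvalue localization are all deferred as ``the main technical content,'' so the plan does not constitute a proof.
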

\begin{proof}
If $|S| \leq 1$ then the assertion follows from Theorem~\ref{BK} and Proposition \ref{1}.\\
We assume that $S=\{0,1\}$ then we define the complex manifold $Y$ to be $X$ blown up along $E_{nn}(\alpha_f)=\Sigma_{23}$ which is a complex line. We denote the projection by $\mu:Y \rightarrow X$ and the exceptional divisor by $\mathcal{F}:=\mu^{-1}(E_{nn}(\alpha_f))$.\\
We first show that the induced map $f_Y:Y\dashrightarrow Y$ is 1-regular: Notice that the only exceptional hypersurfaces of $f_Y$ are $\widetilde{\Sigma}_i$ for $i\not\in S$ and $\mathcal{F}$. Since $f_Y^n(\widetilde{\Sigma}_i-I_{f^n})\not\subset I_{f_Y}$ for $n \geq 1$ and $i\not\in S$ by Theorem \ref{t} it's enough to check that $f_Y^n(\mathcal{F}) \not\subset I_{f_Y}$ for every $n\in \mathbb{N}.$\\ \indent
 We claim that $\overline{f_Y(\mathcal{F}\backslash I_{f_Y})}=\widetilde{L(\Sigma_S)}.$ Indeed, we write $f_Y$ in the local coordinates: $(\eta_1,\eta_2,s)\in Y$ where $\mathcal{F}=\{s=0\}$ and
 $$\pi_Y:Y\to \Bbb{P}^3$$ 
$$\mu(\eta_1,\eta_2,s)=[1:\eta_1:\eta_2s :s]$$
Then, we may identify 
$$f_Y(\eta_1,\eta_2,0)=\eta_1 [a_2:a_2:a_2-1:a_2]+ \eta_1\eta_2 [a_3:a_3:a_3:a_3-1]$$ 
which proves the claim. Since the points $[a_2:a_2:a_2-1:a_2]$ and $[a_3:a_3:a_3:a_3-1]$ have non-singular orbits we conclude that $f_Y$ is 1-regular. Similarly, one can show that $\overline{f^{-1}_Y(\mathcal{F}\backslash I_{f^{-1}_Y})}=\widetilde{J(span\{[a_0-1:a_0:a_0:a_0],[a_1:a_1-1:a_1:a_1]\})}$ where the later set has codimension 2.\\ \indent
Now, $\{H_Y,P_{0,1},P_{0,2},..,P_{1,N},\mathcal{F}\}$ forms an ordered basis for $H^{1,1}(X,\mathbb{R})$ where $H_Y:=\mu^*(H_X)$ and $P_{i,l}:=\mu^*(P_{i,l})$ for each $i,$ $1\leq l\leq N$ and the action of $f_Y^*:H^{1,1}(Y)\rightarrow H^{1,1}(Y)$ is given by 
\begin{eqnarray*}
 f_Y^*(H_Y)&=&3H_X-2P_{0,N}-2P_{1,N}-\mathcal{F} 
 \\
  f_Y^*(P_{i,l+1})&=&P_{i,l}\ \text{for}  \ 1\leq l \leq  N-1\ \text{and}\ i\in S
  \\
   f_Y^*(P_{i,1})&=&\widetilde{\Sigma_{i}},\ \text{for}\  i\in S
   \\
   f_Y^*(\mathcal{F})&=&0
 \end{eqnarray*}
 where $\widetilde{\Sigma_{i}}\subset Y$ denotes the strict transform of $\Sigma_i \subset \mathbb{P}^3.$ Thus, the characteristic polynomial of $f_Y^*$ is given by $p(x)=x\chi(x)$ where $\chi(x)$ is as in Theorem \ref{BK}. This implies that $\lambda=\lambda_1(f_Y)$ is a simple eigenvalue. Moreover, corresponding eigenvector $\widetilde{\alpha_f}$ is of the form 
 $$\widetilde{\alpha_f}=H_Y-c\cdot E-\frac{1}{\lambda}\mathcal{F}$$
 where c and E are as in Lemma \ref{lemma}. \\
 Now, we claim that $\widetilde{\alpha_f}$ is nef. Indeed, it follows from Lemma \ref{lemma} that 
 $$1-2\sigma=\frac{1}{\lambda}$$
and by the representations (\ref{rep1}), (\ref{rep2}) and (\ref{rep3}) we infer that $\nu(T^{min}_{\widetilde{\alpha_f}},y)=0$ for every $y\in Y.$ Hence, the claim follows. \\  \indent
If $|S|=3$ then by Proposition \ref{2} $E_{nn}(\alpha_f)$ has $3$ components which are pairwise disjoint complex lines in $X$. In this case, we define the complex manifold $Y$ to be $X$ blown up along each component of $E_{nn}(\alpha_f)$ successively and apply the above analysis to drive the assertion. We omit the details of this part.
\end{proof}

 \bibliographystyle{alpha}
\bibliography{biblio}
\end{document}